\title{Higher genus theory}
\newcommand{\Q}{\mathbb{Q}}
\newcommand{\bmt}{\begin{pmatrix}}
\newcommand{\emt}{\end{pmatrix}}
\newcommand{\bsm}{\left(\begin{smallmatrix}}
\newcommand{\esm}{\end{smallmatrix}\right)}
\theoremstyle{definition}
\newtheorem{definition}{Definition}[section]
\newtheorem{remark}[definition]{Remark}
\theoremstyle{plain}
\newtheorem{proposition}[definition]{Proposition}
\newtheorem{lemma}[definition]{Lemma}
\newtheorem{theorem}[definition]{Theorem}
\theoremstyle{remark}
\numberwithin{equation}{section}
\renewcommand{\phi}{\varphi}
\newcounter{nootje}
\newcommand{\beq}{\begin{equation}}
\newcommand{\eeq}{\end{equation}}
\newcommand{\beqs}{\begin{equation*}}
\newcommand{\eeqs}{\end{equation*}}
\title{\vspace{-\baselineskip}\sffamily\bfseries Higher Genus Theory}
\author[1]{Peter Koymans\thanks{Vivatsgasse 7, 53111  Bonn, Germany, koymans@mpim-bonn.mpg.de}}
\author[1]{Carlo Pagano\thanks{Vivatsgasse 7, 53111  Bonn, Germany, carlein90@gmail.com}}
\affil[1]{Max Planck Institute for Mathematics, Bonn}
\date{\today}
\begin{document}
\maketitle
	
\begin{abstract}
In $1801$, Gauss found an explicit description, in the language of binary quadratic forms, for the $2$-torsion of the narrow class group and dual narrow class group of a quadratic number field. This is now known as Gauss's genus theory. In this paper we extend Gauss's work to the setting of multi-quadratic number fields. To this end, we introduce and parametrize the categories of expansion groups and expansion Lie algebras, giving an explicit description for the universal objects of these categories. This description is inspired by the ideas of Smith \cite{smith2} in his recent breakthrough on Goldfeld's conjecture and the Cohen--Lenstra conjectures.

Our main result shows that the maximal unramified multi-quadratic extension $L$ of a multi-quadratic number field $K$ can be reconstructed from the set of generalized governing expansions supported in the set of primes that ramify in $K$. This provides a recursive description for the group $\text{Gal}(L/\mathbb{Q})$ and a systematic procedure to construct the field $L$. A special case of our main result gives an upper bound for the size of $\text{Cl}^{+}(K)[2]$.
\end{abstract}

\section{Introduction}
The narrow class group of a number field $K$, which we denote by $\text{Cl}^{+}(K)$, is one of the most fundamental and, yet mysterious, objects in arithmetic. Its study, initiated by Gauss \cite{Gauss} in the language of binary quadratic forms, has triggered a substantial part of the developments of algebraic number theory since $1801$. In his dissertation Gauss reported what still is one of the very few explicit results about the class group. Namely, for a quadratic number field $K$, Gauss provided an explicit description for the $2$-torsion of $\text{Cl}^{+}(K)$ and of $\text{Cl}^{+}(K)^{\vee}$, the dual of the narrow class group. This description is given in terms of the primes dividing the discriminant $\Delta_{K/\mathbb{Q}}$. In particular from such a description Gauss was able to conclude that
$$
\text{dim}_{\mathbb{F}_2} \text{Cl}^{+}(K)[2] = \omega(\Delta_{K/\mathbb{Q}})-1
$$
for a quadratic number field $K$. Here $\omega(m)$ denotes the number of distinct prime factors of an integer $m$. It is not difficult to generalize this work to the case where $K$ is a cyclic prime degree extension of $\mathbb{Q}$. For that case, Gauss's work provides an explicit description for the $\text{Gal}(K/\mathbb{Q})$-invariants of $\text{Cl}^{+}(K)$ and $\text{Cl}^{+}(K)^{\vee}$. This is now known as Gauss's \emph{genus theory}. For a historical overview and a function field version, see respectively the work of Lemmermeyer \cite{Lemmermeyer} and Cornelissen \cite{Cornelissen}.

When no such explicit description is available, subsequent research has attempted to show that the class group behaves as ``randomly" as possible as $K$ varies in the family of quadratic number fields (or more general families). A precise notion of randomness was proposed by Cohen and Lenstra \cite{cohen--lenstra} in their conjectures for the class group. These conjectures were later refined by Gerth \cite{gerth} in order to include the case of the $2$-Sylow of the class group. Here the issue was precisely to isolate the ``random part" from the ``explicit part", that is $\text{Cl}^{+}(K)[2]$, where Gauss's genus theory applies. 

Spectacular progress on these conjectures has recently been made by Smith \cite{smith2}, where he proved Gerth's extension of the Cohen--Lenstra conjectures for $\text{Cl}^{+}(K)[2^{\infty}]$. A crucial ingredient of Smith's work is the notion of a \emph{governing expansion}. These are rather explicit objects that naturally provide elements of $\text{Cl}^{+}(K)^{\vee}[2]$ when $K$ is a \emph{multi-quadratic} number field. This brings us to the main topic of this paper.\\

$\mathbf{Questions:}$ Let $K$ be a multi-quadratic number field: \\
$(1)$ Is there a description of $\text{Cl}^{+}(K)^{\vee}[2]$? \\
$(2)$ Do Smith's governing expansions provide a set of generators for such a space? \\
$(3)$ How large can such a space be in terms of the primes ramifying in $K$? \\

The present work provides an affirmative answer to the first two questions and answers the third question by means of an upper bound. We call the ensemble of such results \emph{higher genus theory}, a terminology that will be explained later in this introduction. We shall begin by explaining our first three main results, which address the third question. They form the crudest manifestation of higher genus theory. Recall that for a positive integer $m$ we denote by $\omega(m)$ the number of distinct prime factors of $m$. 

\begin{theorem} 
\label{theoremA}
Let $n$ be a positive integer and let $a_1, \dots, a_n$ be square-free numbers in $\mathbb{Z}_{\geq 2}$ that are pairwise coprime and that have only prime factors congruent to $1$ modulo $4$. Then
$$
\emph{dim}_{\mathbb{F}_2} \emph{Cl}^{+}(\mathbb{Q}(\sqrt{a_1}, \dots, \sqrt{a_n}))[2] \leq \omega(a_1 \cdot \ldots \cdot a_n) \cdot 2^{n - 1} - 2^n + 1. 
$$
\end{theorem}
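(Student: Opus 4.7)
The plan is to bound $\dim_{\mathbb{F}_2}\text{Cl}^+(K)[2]$ by a sum over the $2^n-1$ quadratic subfields of $K$, and then compute that sum using Gauss's classical genus theory. For each non-empty $I\subseteq\{1,\dots,n\}$, set $a_I:=\prod_{i\in I}a_i$ and $F_I:=\mathbb{Q}(\sqrt{a_I})$. Pairwise coprimality makes $a_I$ squarefree with $\omega(a_I)=\sum_{i\in I}\omega(a_i)$, and since every prime factor is $\equiv 1\pmod 4$ we have $a_I\equiv 1\pmod 4$ and $\Delta_{F_I}=a_I$; so Gauss's theorem (quoted in the introduction) gives $\dim_{\mathbb{F}_2}\text{Cl}^+(F_I)[2]=\omega(a_I)-1$. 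Because each prime divisor of $a_1\cdots a_n$ lies in exactly $2^{n-1}$ non-empty subsets,
\[
\sum_{\emptyset\neq I}\bigl(\omega(a_I)-1\bigr) \;=\; \omega(a_1\cdots a_n)\cdot 2^{n-1}-(2^n-1),
\]
which is precisely the right-hand side of the theorem. Thus the proof reduces to the inequality $\dim_{\mathbb{F}_2}\text{Cl}^+(K)[2]\leq\sum_{\emptyset\neq I}\dim_{\mathbb{F}_2}\text{Cl}^+(F_I)[2]$.

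To establish this inequality I would work on the dual side via Kummer theory. Elements of $\text{Cl}^+(K)^{\vee}[2]$ correspond to classes $\alpha\in K^{\ast}/K^{\ast 2}$ satisfying (i) $v_{\mathfrak p}(\alpha)\in 2\mathbb{Z}$ at every finite prime $\mathfrak p$ of $K$, and (ii) $\alpha$ totally positive, modulo the totally positive units of $K$. The hypothesis $a_i\equiv 1\pmod 4$ makes $2$ unramified in $K/\mathbb{Q}$, so the local condition at primes above $2$ is already captured by (i). For each non-empty $I$ I would construct a linear map $\Phi_I\colon\text{Cl}^+(K)^{\vee}[2]\to\text{Cl}^+(F_I)^{\vee}[2]$ by corestriction from $\text{Gal}(\overline K/K)$ to $\text{Gal}(\overline K/F_I)$ on the Galois-cohomology side (equivalently a symmetrized norm on the Kummer side), and check that the output satisfies the local conditions defining $\text{Cl}^+(F_I)^{\vee}[2]$. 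Assembling these yields
\[
\Phi\colon\text{Cl}^+(K)^{\vee}[2]\;\longrightarrow\;\bigoplus_{\emptyset\neq I}\text{Cl}^+(F_I)^{\vee}[2].
\]

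The main obstacle is showing $\Phi$ is injective. The kernel consists of unramified totally-split quadratic characters of $\text{Gal}(\overline K/K)$ whose corestriction to every $\text{Gal}(\overline K/F_I)$ is trivial, and one must show any such character is trivial. This is a Galois-descent statement for the action of $G=\text{Gal}(K/\mathbb{Q})\cong(\mathbb{Z}/2)^n$ on the Selmer-type group defined by (i)--(ii); it depends crucially on $2$ being unramified in $K/\mathbb{Q}$ (so local-at-$2$ obstructions vanish) and on $K$ being totally real (so the archimedean conditions cleanly match). As a backup route, one can iterate a suitable form of the ambiguous class-number formula along the tower $\mathbb{Q}\subset\mathbb{Q}(\sqrt{a_1})\subset\cdots\subset K$, tracking at each stage the jump in $2$-rank in terms of the primes of $K_{j-1}$ newly ramifying in $K_j$; here the $\equiv 1\pmod 4$ hypothesis is exactly what is needed to make the unit correction terms controllable and to arrange that the resulting telescoping bound equals $\omega(a_1\cdots a_n)\cdot 2^{n-1}-2^n+1$.
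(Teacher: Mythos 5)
Your reduction of the right-hand side to the sum $\sum_{\emptyset \neq I}(\omega(a_I)-1)$ over the quadratic subfields is correct and suggestive numerology (the paper's graded bound, Theorem \ref{controlling gn}, produces exactly the partial sums $\sum_{\#I = j}(\omega(a_I)-1)$ at each nilpotency level $j$), but the proof has a genuine gap exactly where you flag ``the main obstacle'': the injectivity of $\Phi$. Dualizing, injectivity of the assembled corestriction map is equivalent to the assertion that $\text{Cl}^{+}(K)/2\text{Cl}^{+}(K)$ is generated by the images of the conorm (ideal-extension) maps from the $2^n-1$ quadratic subfields. Nothing in your sketch establishes this, and it is not a routine descent or local-conditions statement: it carries essentially the full weight of the theorem. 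If it were provable by soft means, Theorem \ref{theoremA} would follow from Gauss's genus theory alone; instead the paper's proof never relates $\text{Cl}^{+}(K)$ to class groups of subfields at all. It shows that $\text{Gal}(H_2^{+}(a_1,\dots,a_n)/\mathbb{Q})$, equipped with a choice of inertia generators, is a $[(k_1,\dots,k_n)]$-expansion group (Proposition \ref{Galois groups of H2(a1,.., An) is expansion}) and then bounds the order of \emph{every} such abstract group by $2^{(k_1+\cdots+k_n)2^{n-1}-2^n+n+1}$ via the purely combinatorial identity $\widetilde{\text{Cons}}=\widetilde{\text{Gov}}$ (Proposition \ref{Consistent tensors are governing tensors more general} and Theorem \ref{governing group is universal tilde}).

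Concretely, here is why the injectivity step should worry you. The conorm image $j_I(\text{Cl}^{+}(F_I))$ modulo squares is controlled by a full set of generators of $\text{Cl}^{+}(F_I)$, whereas Gauss describes $\text{Cl}^{+}(F_I)[2]$, not $\text{Cl}^{+}(F_I)/2\text{Cl}^{+}(F_I)$ (these differ whenever the $2$-Sylow is not elementary abelian); and whether the class of a ramified prime of $F_I$ remains nontrivial modulo squares after extension to $K$ is precisely the sort of question governed by R\'edei symbols and governing fields, i.e.\ invisible to genus theory. For $n \geq 3$ there is the further problem that corestriction to a quadratic subfield factors through several ``cornering'' operations, each of which drops the nilpotency degree, so characters of intermediate nilpotency degree are in danger of dying before they reach any $F_I$. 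Your backup route fares no better: the ambiguous class number formula for a quadratic step $L/F$ controls $\text{rk}_2\,\text{Cl}^{+}(L)^{\text{Gal}(L/F)}$, but passing from the rank of the fixed submodule to the rank of the whole module can lose a factor of $2$ (consider $(\mathbb{Z}/2)^{2k}$ with the involution swapping coordinates in pairs), so iterating along the tower $\mathbb{Q}\subset\cdots\subset K$ yields a bound exponentially weaker than the stated one unless one inputs exactly the Galois-module structure that the expansion-group formalism is built to supply.
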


To the best of our knowledge, this is the first non-trivial upper bound for the group $\text{Cl}^{+}(\mathbb{Q}(\sqrt{a_1}, \dots, \sqrt{a_n}))[2]$ appearing in the literature. A trivial upper bound is $\ll_\epsilon \Delta_{K/\Q}^{1/2 + \epsilon}$ by the Brauer--Siegel theorem. If $n = 1$, we see that our upper bound is in concordance with Gauss's genus theory. In contrast, for $n \geq 2$ one can show that merely prescribing the values of $\omega(a_1), \dots, \omega(a_n)$ does not force $\text{dim}_{\mathbb{F}_2} \text{Cl}^{+}(\mathbb{Q}(\sqrt{a_1}, \dots, \sqrt{a_n}))[2]$ to attain a unique value. We hope to show in future work that the upper bound in Theorem \ref{theoremA} is actually sharp in a wide number of cases. 

We say that a vector $(a_1, \dots, a_n)$ is \emph{acceptable} if $a_1, \dots, a_n$ are square-free numbers in $\mathbb{Z}_{\geq 2}$ that are pairwise coprime and have only prime factors congruent to $1$ modulo $4$. This condition can be partly relaxed in all our theorems. It is possible to remove the restriction on the prime factors of $a_i$ with only minor modifications and some case distinctions, but the coprimality condition among the $a_i$ may be more difficult to remove.

We say that an acceptable vector is \emph{maximal} if the bound of Theorem \ref{theoremA} is attained. Our next result provides the following neat recursive characterization of maximal vectors. For a number field $K$, we denote by $H_2^{+}(K)$ the maximal elementary exponent $2$ extension of $K$ that is unramified at all finite places; this is the field corresponding to $\text{Cl}^{+}(K)^{\vee}[2]$ by class field theory. For a positive integer $m$ we denote by $[m]$ the set of positive integers no larger than $m$. 
 
\begin{theorem}
\label{theoremC}
Let $n$ be a positive integer and let $(a_1, \dots, a_n)$ be an acceptable vector. Then the following are equivalent. \\
$(a)$ The vector $(a_1, \dots, a_n)$ is maximal, i.e. 
$$
\emph{dim}_{\mathbb{F}_2} \emph{Cl}^{+}(\mathbb{Q}(\sqrt{a_1}, \dots, \sqrt{a_n}))[2] = \omega(a_1 \cdot \ldots \cdot a_n) \cdot 2^{n - 1}-2^n+1.
$$
$(b)$ For every $j \in [n]$, the vector $(a_h)_{h \neq j}$ is maximal and every prime divisor $p$ of $a_j$ splits completely in $H_2^{+}(\mathbb{Q}(\{\sqrt{a_m}\}_{m \in [n]-\{j\}}))$. \\
$(c)$ For every $j \in [n]$, the vector $(a_h)_{h \neq j}$ is maximal and for every prime divisor $p$ of $a_j$, one (or equivalently any) prime above $p$ in the field $\mathbb{Q}(\{\sqrt{a_m}\}_{m \in [n]-\{j\}})$ belongs to  $2\emph{Cl}^{+}(\mathbb{Q}(\{\sqrt{a_m}\}_{m \in [n]-\{j\}}))$.  
\end{theorem}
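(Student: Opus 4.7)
The plan is to proceed by induction on $n$, fixing an index $j \in [n]$ and viewing $K = \mathbb{Q}(\sqrt{a_1}, \dots, \sqrt{a_n})$ as the quadratic extension $K_j(\sqrt{a_j})$ of $K_j := \mathbb{Q}(\{\sqrt{a_m}\}_{m \in [n]-\{j\}})$. The two main tools are class field theory applied to $H_2^+(K_j)/K_j$, which will drive (b) $\Leftrightarrow$ (c), and the 2-torsion refinement of Chevalley's ambiguous class number formula applied to the extension $K/K_j$, which will drive (a) $\Leftrightarrow$ (b). The base case $n = 1$ is immediate from Gauss's classical genus theory, where maximality is automatic and the subvector conditions are vacuous.

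For (b) $\Leftrightarrow$ (c), the key point is that $H_2^+(K_j)/K_j$ corresponds under Artin reciprocity to $\text{Cl}^+(K_j)/2\text{Cl}^+(K_j)$: for any prime $\mathfrak{p}$ of $K_j$ above a $p \mid a_j$ (which is unramified in $K_j/\mathbb{Q}$ since the $a_i$ are pairwise coprime and supported on primes $\equiv 1 \pmod 4$), the Frobenius at $\mathfrak{p}$ in $\text{Gal}(H_2^+(K_j)/K_j)$ is $[\mathfrak{p}] \bmod 2\text{Cl}^+(K_j)$. Hence complete splitting of $\mathfrak{p}$ in $H_2^+(K_j)/K_j$ is equivalent to $[\mathfrak{p}] \in 2\text{Cl}^+(K_j)$, which is precisely (c); the ``equivalently any'' phrasing is consistent with this, because $\text{Gal}(K_j/\mathbb{Q})$ permutes the primes above $p$ while preserving the subgroup of squares.

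For (a) $\Leftrightarrow$ (b), I apply the 2-rank form of Chevalley's ambiguous class number formula to $K/K_j$. Since $a_j$ is totally positive, no real place of $K_j$ ramifies in $K/K_j$, and the finite ramified primes of $K_j$ are exactly those above the $p \mid a_j$. Setting $d(F) := \dim_{\mathbb{F}_2} \text{Cl}^+(F)[2]$ and writing $g_p$ for the splitting number of $p$ in $K_j$, Chevalley's formula (together with a standard $G$-cohomological argument for the units) produces an inequality of the form $d(K) \leq 2 d(K_j) + \sum_{p \mid a_j} g_p - 1 - u$, where $u \geq 0$ is a unit-index correction arising from $[\mathcal{O}_{K_j}^{\times, +} : \mathcal{O}_{K_j}^{\times, +} \cap N_{K/K_j}(K^\times)]$. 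Using $g_p \leq 2^{n-1}$ (with equality iff $p$ splits completely in $K_j$) together with the inductive upper bound on $d(K_j)$, one recovers Theorem A. Equality throughout then forces simultaneously (i) $(a_h)_{h \neq j}$ maximal; (ii) each $p \mid a_j$ splits completely in $K_j$; and (iii) every ramified prime of $K_j$ capitulates in $K$, which by the principal genus theorem for $K/K_j$ is equivalent to its class in $K_j$ lying in $2\text{Cl}^+(K_j)$. Conditions (i)--(iii) combined with the equivalence (b) $\Leftrightarrow$ (c) of the previous paragraph yield (b).

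The main obstacle will be the precise refinement of Chevalley's formula to the $\mathbb{F}_2$-rank of $\text{Cl}^+$ (rather than just the cardinality of ambiguous classes) in the totally real, narrow-class, multi-quadratic setting; in particular, showing that the unit correction $u$ vanishes exactly when conditions (i)--(iii) hold requires a delicate analysis of the totally positive units of $K_j$ and their behavior under the norm from $K$. A second subtlety is executing the capitulation / principal genus step cleanly enough to convert the Chevalley equality condition into the squareness criterion in (c). The expansion-group framework developed in the earlier sections of the paper is very likely the clean organizational device that manages this recursive structure across all the $2^n - 1$ choices of subvector uniformly.
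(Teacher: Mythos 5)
Your paragraph establishing (b) $\Leftrightarrow$ (c) is fine and matches the paper, which simply notes that this equivalence follows directly from class field theory. The gap is in (a) $\Leftrightarrow$ (b). The inequality on which your whole induction rests,
$$
d(K) \;\leq\; 2\,d(K_j) + \sum_{p \mid a_j} g_p - 1 - u,
$$
is not a consequence of Chevalley's ambiguous class number formula, and the equality analysis you extract from it cannot produce condition (b). Two concrete problems. First, Chevalley's formula computes the \emph{order} of $\text{Cl}^{+}(K)^{G}$ in terms of the full class number $h^{+}(K_j)$, the local ramification indices, and a unit--norm index; it gives no direct comparison between $\dim_{\mathbb{F}_2}\text{Cl}^{+}(K)[2]$ and $\dim_{\mathbb{F}_2}\text{Cl}^{+}(K_j)[2]$. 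Passing from the ambiguous part to the full $2$-torsion requires controlling $\text{Cl}^{+}(K)[2]$ as a module over $\mathbb{F}_2[\text{Gal}(K/\mathbb{Q})]$ --- over all of $(\mathbb{Z}/2)^n$, not just the single involution $\text{Gal}(K/K_j)$ --- and that control is exactly what the paper's expansion Lie algebra machinery (Propositions \ref{Consistent tensors are governing tensors more general} and \ref{expansion bracket give tilde consistent tensors}) is built to supply; one quadratic layer of Chevalley at a time does not yield the sharp constant. Second, and more fatally for the equality analysis: the right-hand side of Chevalley's formula depends only on the number of ramified places and on $[\mathcal{O}_{K_j}^{\times,+}:\mathcal{O}_{K_j}^{\times,+}\cap N_{K/K_j}K^{\times}]$; it is completely insensitive to the ideal classes $[\mathfrak{p}]$ of the ramified primes in $\text{Cl}^{+}(K_j)$. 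So ``equality in Chevalley'' can never force your condition (iii), namely $[\mathfrak{p}]\in 2\,\text{Cl}^{+}(K_j)$ --- and that is precisely the content of (b) beyond mere complete splitting in $K_j$. (Your identification of (iii) with capitulation of $\mathfrak{p}$ in $K$ via the principal genus theorem is also off: capitulation in $K$ and membership in $2\,\text{Cl}^{+}(K_j)$ are different conditions.)

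The paper's proof runs along entirely different lines: by Theorem \ref{governing group is universal tilde} and Proposition \ref{Galois groups of H2(a1,.., An) is expansion}, maximality of $(a_1,\dots,a_n)$ is equivalent to the solvability of every governing-expansion embedding problem with support $\{\chi_{a_i}\}_{i\in A}\cup\{\chi_b\}$ and pointer $\chi_b$ for $b\mid a_{i_0}$, realized by an extension unramified at the finite places. The obstruction to each such problem is the explicit $2$-cocycle on the right-hand side of equation (\ref{eSmith}); using Proposition \ref{quadratically consistent}, its local triviality at a prime $p\mid a_j$ is shown to be exactly the condition that $p$ split completely in the relevant subfield of $H_2^{+}((a_h)_{h\neq j})$, and global realizability by an unramified representative is supplied by an adaptation of \cite[Proposition 4.10]{koymans--pagano}. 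If you want to salvage a relative-quadratic induction, you would need, at a minimum, a \emph{proved} $2$-rank genus formula for $K/K_j$ whose defect term sees the classes of the ramified primes; already for $n=2$ that is Fr\"ohlich's and R\'edei's theory, i.e.\ the analysis of $\text{Gn}(K,2)$, not the ambiguous class number formula.
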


We remark that condition $(b)$ is emptily satisfied when $n=1$, so that Theorem \ref{theoremA} and Theorem \ref{theoremC} together recover the usual Gauss's genus theory. We also observe that the equivalence between conditions $(b)$ and $(c)$ in Theorem \ref{theoremC} follows directly from class field theory. Hence the non-trivial assertion is the equivalence between condition $(a)$ and $(b)$ (or equivalently between $(a)$ and $(c)$). 

Fr\"ohlich \cite{frolich} systematically investigated a certain subspace of $\text{Cl}^{+}(K)^{\vee}[2]$ for multi-quadratic fields. To explain which subspace, we recall that elements of $\text{Cl}^{+}(K)^{\vee}[2]$ have a natural notion of complexity, leading to a filtration
$$
\{0\} = \text{Gn}(K,0) \subseteq \text{Gn}(K,1) \subseteq \text{Gn}(K, 2) \subseteq \dots \subseteq \text{Gn}(K, j) \subseteq \cdots,
$$
exhausting the space $\text{Cl}^{+}(K)^{\vee}[2]$. Intuitively, the index $j$ in the filtration measures the extent to which the corresponding Galois groups over $\mathbb{Q}$ are non-commutative. The precise definition is as follows. For $j \in \mathbb{Z}_{\geq 1}$, a character $\chi \in \text{Cl}^{+}(K)^{\vee}[2]$ belongs to the $j$-th term of the filtration if it vanishes on all $(j+1)$-th nested commutators\footnote{Observe that this is well defined since $K/\mathbb{Q}$ is abelian and $j \geq 1$.} with entries in $G_{\mathbb{Q}}$. Here we view the character $\chi$ as a homomorphism $G_K \rightarrow \mathbb{F}_2$ by class field theory, where $G_K$ denotes the absolute Galois group of a number field $K$. If an element is in the $j$-th but not in the $(j - 1)$-th term of the filtration, then we say that it has nilpotency $j$.

The goal of higher genus theory is to describe all the spaces $\text{Gn}(K, j)$. Gauss's genus theory handles the case $j=1$. The work of Fr\"ohlich extends this to $j=2$. The present paper handles all $j \in \mathbb{Z}_{\geq 1}$. 

The jump from nilpotency $j \in \{1, 2\}$ to nilpotency $j \in \mathbb{Z}_{\geq 1}$ has a parallel in the recent dramatic developments on the Cohen--Lenstra conjectures: this is no coincidence. Using the work of R\'edei \cite{Redei}, it is possible to prove that the $4$-rank and the $8$-rank of class groups of quadratic number fields follow the Cohen--Lenstra conjectures. For the former, see the work of Fouvry--Kl\"uners \cite{FK1, fouvry--kluners} and for the latter see Smith's work \cite{smith1} under GRH predating his major breakthrough \cite{smith2}. There has also been recent interest in non-abelian generalizations of the Cohen--Lenstra conjectures studied from a heuristical standpoint by \cite{BBH, BW, LWZ, WW} and from a statistical viewpoint by \cite{klys}. 

For the $4$-rank, Gauss's genus theory is enough to obtain R\'edei's criterion employed by Fouvry and Kl\"uners. For the $8$-rank, R\'edei found a criterion in terms of certain extensions of nilpotency class $2$, which we shall label \emph{R\'edei fields}; for a modern exposition on R\'edei fields and their connection with the $8$-rank, see the work of Stevenhagen \cite{stevenhagen}. On the one hand, R\'edei fields are precisely the fields that Fr\"ohlich uses to analyze $\text{Gn}(K,1)$. On the other hand Smith's notion of governing expansions provides a generalization of R\'edei fields for any nilpotency class. As we shall now explain, the present work completes this picture and shows that these fields do indeed provide a complete description for $\text{Gn}(K, j)$ for all $j$. 

From now on we shall use the notation $H_2^{+}(a_1, \dots, a_n):=H_2^{+}(\mathbb{Q}(\sqrt{a_1}, \dots, \sqrt{a_n}))$ for an acceptable vector $(a_1, \dots, a_n)$. Let $(a_1, \dots, a_n)$ be an acceptable vector and denote $k_i:=\omega(a_i)$. We abstract the most fundamental features of $\text{Gal}(H_2^{+}(a_1, \dots, a_n)/\mathbb{Q})$ in the notion of a $[(k_1, \dots, k_n)]$\emph{-expansion group} in Section \ref{Expansion algebraic}. This is an algebraic structure, consisting of a group with certain extra data. 

Our crucial step is to show that the Lie algebra attached, by means of the descending central series, to a $[(k_1, \dots, k_n)]$-expansion group is a highly constrained one: this leads us to the notion of a $[(k_1, \dots, k_n)]$\emph{-expansion Lie algebra}. We use these constraints to show that the dimension of a $[(k_1, \dots, k_n)]$-expansion Lie algebra is always bounded by 
$$
(k_1+ \dots + k_n) \cdot 2^{n - 1} - 2^n + 1 + n.
$$
This is obtained by bounding the dimension of certain tensor spaces encoding all the constraints shared by a $[(k_1, \dots, k_n)]$-expansion Lie algebra. This calculation is done in Section \ref{Auxiliary tensor spaces}. Owing to this step we deduce the same bound for a $[(k_1, \dots, k_n)]$-expansion group. Already at this stage we are able to establish the following inequality.

\begin{theorem} 
\label{controlling gn} 
Let $(a_1, \dots, a_n)$ be an acceptable vector. Then
$$
\emph{dim}_{\mathbb{F}_2} \frac{\emph{Gn}(\mathbb{Q}(\sqrt{a_1}, \dots, \sqrt{a_n}), j)}{\emph{Gn}(\mathbb{Q}(\sqrt{a_1}, \dots, \sqrt{a_n}), j - 1)} \leq \omega(a_1 \cdot \ldots \cdot a_n) \cdot \binom{n - 1}{j - 1} - \binom{n}{j},
$$
for all $j \in \mathbb{Z}_{\geq 1}$.
\end{theorem}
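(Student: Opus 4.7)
The plan is to transport the filtration $\text{Gn}(K,\bullet)$ to the lower central filtration of $G := \text{Gal}(H_2^+(K)/\mathbb{Q})$ and then to deduce the theorem from a graded refinement of the dimension bound for expansion Lie algebras outlined in Section \ref{Auxiliary tensor spaces}.

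Set $K := \mathbb{Q}(\sqrt{a_1}, \dots, \sqrt{a_n})$, $G := \text{Gal}(H_2^+(K)/\mathbb{Q})$ and $H := \text{Gal}(H_2^+(K)/K)$, so that $G/H \simeq (\mathbb{Z}/2\mathbb{Z})^n$ and class field theory supplies a canonical identification $\text{Hom}(H, \mathbb{F}_2) \simeq \text{Cl}^+(K)^\vee[2]$. Following the construction in Section \ref{Expansion algebraic}, the group $G$ inherits a $[(k_1, \dots, k_n)]$-expansion group structure with $k_i := \omega(a_i)$. Writing $G^{(1)} := G$, $G^{(j+1)} := [G, G^{(j)}]$ for the lower central series and $\mathfrak{g}_j := G^{(j)}/G^{(j+1)}$, the associated graded Lie algebra $\mathfrak{g} = \bigoplus_{j \geq 1} \mathfrak{g}_j$ is then a $[(k_1, \dots, k_n)]$-expansion Lie algebra. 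Note that, because $G/H$ is abelian, one has $G^{(j)} \subseteq H$ for every $j \geq 2$.

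By definition, a character $\chi : H \to \mathbb{F}_2$ lies in $\text{Gn}(K, j)$ exactly when it vanishes on every $(j+1)$-fold nested commutator of elements of $G_\mathbb{Q}$, equivalently when the induced map on $G$ factors through $H/G^{(j+1)}$. Combining this with the short exact sequence $0 \to H/G^{(2)} \to \mathfrak{g}_1 \to (\mathbb{Z}/2\mathbb{Z})^n \to 0$ at the bottom of the filtration, and with the identifications $\mathfrak{g}_j = G^{(j)}/G^{(j+1)}$ for $j \geq 2$, one obtains
\[
\dim_{\mathbb{F}_2} \frac{\text{Gn}(K, j)}{\text{Gn}(K, j - 1)}
= \begin{cases} \dim_{\mathbb{F}_2} \mathfrak{g}_1 - n, & j = 1, \\ \dim_{\mathbb{F}_2} \mathfrak{g}_j, & j \geq 2. \end{cases}
\]
It therefore suffices to show that every $[(k_1, \dots, k_n)]$-expansion Lie algebra satisfies the graded bound
\[
\dim_{\mathbb{F}_2} \mathfrak{g}_j \leq (k_1 + \dots + k_n) \binom{n - 1}{j - 1} - \binom{n}{j} + n \cdot \delta_{j, 1}
\]
for every $j \geq 1$, where $\delta_{j,1}$ is the Kronecker delta. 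Summing over $j$ recovers exactly the total bound $(k_1 + \dots + k_n) \cdot 2^{n - 1} - 2^n + 1 + n$ announced before the theorem, so the graded estimate loses no information overall.

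The main obstacle is proving this graded refinement. The tensor spaces introduced in Section \ref{Auxiliary tensor spaces} to encode the relations cutting out a $[(k_1, \dots, k_n)]$-expansion Lie algebra are naturally graded by commutator degree; I would verify that each such relation is homogeneous with respect to this grading, and then count dimensions piece by piece. The target quantities $\binom{n-1}{j-1}$ and $\binom{n}{j}$ should emerge as the dimensions of the degree-$j$ components of the free part and the relation part, respectively, which in turn reflects the fact that the free nilpotent Lie algebra on $n$ degree-one generators has Hilbert series with corresponding binomial combinatorics. The delicate point is precisely this bookkeeping: any relation that inadvertently mixed commutator degrees would still yield the telescoped total bound but would destroy the degree-by-degree inequality required for Theorem \ref{controlling gn}.
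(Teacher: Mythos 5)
Your reduction is correct and follows essentially the same route as the paper: identify $\text{Gn}(K,j)/\text{Gn}(K,j-1)$ with the dual of the $j$-th graded piece of the lower central series of the $[(k_1,\dots,k_n)]$-expansion group $\text{Gal}(H_2^+(K)/\mathbb{Q})$ (Proposition \ref{Galois groups of H2(a1,.., An) is expansion}) and invoke a degree-by-degree dimension bound for expansion Lie algebras. The ``graded refinement'' you flag as the main obstacle is in fact already the form in which the paper states its results --- Proposition \ref{expansion bracket give tilde consistent tensors} embeds $L_j^{\vee}$ into $\widetilde{\text{Cons}}(V_{[(k_1,\dots,k_n)]},j)$ for each fixed $j$, and Propositions \ref{Consistent tensors are governing tensors more general} and \ref{counting more} compute that space's dimension as $(k_1+\dots+k_n)\binom{n-1}{j-1}-\binom{n}{j}$ --- so no homogeneity bookkeeping remains to be done.
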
 

Theorem \ref{controlling gn} sharpens the conclusion of Theorem \ref{theoremA}. To upgrade these inequalities to a full description of all $[(k_1, \dots, k_n)]$-expansion groups, so in particular of the Galois group $\text{Gal}(H_2^{+}(a_1, \dots, a_n)/\mathbb{Q})$, we proceed as follows. We use an abstracted version of Smith's governing expansions to construct a $[(k_1, \dots, k_n)]$-expansion group and a $[(k_1, \dots, k_n)]$-expansion Lie algebra of the maximal possible size $2^m$ where $m$ equals
$$
(k_1+ \dots + k_n) \cdot 2^{n - 1} - 2^n + 1 + n. 
$$
Part of this data is a pair $(\widetilde{\mathcal{G}}([(k_1, \dots, k_n)]),(g_1, \dots, g_{k_1+ \dots + k_n}))$, where $\widetilde{\mathcal{G}}([(k_1, \dots, k_n)])$ is a certain finite $2$-group and $\{g_1, \dots, g_{k_1+ \dots + k_n}\}$ is a set of generating involutions of  $\widetilde{\mathcal{G}}([(k_1, \dots, k_n)])$. A similar construction is carried out in the case of Lie algebras. We shall refer to these two structures as the \emph{governing group} and \emph{governing algebra}.

In parallel, we first show that in these two categories the set of morphisms between two objects always has at most $1$ element, and a morphism is always a surjective group or Lie algebra homomorphism. Second we show that there exists a \emph{universal} object, namely one that maps (uniquely and surjectively) to any other of them: this fact is proved in a \emph{soft} manner in two different ways. In particular these soft arguments give no clue on the shape of this universal object and no a priori control on its size, apart from the above mentioned upper bound. 

But the governing group and the governing algebra reach precisely that upper bound. Therefore we obtain the non-trivial conclusion that the governing group and the governing Lie algebra must be the universal objects among $[(k_1, \dots, k_n)]$-expansion groups and Lie algebras respectively. Altogether, this culminates in the following considerable refinement of Theorem \ref{theoremA}. 

\begin{theorem} 
\label{Presenting the group}
Let $(k_1, \dots, k_n)$ be in $\mathbb{Z}_{\geq 1}^{n}$. Let $(a_1, \dots, a_n)$ be an acceptable vector with $\omega(a_i) = k_i$ for every $i \in [n]$. List the prime factors of $a_1 \cdot \ldots \cdot a_n$ as $\{p_1,  \dots, p_{k_1+ \dots + k_n}\}$ in such a way that the prime factors of $a_i$ are $\{p_{1 + \sum_{1 \leq j \leq i - 1}k_j}, \dots, p_{\sum_{1 \leq j \leq i} k_j}\}$ and for each $p_i$ make a choice $\sigma_{i}$ of an inertia element in $\emph{Gal}(H_2^{+}(a_1, \dots, a_n)/\mathbb{Q})$. 

Then the assignment $g_i \mapsto \sigma_i$ extends uniquely to a group epimorphism
$$
\phi: \widetilde{\mathcal{G}}([(k_1, \dots, k_n)]) \twoheadrightarrow \emph{Gal}(H_2^{+}(a_1, \dots, a_n)/\mathbb{Q}). 
$$
Furthermore, the map $\phi$ is an isomorphism if and only if
$$
\emph{dim}_{\mathbb{F}_2} \emph{Cl}^{+}(\mathbb{Q}(\sqrt{a_1}, \dots, \sqrt{a_n}))[2] = \omega(a_1 \cdot \ldots \cdot a_n) \cdot 2^{n - 1} - 2^n + 1.
$$
\end{theorem}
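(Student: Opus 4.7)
The plan exploits the structural results of Sections \ref{Expansion algebraic} and \ref{Auxiliary tensor spaces}: the category of $[(k_1, \dots, k_n)]$-expansion groups has been arranged so that (a) the Galois group $G := \text{Gal}(H_2^{+}(a_1, \dots, a_n)/\mathbb{Q})$, with its inertia data, is an object; (b) every morphism in the category is surjective and uniquely determined by its effect on the distinguished generators; and (c) the governing group $\widetilde{\mathcal{G}}([(k_1, \dots, k_n)])$ is universal, as a consequence of its size saturating the upper bound of Theorem \ref{controlling gn}. The proof is then a synthesis of these three facts, closed off by an order comparison for the iff statement.

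The first and most substantial step is to verify that the pair $(G, (\sigma_1, \dots, \sigma_{k_1 + \dots + k_n}))$, together with the natural projection $G \twoheadrightarrow \text{Gal}(K/\mathbb{Q}) \cong \mathbb{F}_2^n$ where $K := \mathbb{Q}(\sqrt{a_1}, \dots, \sqrt{a_n})$, really is an object of the category defined in Section \ref{Expansion algebraic}. Because the $a_i$ are pairwise coprime and every prime factor is $\equiv 1 \pmod 4$, the only primes ramifying in $K/\mathbb{Q}$ are the $p_i$'s, and each $p_i$ ramifies only in the single quadratic subfield $\mathbb{Q}(\sqrt{a_{j(i)}})$, where $j(i)$ is the unique index with $p_i \mid a_{j(i)}$. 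Since $H_2^{+}(K)/K$ is unramified at all finite places, the inertia group at any chosen prime above $p_i$ in $G$ projects isomorphically onto the order-$2$ inertia subgroup of $\text{Gal}(K/\mathbb{Q})$; hence each $\sigma_i$ is an involution with the prescribed image in $\mathbb{F}_2^n$. The $\sigma_i$ generate $G$ because their joint fixed field in $H_2^{+}(K)$ would be an everywhere-unramified extension of $\mathbb{Q}$, forcing it to equal $\mathbb{Q}$. The higher-nilpotency axioms -- the commutator/Kummer-theoretic relations that pin down the category -- follow from local reciprocity at each $p_i \equiv 1 \pmod 4$, exactly as in Smith's governing-expansion formalism, which the axioms of Section \ref{Expansion algebraic} were designed to abstract.

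Once the object structure is in place, the epimorphism $\phi$ drops out for free: by universality there is a unique morphism $\widetilde{\mathcal{G}}([(k_1, \dots, k_n)]) \to G$, it is surjective by the general fact that every morphism in the category is, and it must send each distinguished generator $g_i$ to the corresponding distinguished generator $\sigma_i$ of $G$ by the way morphisms are determined in the category. The iff statement then reduces to comparing $|\widetilde{\mathcal{G}}([(k_1, \dots, k_n)])| = 2^m$ with
\[
|G| = 2^n \cdot |\text{Cl}^{+}(K)[2]| = 2^{n + \dim_{\mathbb{F}_2} \text{Cl}^{+}(K)[2]},
\]
the first equality being class field theory. A surjection of finite groups is an isomorphism iff the orders agree, and rearranging gives exactly the formula in the theorem statement.

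The main obstacle is the first stage: identifying $(G, (\sigma_i), \dots)$ as an abstract $[(k_1, \dots, k_n)]$-expansion group. The axioms encode genuinely arithmetic content -- the Kummer and reciprocity constraints Smith isolated -- so checking them requires a local analysis at each $p_i$ and a careful translation of how commutators of inertia elements in $G$ realise the symbols built into the definition. Everything downstream -- the categorical invocation of universality and the closing order count -- is routine given the machinery already assembled in the earlier sections.
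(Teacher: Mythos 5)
Your architecture is the paper's: exhibit $\text{Gal}(H_2^{+}(a_1,\dots,a_n)/\mathbb{Q})$ with the chosen inertia elements as a $[(k_1,\dots,k_n)]$-expansion group, invoke the universality of $\widetilde{\mathcal{G}}([(k_1,\dots,k_n)])$ from Theorem \ref{governing group is universal tilde}, and close with the order count $\#G = 2^{\,n+\dim_{\mathbb{F}_2}\text{Cl}^{+}(K)[2]}$, which is exactly right. But there is a concrete error in how you set up the object. The structure map of a $[(k_1,\dots,k_n)]$-expansion group is a homomorphism onto $V_{[k_1+\dots+k_n]} \cong \mathbb{F}_2^{k_1+\dots+k_n}$ sending $g_i$ to $e_i$, not the projection $G \twoheadrightarrow \text{Gal}(K/\mathbb{Q}) \cong \mathbb{F}_2^{n}$ that you propose. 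With your choice the axioms fail as soon as some $k_i>1$: there are $k_1+\dots+k_n$ distinguished generators that must map to a basis of the target, and axiom $(3)$ breaks because the kernel of your map is $\text{Gal}(H_2^{+}(K)/K)$, which strictly contains the commutator subgroup. The correct map is the tuple $(\chi_{p_1},\dots,\chi_{p_{k_1+\dots+k_n}})$, i.e.\ restriction to $\text{Gal}(\mathbb{Q}(\sqrt{p_1},\dots,\sqrt{p_{k_1+\dots+k_n}})/\mathbb{Q})$; verifying axiom $(3)$ for it is the one genuinely arithmetic point, namely that the maximal abelian subextension of $H_2^{+}(a_1,\dots,a_n)/\mathbb{Q}$ is exactly $\mathbb{Q}(\sqrt{p_1},\dots,\sqrt{p_{k_1+\dots+k_n}})$, which uses $p_j \equiv 1 \pmod 4$ and classical genus theory. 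The $[(k_1,\dots,k_n)]$-refinement is then just the statement that $\text{Gal}(H_2^{+}(K)/K)$ is an $\mathbb{F}_2$-vector space, which holds by the definition of $H_2^{+}$.

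Relatedly, your account of where the difficulty lies is off. Definition \ref{definition of (k1, ... ,kn)-expansion groups} contains no higher-nilpotency or reciprocity conditions to check: the only requirements are that the $\sigma_i$ are involutions forming a dual basis of the characters, that the two relevant kernels are $\mathbb{F}_2$-vector spaces, and the identification of the commutator subgroup above. No local reciprocity, no analysis of commutators of inertia, and no Kummer-theoretic symbol computation enters at this stage; all of that content was front-loaded into the abstract tensor-space computation (Proposition \ref{Consistent tensors are governing tensors more general}) which is what makes $\widetilde{\mathcal{G}}$ universal among objects satisfying these soft axioms. Once the object structure is corrected, the remainder of your argument --- uniqueness and automatic surjectivity of the morphism, and the order comparison for the if-and-only-if --- coincides with the paper's proof.
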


Our final result provides a \emph{recursive description} of each of the spaces 
$$
\text{Gn}(\mathbb{Q}(\sqrt{a_1}, \dots, \sqrt{a_n}),j),
$$
for each $(a_1, \dots ,a_n)$ and $j \in \mathbb{Z}_{\geq 1}$. In particular it gives a substantial generalization of Theorem \ref{theoremC}. Let $(a_1, \dots, a_n)$ be an acceptable vector. For each $T \subseteq [n]$ denote by $K_T$ the field $\mathbb{Q}((\sqrt{a_h})_{h \in T})$. Let $j$ be in $\mathbb{Z}_{\geq 1}$. Then the material of Section \ref{Reconstructing from corners} gives, for each $T \subseteq [n]$ and $j \in [|T|]$, a certain finite dimensional vector space
$$
\Phi_j(a_{h})_{h \in T} \subseteq \text{Cont-Map}(G_{\mathbb{Q}},\mathbb{F}_2),
$$
where $\text{Cont-Map}(G_{\mathbb{Q}},\mathbb{F}_2)$ denotes the space of continuous $1$-cochains from $G_{\mathbb{Q}}$ to $\mathbb{F}_2$.\footnote{These spaces will be reinterpreted, via Shapiro's Lemma, as certain $1$-cocycles in $\mathbb{F}_2[\mathbb{F}_2^{n}]$.} The space $\Phi_j(a_{h})_{h \in T}$ has the property that the restriction to $G_{K_T}$ yields a surjective homomorphism
$$
\Phi_j(a_{h})_{h \in T} \twoheadrightarrow \text{Gn}(K_T,j).
$$
Furthermore, the spaces $\Phi_j(a_h)_{h \in T}$, as $j$ and $T$ vary, are linked together with the following additional data. Namely, whenever $j \geq 2$ and $i \in T$, it turns out that one has a natural map
$$
P_i:\Phi_j(a_h)_{h \in T} \to \Phi_{j-1}(a_h)_{h \in T-\{i\}},
$$
which, after applying restriction to $G_{K_T}$ and $G_{K_{T-\{i\}}}$ respectively, becomes the natural norm map between character groups. The operators $P_i$ commute and this allows us to define an operator $P_S$ for each $S \subseteq [n]$. These operators reduce the complexity of a map in two ways: they lower the nilpotency degree and the degree of the multi-quadratic fields. However, despite each of the operators $P_S$ individually \emph{simplifies} a map $\Phi$, when considered altogether they encode the map $\Phi$ with the following universal recipe. Namely one has the key equation
\begin{align}
\label{Univ.exp eq.}
(d\Phi)(\sigma,\tau)=\sum_{\emptyset \neq S \subseteq [n]}\chi_{S}(\sigma)P_S(\Phi)(\tau).
\end{align}
Here $\chi_{S}=\prod_{i \in S}\chi_{a_i}$, where the product is done in $\mathbb{F}_2$.\footnote{Here $\chi_{a}$ denotes the quadratic character corresponding to the extension $\mathbb{Q}(\sqrt{a})/\mathbb{Q}$.} Observe that equation (\ref{Univ.exp eq.}) determines the coset of $\Phi$ with respect to the space spanned by the set of characters $\{\chi_p\}_{p|a_1 \cdot \ldots \cdot a_n}$. 

Equation (\ref{Univ.exp eq.}) is a \emph{universal version} of Smith's governing expansion equation (see \cite[eq. (2.2)]{smith2}), which is a special case. Incidentally, we provide an alternative way to think about the notion of a governing expansion. Namely we show that to give a governing expansion is tantamount to giving an epimorphism from $G_{\mathbb{Q}}$ to the group
$$
\mathbb{F}_2[\mathbb{F}_2^m] \rtimes \mathbb{F}_2^m.
$$
This dictionary is established in Section \ref{expansion maps as coordinates of monomials}, which provides the direct link between governing expansions and expansion groups. 

Our final theorem provides an inverse to equation (\ref{Univ.exp eq.}), which allows us to construct the space $\Phi_{j}(a_1, \dots, a_n)$ out of the (lower complexity) spaces $(\Phi_{j-1}((a_h)_{h \neq i}))_{i \in [n]}$, for each $j \in \mathbb{Z}_{\geq 2}$. We define $\text{Comm-Vect}_{j}(a_1, \dots, a_n)$ to be the set of vectors $(\Phi_1, \dots, \Phi_n)$ with $\Phi_i \in \Phi_{j-1}((a_h)_{h \neq i})$ and with the property that 
$$
P_i(\Phi_k) = P_k(\Phi_i),
$$ 
for each distinct $i, k \in [n]$. Then one writes down
$$
\theta((\Phi_i)_{i \in [n]}) := \sum_{i \in [n]} \chi_{\{i\}}(\sigma)\Phi_i(\tau) + \sum_{\substack{B \subseteq [n] \\ \#B \geq 2}} \chi_{B}(\sigma) P_B((\Phi_i)_{1 \leq i \leq n})(\tau),
$$
where we can unambiguously define $P_B((\Phi_i)_{1 \leq i \leq n})$ as the composition of the maps $P_i$ as $i$ varies in $B$. From equation (\ref{Univ.exp eq.}) it follows that $\theta((\Phi_i)_{i \in [n]})$ is a $2$-\emph{cocycle}. Denote by $\text{Comm-Vect}_{j}^{\circ}(a_1, \dots, a_n)$ the subspace of $\text{Comm-Vect}_{j}(a_1, \dots, a_n)$ yielding trivial classes in $H^2(G_{\mathbb{Q}},\mathbb{F}_2)$. We have the following. 

\begin{theorem} 
\label{Last thm}
For every $(\Phi_1, \dots, \Phi_n)$ in $\emph{Comm-Vect}_{j}^{\circ}(a_1, \dots, a_n)$ there exists $\Phi$ inside $\Phi_j(a_1, \dots, a_n)$ such that 
$$d\Phi=\theta((\Phi_i)_{i \in [n]}).
$$ 
Furthermore such a $\Phi$ must satisfy also 
$$(P_1(\Phi), \dots, P_n(\Phi))=(\Phi_1, \dots, \Phi_n).
$$
Conversely, for each $\Phi \in \Phi_j(a_1, \dots, a_n)$, we have that 
$$d\Phi=\theta(P_i(\Phi)_{i \in [n]}),
$$ 
and so $(P_1(\Phi), \dots, P_n(\Phi)) \in \emph{Comm-Vect}_{j}^{\circ}(a_1, \dots, a_n)$. 
\end{theorem}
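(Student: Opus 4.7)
The plan is to dispose of the converse direction as a near-tautology, and then to attack the forward direction by first producing some cochain $\Phi$ with the prescribed coboundary and only afterwards verifying that $\Phi$ sits inside the required subspace $\Phi_j(a_1,\dots,a_n)$ and satisfies the norm identities $P_i(\Phi)=\Phi_i$.

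For the converse direction, suppose $\Phi \in \Phi_j(a_1,\dots,a_n)$. By equation (\ref{Univ.exp eq.}) we have $d\Phi(\sigma,\tau)=\sum_{\emptyset\neq S\subseteq[n]}\chi_S(\sigma)P_S(\Phi)(\tau)$. Because the operators $P_i$ commute, $P_S(\Phi)$ agrees with $P_S$ applied to the tuple $(P_1(\Phi),\dots,P_n(\Phi))$, and the right-hand side is precisely $\theta((P_i(\Phi))_{i\in[n]})(\sigma,\tau)$. This identifies $\theta((P_i(\Phi))_i)$ as the coboundary $d\Phi$, hence its class in $H^2(G_\mathbb{Q},\mathbb{F}_2)$ is trivial, while the commuting relations $P_i(P_k(\Phi))=P_k(P_i(\Phi))$ built into the definition of \emph{Comm-Vect}$_j$ are automatic. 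So $(P_1(\Phi),\dots,P_n(\Phi))\in\emph{Comm-Vect}_j^\circ(a_1,\dots,a_n)$.

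For the forward direction, given $(\Phi_1,\dots,\Phi_n)\in\emph{Comm-Vect}_j^\circ(a_1,\dots,a_n)$, triviality of $[\theta((\Phi_i))]$ in $H^2(G_\mathbb{Q},\mathbb{F}_2)$ produces a continuous $1$-cochain $\Phi'$ with $d\Phi'=\theta((\Phi_i))$. Note that for $\sigma\in G_{K_{[n]}}$ every character $\chi_S$ with $\emptyset\neq S\subseteq[n]$ vanishes, so $d\Phi'|_{G_{K_{[n]}}\times G_{K_{[n]}}}=0$ and $\Phi'|_{G_{K_{[n]}}}$ is automatically a homomorphism $G_{K_{[n]}}\to\mathbb{F}_2$. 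The remaining freedom in $\Phi'$ is addition of a homomorphism $G_\mathbb{Q}\to\mathbb{F}_2$, i.e.\ of a quadratic character of $\mathbb{Q}$; using this freedom together with the inductive hypothesis that each $\Phi_i$ lies in $\Phi_{j-1}((a_h)_{h\neq i})$, one should be able to arrange that $\Phi'|_{G_{K_{[n]}}}$ is unramified at all finite places and of nilpotency at most $j$, placing the adjusted cochain $\Phi$ inside $\Phi_j(a_1,\dots,a_n)$. Once this is achieved, equation (\ref{Univ.exp eq.}) applied to $\Phi$ together with $d\Phi=\theta((\Phi_i))$ gives the identity
\begin{equation*}
\sum_{\emptyset\neq S\subseteq[n]}\chi_S(\sigma)P_S(\Phi)(\tau)=\sum_{i}\chi_{\{i\}}(\sigma)\Phi_i(\tau)+\sum_{\#B\geq 2}\chi_B(\sigma)P_B((\Phi_i))(\tau).
\end{equation*}
The characters $\{\chi_S\}_{\emptyset\neq S\subseteq[n]}$ are $\mathbb{F}_2$-linearly independent as functions on $G_\mathbb{Q}$, since $\Gal(K_{[n]}/\mathbb{Q})\simeq\mathbb{F}_2^n$ has independent characters. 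Equating coefficients of $\chi_{\{i\}}(\sigma)$ forces $P_i(\Phi)(\tau)=\Phi_i(\tau)$ for every $\tau$, and the analogous equation for $\#S\geq 2$ is consistent, and serves as a sanity check, thanks to the commuting property of $(\Phi_i)$.

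The main obstacle is the membership $\Phi\in\Phi_j(a_1,\dots,a_n)$: one must promote an abstract $1$-cochain with the right coboundary to one whose restriction to $G_{K_{[n]}}$ corresponds, via class field theory, to an unramified character sitting at the correct stage of the nilpotency filtration. This rests on fine control of the recursive construction of the spaces $\Phi_j$ carried out in Section \ref{Reconstructing from corners}; crucially, the norm-compatibility $P_i(\Phi_k)=P_k(\Phi_i)$ encoded by \emph{Comm-Vect}$_j$ is exactly what guarantees that $\theta((\Phi_i))$ already has the shape of the right-hand side of equation (\ref{Univ.exp eq.}), so that passing from $\Phi'$ to an element of $\Phi_j(a_1,\dots,a_n)$ is possible at all.
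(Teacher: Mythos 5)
Your converse direction and your extraction of the identities $P_i(\Phi)=\Phi_i$ (by plugging in inertia/linear independence of the $\chi_S$) are correct and essentially match the paper's use of equation (\ref{Univ.exp eq.}) and Proposition \ref{commuting vectors are realizable}. However, the forward direction has a genuine gap exactly where you write that ``one should be able to arrange'' that the adjusted cochain lands in $\Phi_j(a_1,\dots,a_n)$. The freedom of adding a quadratic character of $\mathbb{Q}$ is not by itself enough to make $L(\Phi)/K_{[n]}$ unramified: twisting by a character can only remove ramification at a prime $p$ if the central extension cut out by $\theta((\Phi_i)_i)$ is already split over the inertia group at $p$. That local splitting is the actual content of the paper's proof, and your proposal never establishes it.

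The paper's argument is the following local computation, which you are missing. Fix a prime $p\mid a_i$ and an inertia element $\sigma_p$ in $\mathrm{Gal}(L(\underline{v})/\mathbb{Q})$. In the sum of $2^n-1$ terms defining $\theta(\underline{v})(\sigma_p,\sigma_p)$, every term vanishes: if $i\notin B$ then $\chi_B(\sigma_p)=0$ since $p$ ramifies only in $\mathbb{Q}(\sqrt{a_i})$; if $i\in B$ then $P_{B-\{i\}}(\Phi_i)$ has field of definition inside $L(\Phi_i)$, which is unramified at $p$ because $\Phi_i\in\Phi_{j-1}((a_h)_{h\neq i})$ involves only the primes dividing $\prod_{h\neq i}a_h$, so $\sigma_p$ maps to the identity there and $P_{B-\{i\}}(\Phi_i)$ vanishes on the identity. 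Hence $\theta(\underline{v})(\sigma_p,\sigma_p)=0$, so $\sigma_p$ lifts to an involution rather than an element of order $4$ in the central extension; since any ramification at the odd prime $p$ would be tame with cyclic inertia, the extension is unramified at $p$, and one can then choose the character twist (as in the cited Proposition 4.10 of \cite{koymans--pagano}) to produce $\Phi\in\Phi_j(a_1,\dots,a_n)$. The bound on the nilpotency degree, which you also assert without proof, is supplied by Proposition \ref{commuting vectors are realizable}. Without the inertia computation your argument does not close, because nothing in the mere triviality of $[\theta((\Phi_i)_i)]$ in $H^2(G_{\mathbb{Q}},\mathbb{F}_2)$ guarantees an unramified representative in the coset $\Phi'+\mathrm{Hom}(G_{\mathbb{Q}},\mathbb{F}_2)$.
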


Theorem \ref{Last thm} gives a procedure to access each of the spaces $\text{Gn}(K_{[n]}, j)$ inductively. Indeed, starting from the spaces $\Phi_{j-1}((a_h)_{h \in [n]-\{i\}})$, for each $i$ in $[n]$, one determines which $\underline{v} \in \text{Comm-Vect}_{j}(a_1, \dots, a_n)$ yield an unobstructed class $\theta(\underline{v}) \in H^2(G_{\mathbb{Q}},\mathbb{F}_2)$. This gives the space $\text{Comm-Vect}_{j}^{\circ}(a_1, \dots, a_n)$. Then Theorem \ref{Last thm} guarantees that each of the embedding problems in $\text{Comm-Vect}_{j}^{\circ}(a_1, \dots, a_n)$ admits as a solution an $1$-cochain that yields an element of $\Phi_j(a_1, \dots, a_n)$. Furthermore Theorem \ref{Last thm} tells us that all elements of $\Phi_j(a_1, \dots, a_n)$ arise in this way. Finally, restricting $\Phi_j(a_1, \dots, a_n)$ to $G_{K_{[n]}}$ yields $\text{Gn}(K_{[n]},j)$. 

The space $\Phi_1(a_1, \dots, a_n)$ simply consists of the span of the functions $\{\chi_{A}\}_{\emptyset \neq A \subseteq [n]}$ and all characters $\{\chi_p\}_{p \mid a_1 \cdot \ldots \cdot a_n}$. Hence this procedure gives a recursive description of $\text{Gn}(K_{[n]}, j)$ purely in terms of the arithmetic of the ground field $\mathbb{Q}$.

\section*{Acknowledgments}
This work owes an evident intellectual debt to Alexander Smith's work \cite{smith2}. We would also like to thank Alexander for several clarifying emails and conversations about his work. Especially for the one where he shared with us his insight that something along the line of the present work might be possible.

We are very grateful to Jared Asuncion for running several computer experiments that made visible the pattern of Theorem \ref{theoremA}. This greatly motivated us to find a proof and to develop a genus theory for every nilpotency class.    

We wish to thank Hendrik Lenstra for an insightful conversation predating this work, where he mentioned the group $\mathbb{F}_2[\mathbb{F}_2^n] \rtimes \mathbb{F}_2^n$ as an example of a group of exponent $4$ and large nilpotency class. These objects turned out to provide precisely the adequate language to understand governing expansions and to construct universal expansion groups.   

We thank Ren\'e Schoof for an e-mail on the case $j = 2$ of Theorem \ref{controlling gn}.

We thank Adam Morgan for suggesting a reformulation of the spaces $\Phi_j(a_{h})_{h \in T}$ in terms of $1$-cocycles in the group ring. This has simplified the proof of Theorem \ref{Last thm}.

The authors wish to thank the Max Planck Institute for Mathematics in Bonn for its financial support, great work conditions and an inspiring atmosphere. 

\section{Auxiliary tensor spaces} \label{Auxiliary tensor spaces}
In this section we define certain tensor spaces and carry out a crucial combinatorial calculation contained in Proposition \ref{Consistent tensors are governing tensors} and Proposition \ref{Consistent tensors are governing tensors more general}. This calculation forms the foundation of the algebraic material of Section \ref{Expansion algebraic} and therefore of our main arithmetical applications, which are given in Section \ref{Expansion arithmetic}. 

For any positive integer $m$ we recall that $[m]$ denotes the set of positive integers that are no bigger than $m$. For a set $S$, we denote by $V_{S}$ the vector space $\mathbb{F}_2^{(S)}$ of formal $\mathbb{F}_2$-linear combinations of the elements of $S$. In this way each element $s \in S$ gives a vector $e_s \in V_{S}$ and the collection $\{e_s\}_{s \in S}$ gives a basis for $V_{S}$. We denote by $\{\chi_s\}_{s \in S}$ the unique linear functional from $V_S$ to $\mathbb{F}_2$ defined by the equation $\chi_s(e_{s'})=\delta_{s,s'}$ for each $s, s' \in S$. Let $n, i \in \mathbb{Z}_{\geq 0}$ and define 
$$
\text{Multi}(V_{[n]}, i) := \{b : V_{[n]}^i \to \mathbb{F}_2 \ | \ b \text{ multi-linear}\}.
$$
Observe that a basis for $\text{Multi}(V_{[n]}, i)$ is $\{\chi_{h_1} \otimes \dots \otimes \chi_{h_i}\}_{(h_1, \dots,h_i) \in [n]^i}$, where $\chi_{h_1} \otimes \dots \otimes \chi_{h_i}$ is short for the map $(\sigma_1, \dots, \sigma_i) \mapsto \chi_{h_1}(\sigma_1) \cdot \ldots \cdot \chi_{h_i}(\sigma_i)$. Here the product on the right hand side is the usual product in $\mathbb{F}_2$. If $i \geq 2$, then we put for every $A \subseteq [n]$ with $\#A = i$ and $x \in A$
\begin{align}
\label{eGovTen}
\phi_{(A, x)} := \sum_{\substack{\tau \in \text{Isom}_{\text{Set}}([i], A) \\ \tau(i - 1)=x \ \text{or} \ \tau(i)=x}} \chi_{\tau(1)} \otimes \dots \otimes \chi_{\tau(i)} \in \text{Multi}(V_{[n]}, i).
\end{align}
If $i = 1$ and $A = \{x\}$, then we put $\phi_{(A, x)} := \chi_x$. We shall refer to such tensors as \emph{governing tensors}. We denote by 
$$
\text{Gov}(V_{[n]}, i) := \langle \{\phi_{(A, x)}\}_{A \subseteq [n], \#A = i, x \in A} \rangle.
$$
We have the following elementary fact.

\begin{proposition} 
\label{counting}
Let $i$ be in $\mathbb{Z}_{\geq 2}$. Then we have that
$$
\emph{dim}_{\mathbb{F}_2} \emph{Gov}(V_{[n]}, i) = (i - 1) \cdot \binom{n}{i}.
$$
\end{proposition}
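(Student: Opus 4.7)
My plan is to exploit a block decomposition of $\text{Gov}(V_{[n]}, i)$ indexed by the subsets $A \subseteq [n]$ of size $i$, and then reduce the claim to a one-line linear algebra computation inside each block.

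First I would observe that the basis of $\text{Multi}(V_{[n]}, i)$ can be partitioned according to the \emph{support} of a basic tensor: the tensor $\chi_{h_1} \otimes \cdots \otimes \chi_{h_i}$ has support $\{h_1, \dots, h_i\}$, and the definition of $\phi_{(A,x)}$ runs over bijections $\tau \colon [i] \to A$, so $\phi_{(A,x)}$ lies in the span of basis tensors of support exactly $A$. Since basis tensors of different supports are $\mathbb{F}_2$-linearly independent, this gives a direct sum decomposition
\[
\text{Gov}(V_{[n]}, i) = \bigoplus_{\substack{A \subseteq [n] \\ \#A = i}} \text{Gov}_A, \qquad \text{Gov}_A := \langle \phi_{(A,x)} : x \in A\rangle.
\]
The proposition then reduces to showing that $\dim_{\mathbb{F}_2} \text{Gov}_A = i - 1$ for every such $A$.

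Next I would fix $A$ and analyze the map $\mathbb{F}_2^A \to \text{Multi}(V_{[n]}, i)$ sending $(c_x)_{x \in A} \mapsto \sum_{x \in A} c_x \phi_{(A,x)}$. The key observation is that for any bijection $\tau \colon [i] \to A$, the basic tensor $\chi_{\tau(1)} \otimes \cdots \otimes \chi_{\tau(i)}$ appears in $\phi_{(A,x)}$ if and only if $x \in \{\tau(i-1), \tau(i)\}$; in particular it appears in \emph{exactly two} of the generators, indexed by the distinct elements $\tau(i-1)$ and $\tau(i)$. Consequently the coefficient of $\chi_{\tau(1)} \otimes \cdots \otimes \chi_{\tau(i)}$ in $\sum_x c_x \phi_{(A,x)}$ is $c_{\tau(i-1)} + c_{\tau(i)}$.

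Two consequences follow at once. Taking $(c_x) = (1, \dots, 1)$ gives $\sum_{x \in A} \phi_{(A,x)} = 0$, so the map has nontrivial kernel and $\dim \text{Gov}_A \leq i-1$. Conversely, if $\sum_x c_x \phi_{(A,x)} = 0$, then $c_y + c_z = 0$ for every pair of distinct $y, z \in A$ (by choosing a $\tau$ with $\{\tau(i-1), \tau(i)\} = \{y, z\}$, which exists since $i \geq 2$), forcing all $c_x$ to coincide. Hence the kernel is exactly the one-dimensional line spanned by the all-ones vector, and $\dim \text{Gov}_A = \#A - 1 = i - 1$. Summing over the $\binom{n}{i}$ subsets $A$ yields the desired formula. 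The only mildly delicate point is the support-based direct sum decomposition; once that is in place, the linear algebra inside each block is routine and causes no obstacle.
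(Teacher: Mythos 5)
Your proof is correct and follows essentially the same route as the paper's: identify the single relation $\sum_{x \in A} \phi_{(A,x)} = 0$ within each support block $A$, and verify there are no further relations using that the basic tensors $\chi_{h_1} \otimes \cdots \otimes \chi_{h_i}$ form a basis of $\text{Multi}(V_{[n]}, i)$. Your explicit computation that the coefficient of $\chi_{\tau(1)} \otimes \cdots \otimes \chi_{\tau(i)}$ in $\sum_x c_x \phi_{(A,x)}$ equals $c_{\tau(i-1)} + c_{\tau(i)}$ just fills in the detail the paper dismisses as ``transparent.''
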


\begin{proof}
We can assume that $i \leq n$, otherwise both sides of the equality are $0$. For each $A \subseteq [n]$ with $\#A = i$ we deduce from equation (\ref{eGovTen}) the identity
$$
\sum_{x \in A} \phi_{(A, x)} = 0.
$$
From this it follows that 
$$
\text{dim}_{\mathbb{F}_2} \text{Gov}(V_{[n]}, i) \leq (i - 1) \cdot \binom{n}{i}.
$$
Since $\{\chi_{h_1} \otimes \dots \otimes \chi_{h_i}\}_{(h_1, \dots,h_i) \in [n]^i}$ is a basis for $\text{Multi}(V_{[n]}, i)$, it is transparent that there are no further relations among the elements of the set $\{\phi_{A,x}\}_{A \subseteq [n], \#A = i, x \in A}$. This gives the desired equality. 
\end{proof}

For $i \in \mathbb{Z}_{\geq 0}$, we introduce the subspace $\widetilde{\text{Multi}}(V_{[n]}, i)$ of $\text{Multi}(V_{[n]}, i)$, defined as
$$
\widetilde{\text{Multi}}(V_{[n]}, i):=\langle \{\chi_{\tau(1)} \otimes \dots \otimes \chi_{\tau(i)} \}_{\tau \in \text{Inj}([i], [n])} \rangle,
$$
where $\text{Inj}([i], [n])$ denotes the set of injective maps from $[i]$ to $[n]$. We can alternatively characterize $\widetilde{\text{Multi}}(V_{[n]}, i)$ as the space of multi-linear maps from $V^i$ to $\mathbb{F}_2$ that vanish if we put the same element $e_h$ in two distinct coordinates for some $h \in [n]$. Recall that for any subset $B$ of $[n]$, the symbol $V_B$ denotes the subspace of $V_{[n]}$ spanned by $\{e_j\}_{j \in B}$ and we put
$$\widetilde{\text{Multi}}(V_B, i):=\langle \{\chi_{\tau(1)} \otimes \dots \otimes \chi_{\tau(i)} \}_{\tau \in \text{Inj}([i], B)} \rangle.
$$
Observe that if $i \in \mathbb{Z}_{\geq 1}$ and $b \in \widetilde{\text{Multi}}(V_B, i)$, then $b$ can be reconstructed from the $\# B$ maps $(b(e_j, -))_{j \in B}$ with each $b(e_j, -) \in  \widetilde{\text{Multi}}(V_{B - \{j\}}, i - 1)$. Conversely, given any such collection of maps $(b_j)_{j \in B} \in \prod_{j \in B} \widetilde{\text{Multi}}(V_{B - \{j\}}, i - 1)$, there exists a unique $b$ in $\widetilde{\text{Multi}}(V_B, i)$ with $b(e_j, -)=b_j(-)$ for each $j \in B$. In other words we have a natural identification
$$
P(V_B, i) : \widetilde{\text{Multi}}(V_B, i) \to \prod_{j \in B} \widetilde{\text{Multi}}(V_{B - \{j\}}, i - 1),
$$
i.e. the map $P(V_B, i)$ is an isomorphism of vector spaces. The following calculation will often be helpful.

\begin{proposition} 
\label{P of governing tensors}
Let $i$ be in $\mathbb{Z}_{\geq 3}$ and $A \subseteq B \subseteq [n]$ with $\#A = i$. Let $x \in A$. Then 
$$
P(V_B, i)(\phi_{(A, x)}) = (0)_{j \not \in A} \times (\phi_{(A - \{j\}, x)})_{j \in A - \{x\}} \times (0)_{j = x}.
$$
\end{proposition}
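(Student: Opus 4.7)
The plan is to verify the formula coordinate by coordinate, by using the isomorphism $P(V_B, i)$ of the preceding discussion: an element of $\widetilde{\text{Multi}}(V_B, i)$ is determined by its restriction $b(e_j, -) \in \widetilde{\text{Multi}}(V_{B - \{j\}}, i - 1)$ for each $j \in B$, so it suffices to show that $\phi_{(A, x)}(e_j, -)$ equals $\phi_{(A - \{j\}, x)}$ when $j \in A - \{x\}$ and vanishes otherwise.

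First I would expand $\phi_{(A, x)}(e_j, -)$ using the definition. A basis tensor $\chi_{\tau(1)} \otimes \dots \otimes \chi_{\tau(i)}$ applied with $e_j$ in the first slot gives $\delta_{\tau(1), j}$ times $\chi_{\tau(2)} \otimes \dots \otimes \chi_{\tau(i)}$, so only bijections $\tau \colon [i] \to A$ with $\tau(1) = j$ survive. If $j \notin A$ no such $\tau$ exists and the restriction is $0$. If $j = x$, then $\tau(1) = x$, and since $i \geq 3$ the positions $1$, $i - 1$, $i$ are pairwise distinct, so the constraint $\tau(i - 1) = x$ or $\tau(i) = x$ forces $\tau$ to take the value $x$ at two different positions, contradicting injectivity; again the restriction is $0$.

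The heart of the argument is the remaining case $j \in A - \{x\}$. Here I would set up the shift bijection $\tau \leftrightarrow \widetilde{\tau}$ between $\{\tau \in \text{Isom}_{\text{Set}}([i], A) : \tau(1) = j\}$ and $\text{Isom}_{\text{Set}}([i - 1], A - \{j\})$ given by $\widetilde{\tau}(k) = \tau(k + 1)$. Under this shift the governing-tensor condition $\tau(i - 1) = x$ or $\tau(i) = x$ transforms into $\widetilde{\tau}((i - 1) - 1) = x$ or $\widetilde{\tau}(i - 1) = x$, which is precisely the condition appearing in the definition of $\phi_{(A - \{j\}, x)}$; note that this makes sense because $i - 1 \geq 2$. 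The surviving tensor $\chi_{\tau(2)} \otimes \dots \otimes \chi_{\tau(i)}$ is exactly $\chi_{\widetilde{\tau}(1)} \otimes \dots \otimes \chi_{\widetilde{\tau}(i - 1)}$, so summing over the relevant $\tau$ gives $\phi_{(A - \{j\}, x)}$ term by term.

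I do not expect any genuine obstacle: the proof is a bookkeeping exercise in index shifting, and the role of the hypothesis $i \geq 3$ is simply to ensure both that $x$ cannot occupy two of the three flagged positions simultaneously (settling the $j = x$ case) and that the formula $\phi_{(A - \{j\}, x)}$ falls under the general $i \geq 2$ definition rather than the $i = 1$ convention. The only mild care needed is in matching the two-slot governing condition before and after the shift, which is transparent once written out.
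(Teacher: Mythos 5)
Your argument is correct and is exactly the direct verification from the definition that the paper itself invokes (its proof reads simply ``This follows directly from the definition''). The coordinatewise computation, the index shift $\widetilde{\tau}(k)=\tau(k+1)$, and the two uses of $i\geq 3$ (ruling out $\tau(1)=x$ coinciding with a flagged position, and keeping $\phi_{(A-\{j\},x)}$ in the $i\geq 2$ regime) are all sound.
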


\begin{proof}
This follows directly from the definition and equation (\ref{eGovTen}).
\end{proof}

Let $B \subseteq [n]$ and $i \in \mathbb{Z}_{\geq 1}$. We are going to define a subspace
$$
\text{Cons}(V_B, i) \subseteq \widetilde{\text{Multi}}(V_B, i)
$$ 
that will be used in the arithmetical Section \ref{Expansion arithmetic} by means of the intermediate notion of $n$-expansion explored in Section \ref{Expansion algebraic}. This will handle the case of multi-quadratic fields obtained from quadratic fields with prime discriminant. Since its definition treats asymmetrically the cases $i = 2$ and $i = 3$ with respect to the other cases we will reserve special notation for these cases. The nomenclature used in these cases shall become clear in Section \ref{Expansion algebraic}. For $i = 1$ we put $\text{Cons}(V_B, 1) := \text{Gov}(V_B, 1)$ and for $i = 2$ we put
$$
\text{Cons}(V_B, 2) := \text{Sym}(V_B, 2),
$$
where $\text{Sym}(V_B, 2)$ is defined to be the subspace of $b$ in $\widetilde{\text{Multi}}(V_B, 2)$ such that 
$$
b(\sigma_1, \sigma_2)+b(\sigma_2, \sigma_1)=0 \ \emph{(Symmetry)}.
$$ 
For $i = 3$ we put
$$
\text{Cons}(V_B, 3):=\text{Hall--Witt}(V_B, 3),
$$
where $\text{Hall--Witt}(V_B, 3)$ is defined to be the subspace of $\widetilde{\text{Multi}}(V_B, 3)$ consisting of those $b$ such that $P(V_B, 3)(b) \in \prod_{j \in B} \text{Sym}(V_{B - \{j\}}, 2)$ and
$$
b(\sigma_1, \sigma_2, \sigma_3) + b(\sigma_3, \sigma_1, \sigma_2) + b(\sigma_2, \sigma_3, \sigma_1) = 0 \ (\emph{Hall--Witt equation})
$$
for all $\sigma_1, \sigma_2, \sigma_3 \in \{e_j\}_{j \in [n]}$. We make the important observation that each set of three distinct elements $\sigma_1, \sigma_2, \sigma_3 \in \{e_j\}_{j \in [n]}$ gives a unique Hall--Witt equation thanks to the requirement that $P(V_B, 3)(b) \in \prod_{j \in B}\text{Sym}(V_{B - \{j\}}, 2)$; as one permutes $\sigma_1, \sigma_2, \sigma_3$ and uses that $P(V_B, 3)(b) \in \prod_{j \in B}\text{Sym}(V_{B - \{j\}}, 2)$, one gets literally the same equation. 

Finally let $i \geq 4$. Then we put $\text{Cons}(V_B, i)$ to be the subspace of $\widetilde{\text{Multi}}(V_B, i)$ consisting of those $b$ such that $P(V_B, i)(b) \in \prod_{j \in B}\text{Cons}(V_{B - \{j\}}, i - 1)$ and
\begin{align*}
\pi_{j_1} \circ &P(V_{B-\{j_2\}}, i - 1) \circ \pi_{j_2} \circ P(V_B, i)(b) = \\
&\pi_{j_2} \circ P(V_{B-\{j_1\}}, i - 1) \circ \pi_{j_1} \circ P(V_B, i)(b) \ (\emph{Commutativity})
\end{align*}
for all distinct $j_1, j_2 \in B$. Here $\pi_j$ denotes the projection on the $j$-th component. 

\begin{remark} 
\label{asymmetry 3 and 4}
For $i = 2$ the Commutativity equation 
$$
\pi_{j_1} \circ P(V_{B-\{j_2\}}, 1)\circ \pi_{j_2}\circ P(V_B, 2)(b)=\pi_{j_2} \circ P(V_{B - \{j_1\}}, 1) \circ \pi_{j_1} \circ P(V_B, 2)(b)
$$
is equivalent to the Symmetry equation. For $i = 3$ the Commutativity equation is however, in general, \emph{not} satisfied by the elements of $\text{Hall--Witt}(V_B, 3)$. For instance we have the identity $\phi_{(\{1, 2, 3\},1)}(e_1, e_2, e_3) = 0$, while $\phi_{(\{1, 2, 3\},1)}(e_2, e_1, e_3) = 1$. On the other hand we have that $\phi_{(\{1, 2, 3,4\}, 1)}(e_1, e_2, e_3, e_4)=\phi_{(\{1, 2, 3,4\}, 1)}(e_2, e_1, e_3, e_4) = 0$.
\end{remark}

For $B \subseteq [n]$, we define $\text{Gov}(V_B, i)$ to be the space
$$
\text{Gov}(V_B, i)=\langle \{\phi_{(A, x)}\}_{A \subseteq B, \#A = i, x \in A} \rangle.
$$
The following fact will be crucial for us. We shall give two different proofs for it. 

\begin{proposition} 
\label{Consistent tensors are governing tensors}
Let $i$ be in $\mathbb{Z}_{\geq 1}$ and let $B$ be a subset of $[n]$. Then we have that
$$
\emph{Cons}(V_B, i) = \emph{Gov}(V_B, i).
$$
\end{proposition}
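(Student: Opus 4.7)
The plan is to prove the two inclusions separately. For $\text{Gov}(V_B, i) \subseteq \text{Cons}(V_B, i)$ it is enough to verify that each generator $\phi_{(A, x)}$ satisfies the relations defining $\text{Cons}(V_B, i)$. When $i = 2$ the tensor $\phi_{(\{a, x\}, x)} = \chi_a \otimes \chi_x + \chi_x \otimes \chi_a$ is visibly symmetric, and when $i = 3$ the Hall--Witt identity can be confirmed by a direct six-term expansion. When $i \geq 4$, Proposition \ref{P of governing tensors} makes the verification automatic: composing that formula with itself shows that both sides of the Commutativity equation applied to $\phi_{(A, x)}$ equal $\phi_{(A - \{j_1, j_2\}, x)}$ when $\{j_1, j_2\} \subseteq A - \{x\}$ and vanish otherwise, which is manifestly symmetric in $j_1, j_2$; the membership $P(V_B, i)(\phi_{(A, x)}) \in \prod_j \text{Cons}(V_{B - \{j\}}, i - 1)$ then follows by induction.

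For the harder inclusion $\text{Cons}(V_B, i) \subseteq \text{Gov}(V_B, i)$ I would induct on $i$. The base cases $i \in \{1, 2, 3\}$ can be closed by dimension counts. Since $\widetilde{\text{Multi}}(V_B, 3) = \bigoplus_{A \subseteq B, \#A = 3} \widetilde{\text{Multi}}(V_A, 3)$ and both the Symmetry and Hall--Witt constraints decouple across three-element subsets $A$, it suffices to check in the six-dimensional space $\widetilde{\text{Multi}}(V_A, 3)$ that these constraints cut out a two-dimensional subspace, matching the value $\dim \text{Gov}(V_A, 3) = 2$ given by Proposition \ref{counting}.

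For the inductive step $i \geq 4$, take $b \in \text{Cons}(V_B, i)$ and write $P(V_B, i)(b) = (b_j)_{j \in B}$. The inductive hypothesis identifies $b_j \in \text{Cons}(V_{B - \{j\}}, i - 1) = \text{Gov}(V_{B - \{j\}}, i - 1)$, so that $b_j = \sum_{A', x} c_{j, A', x} \phi_{(A', x)}$ for some coefficients. Proposition \ref{P of governing tensors} dictates the gluing: the candidate element of $\text{Gov}(V_B, i)$ is $\sum_{A, x} d_{A, x} \phi_{(A, x)}$ with $d_{A, x} := c_{j, A - \{j\}, x}$ extracted from any $j \in A - \{x\}$. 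Injectivity of $P(V_B, i)$ then forces this candidate to equal $b$. The Commutativity condition, unwound via two applications of Proposition \ref{P of governing tensors}, translates exactly to the assertion that $d_{A, x}$ does not depend on the choice of $j$.

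The main obstacle will be the non-uniqueness of the coefficients $c_{j, A', x}$: the relations $\sum_{x \in A'} \phi_{(A', x)} = 0$ from the proof of Proposition \ref{counting} mean they are defined only modulo a one-dimensional ambiguity for each pair $(j, A')$. I plan to bypass this by identifying $\text{Gov}(V_{B - \{j\}}, i - 1)$ with the canonical quotient $\mathbb{F}_2^{\{(A', x)\}} / \langle \sum_{x \in A'} e_{(A', x)} \rangle$ afforded by Proposition \ref{counting}. The gluing then reads as a cocycle-type identity among residue classes, and the Commutativity condition is exactly this identity.
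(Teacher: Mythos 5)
Your verification of $\text{Gov}(V_B,i)\subseteq\text{Cons}(V_B,i)$ coincides with the paper's. For the reverse inclusion your route is genuinely different from both of the paper's arguments: the paper never tries to lift a governing-tensor representation through $P(V_B,i)$, but instead reduces everything to the dimension bound $\dim_{\mathbb{F}_2}\text{Cons}(V_B,i)\le (i-1)\binom{\#B}{i}$, proved either by filtering $P(V_B,i)(\text{Cons}(V_B,i))$ by the kernels of successive projections $\pi_{j_h}$ and bounding each graded quotient (first proof), or by using the Symmetry, Hall--Witt and Commutativity relations to rewrite any evaluation $b(e_{j_1},\dots,e_{j_i})$ in terms of a canonical list of $(i-1)\binom{\#B}{i}$ tuples (second proof). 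Your gluing argument buys an explicit identification of $b$ with a combination of governing tensors and dispenses with the combinatorial bookkeeping; the paper's counting arguments buy freedom from ever having to control the ambiguity in the coefficients $c_{j,A',x}$, which is precisely the delicate point of your plan.

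That ambiguity is the one place where your sketch is thinner than it can afford to be. Unwinding Commutativity through two applications of Proposition \ref{P of governing tensors} gives, for each $A\supseteq\{j_1,j_2\}$, only that the vectors $(c_{j_1,A-\{j_1\},x})_{x}$ and $(c_{j_2,A-\{j_2\},x})_{x}$ agree on $A-\{j_1,j_2\}$ \emph{modulo the all-ones vector}, say up to a scalar $\epsilon_{j_1j_2}\in\mathbb{F}_2$. This is pairwise compatibility of residue classes, not yet the existence of a single class $(d_{A,x})_{x\in A}$ restricting to all of them, so Commutativity is not literally "exactly" the required identity: you must still show the $1$-cochain $\epsilon$ is a coboundary. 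It is, and cheaply --- restricting the pairwise relations to $A-\{j_1,j_2,j_3\}$, which is nonempty precisely because $i\ge4$, yields $\epsilon_{j_1j_2}+\epsilon_{j_2j_3}+\epsilon_{j_3j_1}=0$, whence $\epsilon_{j_1j_2}=\mu_{j_1}+\mu_{j_2}$ for some $\mu:A\to\mathbb{F}_2$, and after replacing $c_{j,\cdot,\cdot}$ by $c_{j,\cdot,\cdot}+\mu_j$ the classes patch on the nose --- but this short \v{C}ech-type vanishing step must be written out, and it is exactly where the hypothesis $i\ge4$ enters your induction (consistently with Remark \ref{asymmetry 3 and 4}, which shows Commutativity fails for governing tensors at $i=3$). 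With that supplied, your argument is complete.
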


\begin{proof}
For $i = 1$ this is by definition, so from on we assume that $i \geq 2$. We firstly show that $\text{Gov}(V_B, i) \subseteq \text{Cons}(V_B, i)$. For $i = 2$ this is clear, since 
$$
\phi_{(\{j_1, j_2\}, j_1)} = \chi_{j_1} \otimes \chi_{j_2}+\chi_{j_2} \otimes \chi_{j_1},
$$
hence $\phi_{(\{j_1, j_2\}, j_1)} \in \text{Sym}(V_B, 2)$. Next we consider the case $i = 3$. Suppose that $A \subseteq B$ with $\#A=3$ and let $x \in A$. We have to show that $\phi_{(A, x)}$ is in $\text{Hall--Witt}(V_B, 3)$. Thanks to Proposition \ref{P of governing tensors} we see that $P(V_B, 3)(\phi_{(A, x)}) \in \prod_{j \in B}\text{Sym}(V_{B - \{j\}}, 2)$. We next show that $\phi_{(A, x)}$ satisfies the Hall-Witt equation. Write $A := \{y, z, x\}$. Equation (\ref{eGovTen}) shows
$$
\phi_{(A, x)}(e_y, e_z, e_x) + \phi_{(A, x)}(e_z, e_x, e_y) + \phi_{(A, x)}(e_x, e_y, e_z) = 1 + 1 + 0 = 0.
$$
Because $P(V_B, 3)(\phi_{(A, x)}) \in \prod_{j \in B}\text{Sym}(V_{B - \{j\}}, 2)$, this proves that 
$$
\phi_{(A, x)}(e_{\sigma(y)}, e_{\sigma(z)}, e_{\sigma(x)})+\phi_{(A, x)}(e_{\sigma(z)}, e_{\sigma(x)}, e_{\sigma(y)})+\phi_{(A, x)}(e_{\sigma(x)}, e_{\sigma(y)}, e_{\sigma(z)})=0
$$
for any permutation $\sigma$ of $\{x,y,z\}$. Finally observe that if $j \not \in A$, then 
$$
\phi_{(A, x)}(e_j, -, -)=\phi_{(A, x)}(-, e_j, -) = \phi_{(A, x)}(-, -, e_j) = 0. 
$$
Therefore we have verified that $\phi_{(A, x)}$ satisfies all Hall--Witt equations.

We now turn to the case $i \geq 4$. Let $A \subseteq B$ with $\#A = i$ and let $x \in A$. We want to show that $\phi_{(A, x)} \in \text{Cons}(V_B, i)$. Due to Proposition \ref{P of governing tensors} and the inductive step we have that $P(V_B, i)(\phi_{(A, x)}) \in \prod_{j \in B}\text{Cons}(V_{B - \{j\}}, i - 1)$. Next we show that $\phi_{(A, x)}$ satisfies the Commutativity equations. Let $j_1, j_2 \in A$ be distinct. We distinguish two cases. Firstly suppose that $x \in \{j_1, j_2\}$. Then, precisely due to the fact that $i \geq 4$, we see that $\phi_{(A, x)}(e_{j_1}, e_{j_2}, -)=\phi_{(A, x)}(e_{j_2}, e_{j_1}, -)=0$ (see also Remark \ref{asymmetry  3 and 4}). Next suppose that $x \not \in \{j_1, j_2\}$. Then, thanks to Proposition \ref{P of governing tensors} applied twice (which is possible since $i \geq 4$), we have that
\begin{align*}
(\pi_{j_1} \circ &P(V_{B-\{j_1\}}, i - 1) \circ \pi_{j_2} \circ P(V_B, i))(\phi_{(A, x)}) = \phi_{(A - \{j_1, j_2\}, x)} \\
&= (\pi_{j_2} \circ P(V_{B-\{j_2\}}, i - 1) \circ \pi_{j_1} \circ P(V_B, i))(\phi_{(A, x)}).
\end{align*}
Altogether we have established our claim that $\text{Gov}(V_B, i) \subseteq \text{Cons}(V_B, i)$. We next show that $\text{Cons}(V_B, i) \subseteq \text{Gov}(_B, i)$. Thanks to the previous step and Proposition \ref{counting} it suffices to show that 
\begin{align}
\label{eDimIn}
\text{dim}_{\mathbb{F}_2} \text{Cons}(V_B, i) \leq (i - 1) \cdot \binom{\#B}{i}.
\end{align}
We claim that $b \in \text{Cons}(V_B, i)$ is completely determined by its behavior on the set of tuples $(e_{j_1}, \dots, e_{j_i})$ with $j_h \neq j_k$ for distinct $h, k \in [i]$, $j_h \leq j_i$ for each $h \in [i]$, and for every $1 \leq h < k \leq i - 2$ we have $j_h < j_k$. Observe that every subset of $B$ with cardinality $i$ gives precisely $i - 1$ such tuples. We conclude that there are precisely $(i - 1) \cdot \binom{\#B}{i}$ such vectors. Hence the claim implies the desired inequality (\ref{eDimIn}). Therefore our proof is complete once we establish the claim.

To prove the claim, we observe that since $b$ is in $\widetilde{\text{Multi}}(V_B, i)$, $b$ is certainly determined by the tuples $(e_{i_1}, \dots, e_{i_j})$ with $j_h \neq j_k$ for distinct $h, k \in [i]$. Let $(e_{j_1}, \dots, e_{j_i})$ be such a tuple. Thanks to the Commutativity equation applied repeatedly, and the fact that $b$ is in $\text{Cons}(V_B, i)$, we find that we can always assume that for every $1 \leq h < k \leq i - 2$ we have $j_h<j_k$; this does not change the value of $b$. 

Furthermore due to the Symmetry equation we can assume that $j_{i - 1} < j_i$. Therefore we have established the claim in the case $i = 2$, so from now on we assume that $i \geq 3$. Now there are two possibilities. If $j_{i - 2} < j_i$, then our tuple is of the desired form and we are done. Suppose that instead $j_{i - 2} > j_i$. The Hall--Witt and Symmetry equation yield
$$
b(-, e_{j_{i - 2}}, e_{j_{i - 1}}, e_{j_i}) = b(-, e_{j_{i}}, e_{j_{i - 1}}, e_{j_{i - 2}}) + b(-, e_{j_{i - 1}}, e_{j_{i}}, e_{j_{i - 2}}).
$$
Finally we use the commutativity equation once more to rearrange, if needed, the first $i - 2$ entries in $b(-, e_{j_{i - 1}}, e_{j_{i}}, e_{j_{i - 2}})$ to guarantee that the corresponding indices are arranged in monotonic order. Once that is done we have that both summands on the right hand side are evaluations of $b$ in tuples of the desired shape. It follows that $b$ is completely determined by its behavior on such tuples and therefore inequality (\ref{eDimIn}) holds. This concludes the argument. 
\end{proof}

Let $(k_1, \dots, k_n)$ be in $\mathbb{Z}_{\geq 1}^{n}$. We denote by $V_{[k_1, \dots, k_n]}$ the vector space $V_{[k_1]} \times \dots \times V_{[k_n]}$. We have a natural surjective homomorphism $\pi_{(k_1, \dots, k_n)} : V_{[k_1, \dots, k_n]} \twoheadrightarrow V_{[n]}$ obtained by summing each block of $k_i$ coordinates for $i$ in $[n]$. In what follows, wherever the notation suggests so, we are identifying the space $V_{[k_1, \dots, k_n]}$ with the space $V_{k_1+ \dots + k_n}$ by concatenation of coordinates. 

We next define a subspace
$$
\widetilde{\text{Cons}}(V_{[k_1, \dots, k_n]}, i) \subseteq \text{Cons}(V_{k_1+ \dots + k_n}, i),
$$
that will be used in the arithmetical Section \ref{Expansion arithmetic} by means of the intermediate notion of $(k_1, \ldots ,k_n)$-expansion explored in Section \ref{Expansion algebraic}. This will deal with the case of multi-quadratic fields obtained from quadratic fields with composite discriminant. 

\begin{definition} 
\label{furtherly constrained tensors}
Let $i \in \mathbb{Z}_{\geq 2}$ be given. We set $\widetilde{\text{Cons}}(V_{[k_1, \dots, k_n]}, i)$ to be the subgroup of $\text{Cons}(V_{k_1+ \dots + k_n}, i)$ consisting of those $\beta$ in $\text{Cons}(V_{k_1+ \dots + k_n}, i)$ such that
$$
\beta(\sigma_1, \dots, \sigma_{i - 1}, \sigma_{i})=0
$$
whenever we are in one of the following three cases\\
$(1)$ there is $1 \leq h \leq i - 2$ with $\sigma_h \in \text{ker}(\pi_{(k_1, \dots, k_n)})$; \\
$(2)$ both $\sigma_{i - 1}, \sigma_{i}$ are in $\text{ker}(\pi_{(k_1, \dots, k_n)})$; \\
$(3)$ there exist two distinct $h_1, h_2 \in [i]$ and $k \in [n]$ such that 
$$
\pi_{(k_1, \dots, k_n)}(\sigma_{h_1}) = \pi_{(k_1, \dots, k_n)}(\sigma_{h_2}) = e_k.
$$ 
\end{definition}

Recall that Proposition \ref{Consistent tensors are governing tensors} tells us that $\text{Cons}(V_{k_1+ \dots + k_n}, i)$ is generated by governing tensors. Our final step is to show that something analogous is true for the subspace $\widetilde{\text{Cons}}(V_{[k_1, \dots, k_n]}, i)$. To do so we begin by pinpointing the generalized governing tensors. We define a map from $f$ from $[n]$ to the power set of $[k_1 + \dots + k_n]$ by
\begin{align}
\label{ef}
f(i) := [k_1 + \dots + k_i] - [k_1 + \dots + k_{i - 1}].
\end{align}
We say that $j_1, j_2 \in [k_1 + \dots + k_n]$ are in the same block if there is some integer $i$ such that $j_1, j_2 \in f(i)$. If $T \subseteq [k_1 + \dots + k_n]$, we extend $f$ by defining $f(T) = T$. We warn the reader that with this notation we have that $f(i)$ need not equal $f(\{i\})$. For $A \subseteq [n], \#A = i \geq 2, x \in A$ and $T \subseteq f(x)$ we put
$$
\widetilde{\phi}_{(A, T)} := \sum_{\substack{\tau \in \text{Isom}_{\text{Set}}([i], (A - \{x\}) \cup \{T\}) \\ \tau(i - 1) = T \ \text{or} \ \tau(i) = T}} \Bigg (\sum_{x_1 \in f(\tau(1))}\chi_{x_1} \Bigg) \otimes \dots \otimes \Bigg(\sum_{x_i \in f(\tau(i))}\chi_{x_i} \Bigg),
$$
while for $i = 1$ we put $\widetilde{\phi}_{(A, T)} := \sum_{x_i \in T} \chi_{x_i}$. We denote for $i \in \mathbb{Z}_{\geq 1}$
$$
\widetilde{\text{Gov}}(V_{[k_1, \dots, k_n]}, i) := \langle \{\widetilde{\phi}_{(A, T)} \}_{A \subseteq [n] \ \text{with} \ \#A=i, \ x \in A, \ T \subseteq [f(x)]} \rangle
$$
and $\widetilde{\text{Cons}}(V_{[k_1, \dots, k_n]}, 1) := \widetilde{\text{Gov}}(V_{[k_1, \dots, k_n]}, 1)$. We have the following elementary fact which generalizes Proposition \ref{counting}.

\begin{proposition} 
\label{counting more}
Let $i$ be in $\mathbb{Z}_{\geq 2}$. Then we have that
$$
\emph{dim}_{\mathbb{F}_2} \widetilde{\emph{Gov}}(V_{[k_1, \dots, k_n]}, i) = (k_1+ \dots + k_n) \cdot \binom{n - 1}{i - 1} - \binom{n}{i}.
$$
\end{proposition}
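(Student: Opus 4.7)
}

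The strategy is to imitate the argument of Proposition \ref{counting}, adding one preliminary step to reduce to a distinguished set of generators indexed by singletons $T$. Observe first that by multi-linearity of $\widetilde{\phi}_{(A,T)}$ in the "$T$-slot", and the identity $\widetilde{\chi}_{T}=\sum_{t\in T}\widetilde{\chi}_{\{t\}}$, one has
$$
\widetilde{\phi}_{(A,T)}=\sum_{t\in T}\widetilde{\phi}_{(A,\{t\})}.
$$
Hence $\widetilde{\text{Gov}}(V_{[k_1,\dots,k_n]},i)$ is already spanned by the smaller collection
$$
\mathcal{S}:=\{\widetilde{\phi}_{(A,\{t\})}:A\subseteq[n],\ \#A=i,\ t\in f(A)\}.
$$
Counting: for fixed $x\in[n]$, the pairs $(A,t)$ with $x\in A$ and $t\in f(x)$ number $\binom{n-1}{i-1}\cdot k_x$, so $\#\mathcal{S}=(k_1+\dots+k_n)\binom{n-1}{i-1}$. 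This matches the first summand on the right-hand side.

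Next I would locate exactly $\binom{n}{i}$ relations, one for each $A$. The cleanest route is to use the pullback along $\pi_{(k_1,\dots,k_n)}^{\ast}$: since $\pi^{\ast}\chi_{j}=\widetilde{\chi}_{f(j)}$, one checks directly from the definitions that $\pi^{\ast}\phi_{(A,x)}=\widetilde{\phi}_{(A,f(x))}=\sum_{t\in f(x)}\widetilde{\phi}_{(A,\{t\})}$. Applying $\pi^{\ast}$ to the relation $\sum_{x\in A}\phi_{(A,x)}=0$ used in Proposition \ref{counting} produces
$$
\sum_{t\in f(A)}\widetilde{\phi}_{(A,\{t\})}=0
$$
for every $A$ of size $i$, giving $\binom{n}{i}$ relations. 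These relations are independent across different $A$'s for the reason explained in the next paragraph.

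The core of the argument is to show these are the \emph{only} relations. For $A\subseteq[n]$ with $\#A=i$, call a multi-index $(y_1,\dots,y_i)\in[k_1+\dots+k_n]^{i}$ of \emph{block-type $A$} if $\{f^{-1}(y_{\ell})\}_{\ell}=A$ (the $y_\ell$'s hit each block of $A$ exactly once). Inspecting the definition of $\widetilde{\phi}_{(A,\{t\})}$ shows that only basis tensors $\chi_{y_1}\otimes\dots\otimes\chi_{y_i}$ of block-type $A$ can occur with nonzero coefficient; hence generators corresponding to different $A$'s have disjoint support in the standard tensor basis, and relations among $\mathcal{S}$ decompose block-wise over $A$. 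Fix $A$ and a tuple $(y_1,\dots,y_i)$ of block-type $A$: a direct check shows the coefficient of this basis tensor in $\widetilde{\phi}_{(A,\{t\})}$ equals $1$ precisely when $t\in\{y_{i-1},y_{i}\}$ (and is $0$ otherwise). Consequently any linear relation $\sum_{t\in f(A)}c_t\widetilde{\phi}_{(A,\{t\})}=0$ forces $c_{y_{i-1}}+c_{y_{i}}=0$ whenever $y_{i-1},y_{i}$ lie in different blocks of $A$; since $i\geq 2$ and one may freely choose which two blocks $y_{i-1},y_{i}$ come from, the $c_t$ are forced to be constant over $f(A)$, so the space of relations within block $A$ is exactly one-dimensional.

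Combining, for each $A$ the generators $\{\widetilde{\phi}_{(A,\{t\})}\}_{t\in f(A)}$ span a subspace of dimension $\sum_{x\in A}k_x-1$, and these subspaces lie in complementary supports. Summing gives
$$
\dim_{\mathbb{F}_2}\widetilde{\text{Gov}}(V_{[k_1,\dots,k_n]},i)=\sum_{\substack{A\subseteq[n]\\ \#A=i}}\Bigl(\sum_{x\in A}k_x-1\Bigr)=(k_1+\dots+k_n)\binom{n-1}{i-1}-\binom{n}{i},
$$
where the last equality follows by double-counting the pairs $(A,x)$ with $x\in A$. The main obstacle is the last coefficient analysis: one must be careful that the privileged role of the last two tensor slots (encoded in the condition $\tau(i-1)=T$ or $\tau(i)=T$) produces exactly the identification $c_{y_{i-1}}=c_{y_{i}}$ and nothing more; once this is checked the rest is bookkeeping.
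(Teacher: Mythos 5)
Your proof is correct and follows essentially the same route as the paper's: reduce to singleton pointers $T$ by multi-linearity, exhibit exactly one relation per support $A$ (the paper invokes ``the same argument as in Proposition \ref{counting}''), note that the subspaces for different $A$ have disjoint tensor supports, and count. Your explicit coefficient analysis showing the relation space within each block-type is exactly one-dimensional is a welcome filling-in of a step the paper leaves implicit, but it is not a different argument.
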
 

\begin{proof}
It is clear that the space is spanned already by $\widetilde{\phi}_{(A, T)}$ with $\#T=1$. Hence for each choice of $A$ we have $\sum_{s \in A} k_s$ choices of $T$. Among these tensors, obtained from a choice of $A$, there is precisely one relation for the same argument as outlined in the proof of Proposition \ref{counting}.
Furthermore the various spaces obtained as $A$ varies are jointly in direct sum. Therefore we get that 
$$
\text{dim}_{\mathbb{F}_2} \widetilde{\text{Gov}}(V_{[k_1, \dots, k_n]}, i) = \sum_{A \subseteq [n], \#A=i} \left(\left(\sum_{s \in A} k_s\right) - 1\right).
$$
In the total sum each $k_s$ appears $\binom{n - 1}{i - 1}$ many times and the $-1$ appears $\binom{n}{i}$ times. Therefore we get the desired equality. 
\end{proof}

The following fact provides a generalization of Proposition \ref{Consistent tensors are governing tensors}. At this point, it can be proved in three different manners. A first way is to use Proposition \ref{Consistent tensors are governing tensors} in an essential manner; one can show explicitly that the subspace of $\text{Gov}(V_{k_1+ \dots + k_n}, i)$ cut out by the additional requirements of $\widetilde{\text{Cons}}(V_{[k_1, \dots, k_n]}, i)$ is precisely $\widetilde{\text{Gov}}(V_{[k_1, \dots, k_n]}, i)$. The second and third way are to directly transfer the steps of respectively the first and the second proof of Proposition \ref{Consistent tensors are governing tensors} to the present context. We shall only give the third proof.
 
\begin{proposition} 
\label{Consistent tensors are governing tensors more general}
Let $i$ be in $\mathbb{Z}_{\geq 1}$. Then we have that
$$
\widetilde{\emph{Cons}}(V_{[k_1, \dots, k_n]}, i) = \widetilde{\emph{Gov}}(V_{[k_1, \dots, k_n]}, i).
$$
\end{proposition}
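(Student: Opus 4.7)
The plan is to transfer the second proof of Proposition \ref{Consistent tensors are governing tensors} to the present, more constrained setting, as the authors foreshadow.

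For the easy inclusion $\widetilde{\emph{Gov}}(V_{[k_1, \ldots, k_n]}, i) \subseteq \widetilde{\emph{Cons}}(V_{[k_1, \ldots, k_n]}, i)$, I would verify that every generator $\widetilde{\phi}_{(A, T)}$ satisfies the three additional vanishing conditions of Definition \ref{furtherly constrained tensors}. Membership in $\emph{Cons}(V_{k_1 + \ldots + k_n}, i)$ is automatic after expanding $\widetilde{\phi}_{(A, T)}$ as a sum of ordinary governing tensors $\phi_{(S, s)}$ indexed by lifts $S$ of $A$ with one element per block in $A$ and $S \cap f(x) \subseteq T$, and applying Proposition \ref{Consistent tensors are governing tensors}. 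From the defining formula, $\widetilde{\phi}_{(A, T)}$ picks up a contribution on a basis tuple $(e_{j_1}, \ldots, e_{j_i})$ only when the block sequence $(k(j_h))_{h \in [i]}$ realizes a bijection with $A$ that places the block $x$ at position $i-1$ or $i$ with corresponding entry in $T$. Conditions (1) and (2) then follow because any factor $\chi_{f(\tau(h))}$ with $\tau(h) \in A - \{x\}$ kills arguments in $\ker \pi_{(k_1, \ldots, k_n)}$, and condition (3) follows from the injectivity of $\tau$.

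For the opposite inclusion, I would bound $\dim \widetilde{\emph{Cons}}(V_{[k_1, \ldots, k_n]}, i)$ from above by the target value $(k_1 + \ldots + k_n) \binom{n-1}{i-1} - \binom{n}{i}$, which by Proposition \ref{counting more} closes the argument. Starting from the canonical-tuple reduction of the second proof of Proposition \ref{Consistent tensors are governing tensors}, condition (3) restricts attention to basis tuples with pairwise distinct block indices, and condition (1), applied via multi-linearity to differences of basis vectors in the same block at positions $h \leq i - 2$, forces $b(e_{j_1}, \ldots, e_{j_i})$ to depend only on the blocks $k(j_h)$ for $h \leq i - 2$; we may therefore take each such $j_h$ to be the minimum element of its block. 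The commutativity, symmetry and Hall--Witt equations from $\emph{Cons}$ then let us arrange the first $i - 2$ blocks in increasing order and place $j_i$ in the maximal block of $A := \{k(j_h) : h \in [i]\}$.

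The main obstacle is that this initial reduction still overcounts. The missing linear relations all come from a second, subtler use of condition (1): for each block $k \in [n]$ of size at least two and each pair $j \neq j' \in f(k)$, the element $\sigma = e_j + e_{j'}$ lies in $\ker \pi_{(k_1, \ldots, k_n)}$, so condition (1) at position $1$ gives $b(\sigma, -, \ldots, -) \equiv 0$; expanding by multi-linearity and reducing the resulting terms to canonical evaluations via Hall--Witt, symmetry and the intra-block identifications already in place yields a nontrivial linear dependence among canonical values. A careful bookkeeping of these relations, grouped by the subset $A \subseteq [n]$ of size $i$ and by the choices of excess elements in the maximal block and in the block at position $i - 1$, confirms that subtracting them from the raw canonical count cuts the dimension down to exactly $(k_1 + \ldots + k_n) \binom{n-1}{i-1} - \binom{n}{i}$, matching $\dim \widetilde{\emph{Gov}}(V_{[k_1, \ldots, k_n]}, i)$ by Proposition \ref{counting more} and completing the proof.
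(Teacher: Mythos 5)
Your treatment of the inclusion $\widetilde{\text{Gov}}(V_{[k_1, \dots, k_n]}, i) \subseteq \widetilde{\text{Cons}}(V_{[k_1, \dots, k_n]}, i)$ is correct and matches the paper: one expands $\widetilde{\phi}_{(A,T)}$ into ordinary governing tensors to get membership in $\text{Cons}(V_{k_1 + \dots + k_n}, i)$ via Proposition \ref{Consistent tensors are governing tensors}, and the three vanishing rules of Definition \ref{furtherly constrained tensors} are checked termwise. The problem is in the reverse inclusion, i.e.\ the dimension bound, where there is a genuine gap. You correctly observe that the naive canonical-tuple reduction overcounts, but you attribute the missing constraints to the wrong rule. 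Rule $(1)$ of Definition \ref{furtherly constrained tensors} only concerns the slots $1, \dots, i-2$; applying it to $e_j + e_{j'}$ with $j, j'$ in the same block at position $1$ merely says that $b$ depends only on the block in those slots, which is exactly the identification you have already built into your ``initial reduction.'' For $i \geq 4$ the Hall--Witt and Symmetry manipulations never touch position $1$, so reducing both sides of $b(e_j, -) = b(e_{j'}, -)$ to canonical form produces the same canonical value on each side and hence no new relation. What actually controls the last two slots --- where your canonical set is still far too large, of size roughly $\bigl(\sum_{s \in A - \{\max A\}} k_s\bigr) \cdot k_{\max A}$ per support $A$ instead of $\bigl(\sum_{s \in A} k_s\bigr) - 1$ --- is rule $(2)$: the vanishing of $b(\dots, e_{r_{i-1}} + v_{i-1}, e_{r_i} + v_i)$ when neither of the last two entries is the distinguished element of its block. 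Your proposal never invokes rule $(2)$ in the counting argument, and without it the gap between your canonical count and the target $(k_1 + \dots + k_n)\binom{n-1}{i-1} - \binom{n}{i}$ cannot be closed.

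There is also a structural issue: ``subtracting relations from a raw count'' requires you to exhibit the relations and prove they are linearly independent as functionals on the span of the canonical evaluations, and none of this is carried out --- it is deferred to ``careful bookkeeping,'' which is precisely where all the work lies. The paper avoids this entirely by directly producing a determining set of evaluations of exactly the right cardinality: tuples with pairwise distinct blocks, the first $i-2$ entries block-minimal and increasing, at least one of the last two entries block-minimal (this is where rule $(2)$ enters), and a precise ordering condition on $r_{i-1}$ and $r_i$ relative to the rest. Reaching that set requires an interleaved, multi-stage application of rules $(1)$--$(3)$ with the Symmetry, Hall--Witt and Commutativity equations (including re-applying rule $(1)$ after Hall--Witt moves an entry out of the last two slots). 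To repair your argument you would need to incorporate rule $(2)$ into the normalization of the last two positions and then carry out that reduction explicitly, at which point you would essentially be reproducing the paper's proof.
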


\begin{proof}
For $i = 1$ this is by definition. Henceforth we shall assume $i \geq 2$. We start off by proving that
\begin{align}
\label{eIncMult}
\widetilde{\text{Gov}}(V_{[k_1, \dots, k_n]}, i) \subseteq \widetilde{\text{Cons}}(V_{[k_1, \dots, k_n]}, i).
\end{align}
Thanks to Proposition \ref{Consistent tensors are governing tensors} we already know that $\widetilde{\text{Gov}}(V_{[k_1, \dots, k_n]}, i) \subseteq \text{Cons}(V_{k_1+ \dots + k_n}, i)$. We still need to verify that for each set $A$ with $\#A = i$, $\widetilde{\phi}_{(A, T)}$ vanishes on all tuples $(\sigma_1, \dots, \sigma_i)$ satisfying at least one of the three conditions listed in Definition \ref{furtherly constrained tensors}. In all three cases one finds as a result of a direct inspection that the desired vanishing takes place even termwise for the right hand side of the defining equation of $\widetilde{\phi}_{(A, T)}$. This establishes equation (\ref{eIncMult}).

Due to Proposition \ref{counting more} it remains to prove that
\begin{align}
\label{eDimIn2}
\text{dim}_{\mathbb{F}_2} \widetilde{\text{Cons}}(V_{[k_1, \dots, k_n]}, i) \leq (k_1+ \dots + k_n) \cdot \binom{n - 1}{i - 1} - \binom{n}{i}
\end{align}
To this end, let $g: [n] \rightarrow [k_1 + \dots k_n]$ be the function $g(s) := k_1 + \dots + k_{s - 1} + 1$. Let $b \in \widetilde{\text{Cons}}(V_{[k_1, \dots, k_n]}, i)$. We claim that $b$ is determined by its value on the set of tuples $(e_{r_1}, \dots, e_{r_i})$ with the following properties
\begin{enumerate}
\item[(a)] $r_1, \dots, r_i$ are pairwise in different blocks;
\item[(b)] every $r_h$ with $h$ not in $\{i - 1, i\}$ is in the image of $g$;
\item[(c)] at least one among $r_{i - 1}$ and $r_i$ is in the image of $g$;
\item[(d)] the first $i-2$ values of the indices are ordered in a strictly increasing manner and furthermore $r_j < r_i$ for every $j$ with $1 \leq j \leq i-1$;
\item[(e)] in case $r_i$ is not in the image of $g$ then $r_j<r_{i-1}$ for every $j$ with $1 \leq j \leq i-2$.
\end{enumerate}

Observe that each subset $A \subseteq [n]$ with $\#A=i$ gives a collection of $(\sum_{s \in A} k_s) - 1$ such tuples, by demanding that for each $j \in [i]$, the value $r_j$ belongs to a block whose index is in $A$. Indeed, one has $\sum_{s < \text{max}(A)}k_s$ choices when the $i$-th entry is in the image of $g$ and $k_{\text{max}(A)}-1$ when the last entry is not in the image of $g$, thanks to the last requirement. Therefore the claim implies inequality (\ref{eDimIn2}) by the same argument as in Proposition \ref{counting more}, and hence the proposition. 

We now prove the claim. Thanks to rule $(3)$ of Definition \ref{furtherly constrained tensors} we can assume that each of the indices $r_j$ belongs to a different block. Next, since $\widetilde{\text{Cons}}(V_{[k_1, \dots, k_n]}, i)$ is in particular a subspace of $\text{Cons}(V_{k_1+ \dots+ k_n}, i)$, we can always assume that the first $i-2$ values of $r_j$ are given in a strictly increasing fashion. Furthermore, thanks to rule $(1)$ of Definition \ref{furtherly constrained tensors}, we can assume that for each of the first $i-2$ values of $j$ the element $r_j$ is in the image of $g$. 

We claim that at least one of $r_{i-1}$ and $r_i$ can be assumed to be in the image of $g$. Indeed suppose that is not the case. Let $v_{i-1}$ and $v_i$ be respectively the unique basis vectors in the block of $r_{i - 1}$ and $r_i$ that is in the image of $g$. Now thanks to rule $(2)$ of Definition \ref{furtherly constrained tensors} we see that $b( \ldots, e_{r_{i-1}}+v_{i-1},e_{r_i}+v_i)=0$. After expanding this, we get that the desired value of $b$ can be expressed as the sum of $3$ values with the property that at least one between the $i-1$-th and the $i$-th entry is in the image of $g$. This shows the claim.

Summarizing, we have shown that the vector can be taken with indices belonging to all different blocks, the first $i-2$ entries with indices ordered in a strictly increasing fashion, all in the image of $g$, and the last two entries having at least one of the two indices in the image of $g$. Now applying the Symmetry equation we can also assume that $r_{i-1}<r_i$. We next reduce to the case where $r_i > r_j$ for each $j \in [i-1]$. For this we can assume assume that $i \geq 3$, otherwise we are already done. Since the first $i-2$ are ordered, we only need to ensure that $r_i>r_{i-2}$. Suppose not, then applying the Hall--Witt equation yields
$$
b(\ldots, e_{r_{i-2}},e_{r_{i-1}},e_{r_i})=b(\ldots, e_{r_i},e_{r_{i-2}},e_{r_{i-1}})+b(\ldots, e_{r_{i-1}},e_{r_i},e_{r_{i-2}}).
$$
Now observe that in both summands the largest entry, $r_{i-2}$, is among the last two coordinates, and hence as an application of the Symmetry equation can be assumed to be the last entry. Also one of the two terms $b(\ldots, e_{r_i},e_{r_{i-2}},e_{r_{i-1}})$ or $b(\ldots, e_{r_{i-1}},e_{r_i},e_{r_{i-2}})$ might have the $i-2$-th entry not in the image of $g$. However we can apply rule $(1)$ of Definition \ref{furtherly constrained tensors} to fix this. Finally we can reapply the Commutativity equation, to get $r_i$ and $r_{i-1}$ in their proper placement among the first $i-2$ indices in the respective terms $b(\ldots, e_{r_i},e_{r_{i-2}},e_{r_{i-1}})$ and $b(\ldots, e_{r_{i-1}},e_{r_i},e_{r_{i-2}})$. 

Summarizing once more, we have shown that the vector can be taken with indices belonging to all different blocks, the first $i-2$ entries with indices ordered in a strictly increasing fashion, all in the image of $g$, the last two entries having one of the two indices in the image of $g$ and the index $r_i$ is larger than $r_j$ for all $j$ in $[i-1]$. Now in case $r_i$ is in the image of $g$ this becomes one of the listed vectors and hence we are done. 

So assume that $r_i$ is not in the image of $g$. Then $r_{i-1}$ must be in the image of $g$, since we know that at least one of the two is in the image of $g$. Then in case $r_{i-1}$ is larger than $r_j$ for each $j$ in $[i-2]$ we are also dealing with one of the listed vectors and hence we are done. Therefore we can assume that $r_{i-1}$ is not maximal among the first $i-1$ indices. Since the first $i-2$ indices are ordered, this amounts to $r_{i-1} < r_{i-2}$. We apply again the Hall--Witt equation to obtain that 
$$
b(\ldots, e_{r_{i-2}},e_{r_{i-1}},e_{r_i})=b(\ldots, e_{r_i},e_{r_{i-2}},e_{r_{i-1}})+b(\ldots, e_{r_{i-1}},e_{r_i},e_{r_{i-2}}).
$$
We see that $b(\ldots, e_{r_{i-1}},e_{r_i},e_{r_{i-2}})$ is an evaluation at a vector with the desired shape after we apply the Symmetry equation to fix the order of the last two indices and the Commutativity equation to get $r_{i-1}$ in the right placement among the first $i-2$ indices. We now focus on the first term. We can apply rule $(1)$ from Definition \ref{furtherly constrained tensors} to rewrite the evaluation  $b(\ldots, e_{r_i},e_{r_{i-2}},e_{r_{i-1}})=b(\ldots, e_{r_i'},e_{r_{i-2}},e_{r_{i-1}})$, where $r_i'$ is the unique index in the same block of $r_i$ and in the image of $g$. We now only need to handle the fact that the largest term is in position $i-2$. To do so we apply one more time on this last term the Hall--Witt equation to obtain 
$$
b(\ldots, e_{r_i^{'}},e_{r_{i-2}},e_{r_{i-1}})=b(\ldots, e_{r_{i-2}},e_{r_{i-1}},e_{r_{i}^{'}})+b(\ldots, e_{r_{i-1}},e_{r_{i}^{'}},e_{r_{i-2}}).
$$
Now the first term is one of the listed evaluations, so we need to only focus on the last term. We can apply the Symmetry equation to swap the last two entries and the Commutativity equation to put $r_{i-1}$ in its proper placement among the first $i-2$ indices. The resulting evaluation is among the listed ones. This completes the proof.
\end{proof}
 
\section{Expansion groups and expansion Lie algebras} \label{Expansion algebraic}
For the remainder of this section let $n$ be a positive integer and let $(k_1, \dots, k_n)$ be a vector in $\mathbb{Z}_{\geq 1}^{n}$. The goal of this section is to analyze the category of $n$-\emph{expansion groups} and more generally $[(k_1, \dots, k_n)]$-\emph{expansion groups}. To do so, we will introduce and analyze the category of $n$-\emph{expansion Lie algebras} and more generally $[(k_1, \dots, k_n)]$-\emph{expansion Lie algebras}. As we shall see in Section \ref{Expansion arithmetic}, these are algebraic structures that capture the essential properties satisfied by the Galois groups that occur in higher genus theory. 

The central results of this section are classification theorems for these categories in terms of universal objects, which are naturally built using the tensors spaces of Section \ref{Auxiliary tensor spaces}: the main step of the classification theorems will be an application of Proposition \ref{Consistent tensors are governing tensors} for $n$-expansions and Proposition \ref{Consistent tensors are governing tensors more general} for $[(k_1, \dots, k_n)]$-expansions, which, therefore, form the technical heart behind the proofs of the classification theorems. 

The spaces $\text{Cons}(V_{[n]}, j)$ will be crucial to handle $n$-expansions while the more general spaces $\widetilde{\text{Cons}}(V_{[k_1, \dots, k_n]}, j)$ will be crucial to handle $[(k_1, \dots, k_n)]$-expansions. Recall that $\widetilde{\text{Cons}}(V_{[k_1, \dots, k_n]}, j)$ is a special subspace of $\text{Cons}(V_{[k_1, \dots, k_n]}, j)$. Similarly, a $[(k_1, \dots, k_n)]$-expansion is by definition a $(k_1+ \dots + k_n)$-expansion with extra conditions. For this reason we firstly deal with the more fundamental notion of $n$-expansion and we then explain how to handle the slightly refined notion of $[(k_1, \dots, k_n)]$-expansion. 

When we apply the material of this section to the arithmetical Section \ref{Expansion arithmetic}, this way of proceeding, once unwrapped, boils down to first establishing higher genus theory for multi-quadratic number fields obtained as the compositum of quadratic number fields of prime discriminant and then proceed with the general case by means of a reduction to the prime case.

\subsection{\texorpdfstring{$n$-expansion groups and $n$-expansion Lie algebras}{n-expansion groups and n-expansion Lie algebras}}
\label{n-expansion and $n$-expansion Lie algebras}
We begin with the definition of an $n$-expansion group. Recall from the beginning of Section \ref{Auxiliary tensor spaces} the definition of $V_{[n]}$ and of $\{e_i\}_{i \in [n]}$. 

\begin{definition} 
\label{Expansion of groups}
We call a triple $(G, \phi, (g_1, \dots, g_n))$ an $n$-\emph{expansion group} if the following properties hold 
\begin{enumerate}
\item $G$ is a group, $\phi$ is a homomorphism from $G$ to $V_{[n]}$, $g_i \in G$ and $\phi(g_i)=e_i$; 
\item $\text{ker}(\phi)$ is a vector space over $\mathbb{F}_2$;
\item we have that $[G, G] = \text{ker}(\phi)$;
\item we have that $g_i^{2} = \text{id}$ for each $i \in [n]$. 
\end{enumerate}
\end{definition}

To examine $n$-expansion groups it is convenient to first introduce and examine $n$-expansion Lie algebras. Recall the following well-known procedure to attach a graded Lie algebra to a group $G$. For each $m \in \mathbb{Z}_{\geq 1}$ we set
$$
L_m(G) := \frac{G^{(m)}}{G^{(m+1)}},
$$
where for every positive integer $i$ the group $G^{(i)}$ denotes the $i$-th term of the descending central series of $G$ defined recursively as $G^{(1)}= G$ and $G^{(i + 1)}= [G, G^{(i)}]$ for every positive integer $i$. Taking commutators in $G$ induces a bi-linear operator $[-, -]_{G}$ on
$$
L_{\bullet}(G) := \bigoplus_{m \in \mathbb{Z}_{\geq 1}} L_m(G).
$$ 
It is easy to verify that the operator is alternating and the content of the Hall-Witt identity is translated in the Jacobi identity, i.e. $(L_{\bullet}(G), [,]_{G})$ is a Lie algebra. Furthermore the algebra $L_{\bullet}(G)$ is \emph{graded}, meaning that $[L_h(G), L_k(G)]_{G} \subseteq L_{h+ k}(G)$ for each $h,k \in \mathbb{Z}_{\geq 1}$. For each group homomorphism $f:G_1 \to G_2$ we denote the naturally corresponding graded Lie algebra homomorphism with $L(f):L_{\bullet}(G_1) \to L_{\bullet}(G_2)$. We conclude that the assignment $G \mapsto L_{\bullet}(G)$ is a functor.

We call a graded Lie algebra\footnote{The notation $L_{\bullet}$ will always stand for a graded Lie algebra and will implicitly give the notation $L_m$, where for a positive integer $m$ the group $L_m$ is the $m$-th piece of the grading and $L_{\bullet} = \bigoplus_{m \in \mathbb{Z}_{\geq 1}}L_m$.} $(L_{\bullet}, [,])$ a graded Lie algebra over $\mathbb{F}_2$ in case $L_{\bullet}$ is a vector space over $\mathbb{F}_2$. We view the vector space $V_{[n]}$ as a graded abelian Lie algebra completely concentrated in degree $1$ by equipping it with the zero map as Lie bracket.
 
If $L$ is a Lie algebra, then we can define the descending central series for $L$ to be the sequence of $L^{(i)}$, defined for each $i \in \mathbb{Z}_{\geq 1}$, in the following recursive manner. We put $L^{(1)} = L$ and $[L, L^{(i)}] = L^{(i + 1)}$ for every positive integer $i$. For a positive integer $m$ we say that $L$ is $m$-nilpotent if $L^{(m)} = 0$. Furthermore we say that $L$ is $\omega$-nilpotent in case $\bigcap_{m \in \mathbb{Z}_{\geq 1}}L^{(m)} = \{0\}$. Observe that, since our gradings always start with $1$, any graded Lie algebra is automatically $\omega$-nilpotent. As we shall see in Proposition \ref{Lie algebra of an expansion is an expansion of a Lie algebra}, Definition \ref{Expansion of groups} has the following Lie algebra analogue.
 
\begin{definition} 
\label{Expansion of a Lie algebra}
We call a pair $((L_{\bullet}, [,]), \psi)$ an $n$-\emph{expansion Lie algebra} if the following properties hold 
\begin{enumerate}
\item $(L_{\bullet}, [,])$ is a graded Lie algebra over $\mathbb{F}_2$ and $\psi$ is a surjective homomorphism of graded Lie algebras from $L_{\bullet}$ to $V_{[n]}$;
\item $(\text{ker}(\psi), [,])$ is an abelian sub-algebra;
\item $[L_{\bullet}, L_{\bullet}] =\text{ker}(\psi)$;
\item for each $i \in \mathbb{Z}_{\geq 4}$ and for each $\sigma_1, \dots, \sigma_i$ in $L_1$ the commutator 
$$
[\sigma_{1}, [\sigma_{2}, [\dots, [\sigma_{i - 1}, \sigma_{i}] \dots ]]]
$$ 
does not depend on the order of the first $i - 2$ entries and vanishes as soon as there are distinct $s,t \in [i - 2]$ with $\sigma_s=\sigma_t$. Furthermore if $\tau_1, \tau_2,$ are in $L_1$ and $\psi(\tau_1)$ is in $\{e_1, \dots, e_n\}$, then $[\tau_1, [\tau_1, \tau_2]] =0$. 
\end{enumerate}
\end{definition}

The following fact explains the connection between Definition \ref{Expansion of groups} and Definition \ref{Expansion of a Lie algebra}. 

\begin{proposition} 
\label{Lie algebra of an expansion is an expansion of a Lie algebra}
Suppose that an $n$-expansion group $(G, \phi,(g_1, \dots, g_n))$ is given. Then $((L_{\bullet}(G), [,]_{G}), L_{\bullet}(\phi))$ is an $n$-expansion Lie algebra. 
\end{proposition}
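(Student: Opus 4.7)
The plan is to verify the four conditions of Definition \ref{Expansion of a Lie algebra} for $(L_{\bullet}(G),[,]_G)$ equipped with $L_{\bullet}(\phi)$. The first three conditions are bookkeeping translations of the group-theoretic hypotheses on $(G,\phi,(g_1,\ldots,g_n))$; the genuine content sits in condition $(4)$, whose heart I would isolate as a single calculation involving each involution $g_j$.

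First I would check that $L_{\bullet}(G)$ is a graded $\mathbb{F}_2$-Lie algebra. The abelianization $L_1(G)=G/[G,G]$ is generated by the classes of the involutions $g_i$, hence killed by $2$, and for $i\geq 2$ one has $G^{(i)}\subseteq G^{(2)}=\ker\phi$, so each higher $L_i(G)$ inherits the $\mathbb{F}_2$-structure from the hypothesis that $\ker\phi$ is an $\mathbb{F}_2$-vector space. Since $\phi$ vanishes on $[G,G]=\ker\phi$ and on $G^{(i)}$ for $i\geq 2$, it induces a surjective morphism of graded Lie algebras $L_{\bullet}(\phi)\colon L_{\bullet}(G)\twoheadrightarrow V_{[n]}$ concentrated in degree $1$, whose kernel is $\bigoplus_{i\geq 2}L_i(G)$; this settles condition $(1)$. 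For condition $(2)$ I would combine $G^{(i)}\subseteq G^{(2)}$ for $i\geq 2$ with the fact that $\ker\phi$ is abelian to obtain $[G^{(i)},G^{(j)}]=\{1\}$ whenever $i,j\geq 2$, hence $[L_i(G),L_j(G)]=0$ in that range. Condition $(3)$ follows from $G^{(i+1)}=[G,G^{(i)}]$, which forces $L_{i+1}=[L_1,L_i]$ and therefore $[L_{\bullet},L_{\bullet}]=\ker L_{\bullet}(\phi)$.

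The heart of the argument is condition $(4)$. The pivotal identity I would isolate first is that $[g_j,[g_j,h]]=1$ already in $G$, for every involution $g_j$ and every $h\in G$. Indeed, expanding $[g_j^2,h]=[g_j,h]^{g_j}\cdot[g_j,h]$ and using $g_j^2=1$ yields $[g_j,h]^{g_j}=[g_j,h]^{-1}$, whence $[g_j,[g_j,h]]=[g_j,h]^{-2}$, which is trivial because $[g_j,h]\in[G,G]=\ker\phi$ has exponent $2$. This immediately delivers the final clause of condition $(4)$, since any $\tau_1\in L_1$ with $\psi(\tau_1)=e_j$ lifts to $g_j\cdot[G,G]$.

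For the main clause of condition $(4)$, fix $i\geq 4$. The nested tail $y:=[\sigma_3,[\ldots,[\sigma_{i-1},\sigma_i]\ldots]]$ lies in $L_{i-2}\subseteq\ker L_{\bullet}(\phi)$ since $i-2\geq 2$, and the same holds for any further-in tail arising during the argument. Adjacent transpositions within the first $i-2$ entries are then free: Jacobi gives $[\sigma_s,[\sigma_{s+1},z]]+[\sigma_{s+1},[\sigma_s,z]]=[[\sigma_s,\sigma_{s+1}],z]$, and the right-hand side vanishes because both $[\sigma_s,\sigma_{s+1}]\in L_2$ and the tail $z$ lie in the abelian kernel furnished by condition $(2)$. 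To deduce vanishing when two of the first $i-2$ entries agree, I would use order-independence to place them in positions $1$ and $2$, reducing to $[\sigma,[\sigma,y]]=0$ for $\sigma\in L_1$ and $y\in\ker L_{\bullet}(\phi)$. Writing $\sigma=\sum_j c_j\bar g_j$ in the $\mathbb{F}_2$-basis of $L_1$ dual to $\phi$, the same symmetry $[\bar g_j,[\bar g_k,y]]=[\bar g_k,[\bar g_j,y]]$ forces the off-diagonal cross terms to cancel in pairs in characteristic $2$, so $[\sigma,[\sigma,y]]=\sum_j c_j[\bar g_j,[\bar g_j,y]]$, each summand of which vanishes by the pivotal identity. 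I expect no genuine obstacle beyond this bookkeeping once the involution calculation $[g_j,[g_j,h]]=1$ is in hand.
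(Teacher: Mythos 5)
Your proof is correct, and for the substantive condition $(4)$ of Definition \ref{Expansion of a Lie algebra} it takes a genuinely different route from the paper's. Conditions $(1)$--$(3)$ are handled the same way in both arguments. For condition $(4)$ the paper exploits the module structure of $[G,G]$ over the commutative group ring $\mathbb{F}_2[G/[G,G]]\simeq\mathbb{F}_2[\mathbb{F}_2^n]$: it rewrites the nested commutator as $(1+\widetilde{\sigma}_1)\cdots(1+\widetilde{\sigma}_{i-2})([\widetilde{\sigma}_{i-1},\widetilde{\sigma}_i])$, so that order-independence of the first $i-2$ entries is just commutativity of the group ring, and vanishing on a repeated entry is the fact that non-units of $\mathbb{F}_2[\mathbb{F}_2^n]$ square to zero; the final clause is obtained by the same involution computation as yours, but carried out only for $\tau_2$ the class of some $g_k$ and then extended by linearity in $\tau_2$. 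You instead stay entirely inside the associated graded Lie algebra: the Jacobi identity combined with the already-established abelianness of $\ker L_{\bullet}(\phi)$ gives $[\sigma,[\tau,z]]=[\tau,[\sigma,z]]$ whenever $z$ and $[\sigma,\tau]$ lie in the kernel, whence order-independence via adjacent transpositions, and the repetition clause reduces by bilinearity and this same symmetry to the diagonal terms $[\overline{g_j},[\overline{g_j},y]]$, which your identity $[g_j,[g_j,h]]=1$ (valid for arbitrary $h\in G$, so slightly more general than the paper's version and needing no basis-extension step) kills outright. Both arguments are complete. The paper's group-ring viewpoint has the advantage of being the perspective reused throughout the rest of the paper (e.g.\ in Proposition \ref{Augmentation=descending central} and in the dictionary with expansion maps into $\mathbb{F}_2[\mathbb{F}_2^m]\rtimes\mathbb{F}_2^m$), whereas yours isolates more transparently which Lie-theoretic identities are actually responsible for axiom $(4)$ and never needs to invoke the group ring.
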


\begin{proof}
We claim that $(L_{\bullet}(G), [,]_{G})$ is a graded Lie algebra over $\mathbb{F}_2$ and $L_{\bullet}(\phi)$ an epimorphism of graded Lie algebras. Indeed, $[G, G] = G^{(2)}$ is a vector space over $\mathbb{F}_2$ thanks to axiom $(2)$ and $(3)$ of Definition \ref{Expansion of groups} and $L_1(G)=\frac{G}{G^{(2)}}$ is also a vector space over $\mathbb{F}_2$ due to axiom $(1)$ of Definition \ref{Expansion of groups}. Finally observe that the map $L_{\bullet}(\phi)$ is an isomorphism between $L_1(G)$ and $V_{[n]}$, which sends every other graded piece to $0$. Hence $L_{\bullet}(\phi)$ is an epimorphism of graded Lie algebras, thus establishing our claim.

Next, thanks to axiom $(2)$ of Definition \ref{Expansion of groups}, we have that every commutator in $G^{(2)}$ vanishes. Therefore $\oplus_{i \in \mathbb{Z}_{\geq 2}}L_i$ is an abelian subalgebra, which is also the kernel of $L_{\bullet}(\phi)$. Therefore axiom $(2)$ is verified. Similarly, axiom $(3)$ is an immediate reformulation of axiom $(3)$ of Definition \ref{Expansion of groups}. 

We finally show that axiom $(4)$ is satisfied for $((L_{\bullet}(G), [,]_{G}), L_{\bullet}(\phi))$. To do so, let $ \widetilde{\tau_1}, \widetilde{\tau_2} \in \{g_1, \dots, g_n\}$ be the involutions given by axiom $(4)$ of Definition \ref{Expansion of groups}. Observe that $[\widetilde{\tau_1}, \widetilde{\tau_2}]^{2} = \text{id}$ thanks to axiom $(2)$ and $(3)$ of Definition \ref{Expansion of groups}. Furthermore we know that $\widetilde{\tau_1}^2 = \widetilde{\tau_2}^2 = \text{id}$. It follows that
\begin{align*}
[\widetilde{\tau_1}, [\widetilde{\tau_1}, \widetilde{\tau_2}]] &= \widetilde{\tau_1}[\widetilde{\tau_1}, \widetilde{\tau_2}]\widetilde{\tau_1}[\widetilde{\tau_1}, \widetilde{\tau_2}]
= \widetilde{\tau_1}\widetilde{\tau_1}\widetilde{\tau_2} \widetilde{\tau_1} \widetilde{\tau_2} \widetilde{\tau_1} \widetilde{\tau_1}\widetilde{\tau_2} \widetilde{\tau_1} \widetilde{\tau_2} \\
&= \widetilde{\tau_2} \widetilde{\tau_1} \widetilde{\tau_2} \widetilde{\tau_2} \widetilde{\tau_1} \widetilde{\tau_2}
= \widetilde{\tau_2} \widetilde{\tau_1} \widetilde{\tau_1} \widetilde{\tau_2}
= \widetilde{\tau_2}  \widetilde{\tau_2}=\text{id}.
\end{align*}
Now suppose firstly that $\tau_1, \tau_2 \in L_1(G)$ with $\psi(\tau_1), \psi(\tau_2) \in \{e_1, \dots, e_n\}$. We can find in the set $\{g_1, \dots, g_n\}$ lifts $\widetilde{\tau_1}, \widetilde{\tau_2}$ of $\tau_1$ and $\tau_2$, so that $\widetilde{\tau_1}, \widetilde{\tau_2}$ are involutions. Hence the calculation above shows 
\begin{align}
\label{etau}
[\tau_1, [\tau_1, \tau_2]] = 0. 
\end{align}
We claim that this implies the last part of axiom $(4)$ from Definition \ref{Expansion of a Lie algebra}. Indeed, on the one hand the assignment $\sigma \mapsto [\tau_1,[\tau_1,\sigma]]$ is a linear functional from $L_1(G)$ to $L_3(G)$. On the other hand we have shown that it vanishes for any choice of $\sigma:=\tau_2$ in the classes of $\{g_1, \dots, g_n\}$. However such classes form a basis of $L_1(G)$, as we can see combining axioms $(1),(2)$ and $(3)$ from Definition \ref{Expansion of groups}. Therefore the functional $[\tau_1,[\tau_1,-]]$ is the trivial one and the validity of the last part of axiom $(4)$ for $(L_{\bullet}(G), L_{\bullet}(\phi))$ is established.

Next observe that since $\text{ker}(\phi) = [G, G]$ is abelian, the action of $G$ on $[G, G]$ factors completely modulo $[G, G]$. Since $[G, G]$ is a $\mathbb{F}_2$-vector space, it naturally becomes a module over the group ring $\mathbb{F}_2[\frac{G}{[G, G]}]$. Therefore we have for all $\widetilde{\sigma_1}, \dots, \widetilde{\sigma_i} \in G$
$$
[\widetilde{\sigma_{1}}, [\widetilde{\sigma_{2}}, [\dots, [\widetilde{\sigma_{i - 1}}, \widetilde{\sigma_{i}}] \dots ]]] = (1 + \widetilde{\sigma_1}) \cdot \ldots \cdot (1+\widetilde{\sigma_{i - 2}})([\widetilde{\sigma_{i - 1}}, \widetilde{\sigma_i}]).
$$
Here the elements $1+\widetilde{\sigma_1}, \dots,1+\widetilde{\sigma_i}$ are taken in the group ring $\mathbb{F}_2[\frac{G}{[G, G]}]$. This group ring is commutative and isomorphic to $\mathbb{F}_2[\mathbb{F}_2^{n}]$. In this ring every element that is not a unit has square equal to $0$. Now let $\sigma_1, \dots, \sigma_i$ be in $L_1(G)$. We can lift $\sigma_1, \dots, \sigma_i$ to elements $\widetilde{\sigma_1}, \dots, \widetilde{\sigma_i}$ in $G$. Then, by the calculation we have just done, we see that the value of the bracket is independent of the order of the first $i - 2$ elements thanks to commutativity of the group ring. 

Furthermore, we see that if two entries are equal, we can rearrange the order in such a way that $(1+\widetilde{\sigma_1}) \cdot \ldots \cdot (1+\widetilde{\sigma_{i - 2}})$ contains a square of an element in the augmentation ideal (hence not a unit), so the product is $0$. This concludes the proof. 
\end{proof}

We shall need the following fact, which will guarantee that an $n$-expansion can be completely recovered from a naturally attached multi-linear structure on $V_{[n]}$. From now on we will frequently abbreviate $(L_{\bullet}, [,])$ simply as $L_{\bullet}$.

\begin{proposition} 
\label{All comes from L_1}
Let $(L_{\bullet}, \psi)$ be an $n$-expansion Lie algebra. Then for each $i \in \mathbb{Z}_{\geq 1}$ we have that $L_i$ is the span of $[\sigma_{1}, [\sigma_{2}, [\dots, [\sigma_{i - 1}, \sigma_{i}] \dots ]]]$ with $\sigma_1, \dots, \sigma_i$ varying in $L_1$. Furthermore, the map $\psi$ induces an isomorphism between $L_1$ and $V_{[n]}$. 
\end{proposition}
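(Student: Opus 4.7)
The plan is to prove the second assertion first, then use axioms (2) and (3) of Definition \ref{Expansion of a Lie algebra} together to obtain a sharp description of the brackets that actually contribute to each graded piece $L_i$, after which the spanning claim follows by an easy induction.

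For the isomorphism $\psi|_{L_1}: L_1 \isomto V_{[n]}$, I would argue as follows. Viewing $V_{[n]}$ as a graded Lie algebra concentrated in degree $1$, surjectivity of the graded morphism $\psi: L_{\bullet} \to V_{[n]}$ forces $\psi(L_i)=0$ for $i \neq 1$ and hence forces $\psi|_{L_1}: L_1 \to V_{[n]}$ to be surjective. For injectivity, note that by axiom (3) we have $\ker(\psi) = [L_{\bullet}, L_{\bullet}]$, which is a graded subspace whose degree-$1$ component is zero because any bracket of homogeneous elements has degree $\geq 2$ (recall that, by convention, the grading starts in degree $1$). Hence $\ker(\psi) \cap L_1 = 0$ and $\psi|_{L_1}$ is injective.

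For the spanning statement I would argue by induction on $i$. The case $i=1$ is trivial. Given $i \geq 2$, axiom (3) implies
\[
L_i = \ker(\psi) \cap L_i = \left([L_{\bullet}, L_{\bullet}]\right)_i = \sum_{\substack{a + b = i \\ a, b \geq 1}} [L_a, L_b],
\]
where the subscript $i$ denotes the $i$-th graded piece. The key observation, which is the one non-routine step, is that axiom (2) says $\ker(\psi)$ is abelian, and $\ker(\psi) = \bigoplus_{a \geq 2} L_a$; hence $[L_a, L_b] = 0$ whenever both $a, b \geq 2$. Combining with the alternating identity $[L_{i-1}, L_1] = [L_1, L_{i-1}]$, only one summand survives and we obtain $L_i = [L_1, L_{i-1}]$.

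From here the induction closes immediately: by the inductive hypothesis $L_{i-1}$ is spanned by the $(i-1)$-fold left-nested brackets $[\sigma_2, [\sigma_3, [\dots, [\sigma_{i-1}, \sigma_i]\dots]]]$ with each $\sigma_j \in L_1$, and bracketing on the left with an arbitrary $\sigma_1 \in L_1$ produces a spanning set of $L_i$ of exactly the required shape. I do not expect a genuine obstacle here; the only conceptual point is noticing that axiom (2) collapses the na\"\i ve decomposition $L_i = \sum_{a+b=i}[L_a,L_b]$ to the single summand $[L_1, L_{i-1}]$, which avoids any recourse to Jacobi gymnastics.
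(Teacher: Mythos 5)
Your proposal is correct and follows essentially the same route as the paper's proof: both reduce $L_i$ (for $i\geq 2$) to brackets of homogeneous elements, use axiom (2) to kill $[L_a,L_b]$ with $a,b\geq 2$ together with the $\mathbb{F}_2$-symmetry of the bracket to isolate $[L_1,L_{i-1}]$, and close by induction, while the isomorphism $\psi|_{L_1}\colon L_1 \isomto V_{[n]}$ is obtained exactly as you do from gradedness and $\ker(\psi)=[L_\bullet,L_\bullet]\subseteq\bigoplus_{i\geq 2}L_i$. The only difference is presentational (you state the decomposition $L_i=\sum_{a+b=i}[L_a,L_b]$ explicitly and prove the isomorphism first), not mathematical.
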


\begin{proof}
We show this by induction on $i$. For $i = 1$ the statement is vacuously true. Assuming that the statement holds for $i$, we shall prove it for $i + 1$. Firstly, we claim that $L_{i + 1}$ is spanned by commutators of the form $[\sigma, \tau]$ with both $\sigma$ and $\tau$ homogeneous elements. 

Indeed, since $L_{\bullet}$ is an $n$-expansion it follows that $L_{i + 1}$ is in $[L_{\bullet}, L_{\bullet}]$. Hence every element of $L_{i + 1}$ will be in the span of commutators. We now expand every entry of every commutator in its homogeneous components. In this way we obtain, after using the bi-linearity of $[-, -]$, a linear combination of commutators with homogeneous entries. By definition of a grading, the terms with total degree different from $i + 1$ must sum to $0$ in order to land in $L_{i + 1}$. 

Next, since $L_{\bullet}$, is an $n$-expansion, it follows from axiom $(2)$ and axiom $(3)$ that at least one of the elements $\sigma,\tau$ above can be assumed to be in $L_1$; otherwise $[\sigma,\tau]$ is $0$. Since $L_{\bullet}$ is a vector space over $\mathbb{F}_2$, we have
\[
[\sigma, \tau] = [\tau, \sigma],
\]
so we can assume that $\sigma$ is of degree $1$. Therefore $\tau$ has degree precisely equal to $i$, since the sum of the degrees is $i + 1$. Hence by the inductive assumption we can rewrite $\tau$ as linear combination of nested commutators with entries purely of degree $1$. After expanding with multi-linearity, we obtain that $[\sigma, \tau]$ can be rewritten as linear combination of nested commutators with all entries purely of degree $1$. We conclude that $L_{i + 1}$ is spanned by such elements, which gives precisely the first part of the proposition.

We next observe that the homomorphism $\psi$ preserves the grading. Therefore it must send, by definition, all the $L_i$ with $i \geq 2$ to $0$. Since $\psi$ is surjective, $\psi|_{L_1}$ is also surjective. We claim that it is also injective. Indeed thanks to axiom $(3)$ we know in particular that $\text{ker}(\psi) \subseteq [L_{\bullet},L_{\bullet}]$. But, since our gradings start from the index $1$, we must have $[L_{\bullet}, L_{\bullet}] \subseteq \oplus_{i \in \mathbb{Z}_{\geq 2}} L_i$. This tells us that $\psi$ restricted to $L_1$ is an injective map. In total we have shown that $\psi|_{L_1}$ is an isomorphism. 
\end{proof}

Due to Proposition \ref{All comes from L_1} we see that an $n$-expansion Lie algebra $(L_{\bullet}, \psi)$ naturally comes with a \emph{surjective} homomorphism
$$
\psi_i: (V_{[n]})^{\otimes i} \to L_i
$$
for every $i \in \mathbb{Z}_{\geq 1}$ defined by
$$
v_1 \otimes \dots .\otimes v_i \mapsto [\psi|_{L_1}^{-1}(v_1), [\psi|_{L_1}^{-1}(v_2), [\dots, [\psi|_{L_1}^{-1}(v_{i - 1}), \psi|_{L_1}^{-1}(v_i)] \dots ]]].
$$ 
For $i=1$ this is the identification $\psi|_{L_1}^{-1}$ between $V_{[n]}$ and $L_1$. Therefore we have a natural injective map for each $i$ in $\mathbb{Z}_{\geq 1}$
$$
\psi_i^{\vee} : L_i^{\vee} \to \text{Multi}(V_{[n]}, i).
$$

\begin{proposition}
\label{expansion bracket give consistent tensors}
We have the inclusion
$$
\emph{Im}(\psi_i^{\vee}) \subseteq \emph{Cons}(V_{[n]}, i).
$$
\end{proposition}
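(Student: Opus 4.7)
My approach is by induction on $i$, with base cases $i=1,2,3$ handled directly and a general inductive step for $i \geq 4$ mirroring the recursive shape of the definition of $\text{Cons}(V_{[n]},i)$. Given $f \in L_i^{\vee}$, set $b := \psi_i^{\vee}(f)$, so that for $\sigma_1,\dots,\sigma_i \in V_{[n]}$,
$$
b(\sigma_1,\dots,\sigma_i) = f\bigl([\widetilde{\sigma_1},[\widetilde{\sigma_2},[\dots,[\widetilde{\sigma_{i-1}},\widetilde{\sigma_i}]\dots]]]\bigr),
$$
where $\widetilde{\sigma_h}$ denotes the unique lift of $\sigma_h$ to $L_1$ via $\psi|_{L_1}^{-1}$. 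All defining relations of $\text{Cons}$ are to be derived from axioms (1)--(4) of an $n$-expansion Lie algebra applied to this nested commutator.

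The case $i=1$ is immediate since $\text{Cons}(V_{[n]},1) = \text{Multi}(V_{[n]},1)$. For $i=2$, the identity $[x,x]=0$ combined with bi-linearity of the bracket forces $b(\sigma_1,\sigma_2)+b(\sigma_2,\sigma_1)=0$ and $b(e_j,e_j)=0$, hence $b \in \text{Sym}(V_{[n]},2)$. For $i=3$, the Hall--Witt relation is precisely the Jacobi identity seen through $f$; the condition $P(V_{[n]},3)(b) \in \prod_{j}\text{Sym}$ is the symmetry of the inner bracket $[\sigma_2,\sigma_3]$ over $\mathbb{F}_2$; and membership in $\widetilde{\text{Multi}}(V_{[n]},3)$ is checked case by case, with the only non-trivial situation being $\sigma_1=\sigma_3=e_h$, rewritten as $[e_h,[\sigma_2,e_h]]=[e_h,[e_h,\sigma_2]]=0$ using the second clause of axiom (4).

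For the inductive step $i \geq 4$, I would verify the three defining conditions of $\text{Cons}(V_{[n]}, i)$ one by one. The Commutativity equations follow at once from the first clause of axiom (4), which states that the nested commutator is invariant under permutations of its first $i-2$ arguments. For $\widetilde{\text{Multi}}$ membership, I would split into cases depending on the positions $h \neq k$ at which the repeated basis vector $\sigma_h=\sigma_k=e_j$ appears: when $h,k \in [i-2]$, axiom (4) gives vanishing directly; when $\{h,k\}=\{i-1,i\}$, the innermost bracket $[e_j,e_j]$ already vanishes; and in the mixed case $h \leq i-2$, $k \in \{i-1,i\}$, I would first apply the Commutativity/permutation clause of axiom (4) to move the repeated index to position $i-2$, and then conclude by the identity $[e_j,[e_j,\tau]]=0$ from the second clause of axiom (4), using bracket-symmetry in the $k=i$ sub-case to swap $\sigma_{i-1}$ and $\sigma_i$ first. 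To show finally that $P(V_{[n]},i)(b) \in \prod_{j}\text{Cons}(V_{[n]-\{j\}},i-1)$, I would introduce, for each $j$, the linear functional $\widetilde{f}_j : L_{i-1} \to \mathbb{F}_2$ given by $\widetilde{f}_j(x) := f([e_j,x])$. A direct unwinding yields $b(e_j,\sigma_2,\dots,\sigma_i) = \psi_{i-1}^{\vee}(\widetilde{f}_j)(\sigma_2,\dots,\sigma_i)$, so by the inductive hypothesis $\psi_{i-1}^{\vee}(\widetilde{f}_j) \in \text{Cons}(V_{[n]},i-1)$; restriction to $V_{[n]-\{j\}}$ preserves $\text{Cons}$ because the defining relations are local equations involving only basis tuples of the underlying index set.

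The main obstacle, I expect, is the $\widetilde{\text{Multi}}$ condition for $i \geq 4$ in the mixed case above: this is the only place where one genuinely needs the second clause of axiom (4), which reflects at the level of Lie algebras the fact that the generators $g_i$ of an $n$-expansion group are involutions. Everything else distills cleanly from bi-linearity, the alternating property of the bracket, the Jacobi identity, and the invariance of the first $i-2$ entries under permutation.
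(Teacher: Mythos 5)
Your proposal is correct and follows essentially the same route as the paper's (much terser) proof: axiom (4) yields both the $\widetilde{\text{Multi}}$ membership and the Commutativity equations, bracket-symmetry over $\mathbb{F}_2$ yields Symmetry, and the Jacobi identity yields Hall--Witt. The only difference is that you make explicit the induction needed for the recursive condition $P(V_{[n]},i)(b) \in \prod_{j}\text{Cons}(V_{[n]-\{j\}},i-1)$ via the functionals $\widetilde{f}_j$, which the paper leaves implicit.
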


\begin{proof}
Axiom $(4)$ shows that the nested commutator will vanish if there is some $i \in [n]$ such that two entries are equal to $e_i$. We deduce that 
$$
\text{Im}(\psi_i^{\vee}) \subseteq \widetilde{\text{Multi}}(V_{[n]}, i). 
$$
Next observe that the Symmetry equation and the Hall--Witt equation are automatically guaranteed by the fact that the bracket is symmetric (we have algebras over $\mathbb{F}_2$) and that the bracket satisfies the Jacobi identity. Finally, the Commutativity equation is guaranteed precisely by the first part of axiom $(4)$. 
\end{proof}

For every $i \in \mathbb{Z}_{\geq 1}$ we put 
$$
C([n], i) := \text{Cons}(V_{[n]}, i)^{\vee}.
$$
We put
$$
C([n], \bullet) := \bigoplus_{i \in \mathbb{Z}_{\geq 1}} C([n], i),
$$  
and we next turn $C([n], \bullet)$ into a graded Lie algebra. Firstly, we declare the bracket of any two graded pieces of degree at least $2$ to be $0$. So we need to define only the bracket between $C([n],1) = V_{[n]}$ and $C([n], i)$ for each $i \in \mathbb{Z}_{\geq 1}$. Let $\rho \in C([n], i)$, $j \in [n]$ and $\phi \in \text{Cons}(V_{[n]}, i + 1)$. Recall that $\phi(e_j, -)$ is in $\text{Cons}(V_{[n]-\{j\}}, i)$ and hence we can view it as an element of $\text{Cons}(V_{[n]}, i)$ by composing with the natural projection map. We then define 
$$
[e_j, \rho]_{[n]}(\phi):=\rho(\phi(e_j, -)).
$$
This defines $[,]_{[n]}$ by linearity. Altogether we have a natural map of graded vector spaces
$$
\psi_{\bullet} : C([n], \bullet) \to L_{\bullet}.
$$
Also we have a natural projection map $c_{[n]}:C([n], \bullet) \twoheadrightarrow V_{[n]}$. Before stating the next proposition we need a definition. Let $((L_{\bullet}, [,]), \psi)$ and $((L'_{\bullet}, [,]), \psi')$ be two $n$-expansion Lie algebras. A homomorphism of $n$-expansion Lie algebras
$$
f: L_{\bullet} \to L'_{\bullet},
$$
is a homomorphism of graded Lie algebras such that $\psi' \circ f=\psi$. Observe that Proposition \ref{All comes from L_1} easily implies that there is at most one homomorphism between any two $n$-expansion Lie algebras. Furthermore, if there is a homomorphism, then it must be an epimorphism. However, we shall add the adjective surjective precisely to stress this information. The following proposition shows that all $n$-expansion Lie algebras are canonically a quotient of $((C([n], \bullet), [,]_{[n]}), c_{[n]})$.
 
\begin{proposition} 
\label{Construction of universal algebra}
$\emph{(a)}$ The graded vector space $C([n], \bullet)$ equipped with $[,]_{[n]}$ is an $n+1$-nilpotent graded Lie algebra. The pair $((C([n], \bullet), [,]_{[n]}), c_{[n]})$ is an $n$-expansion Lie algebra. \\ $\emph{(b)}$ For every $n$-expansion Lie algebra $((L_{\bullet}, []), \psi)$ we have that 
$$
\psi_{\bullet}:C([n], \bullet) \twoheadrightarrow L_{\bullet}
$$ is a surjective homomorphism of $n$-expansion Lie algebras.
\end{proposition}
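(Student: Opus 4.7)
The plan is to prove parts (a) and (b) in sequence. For (a) one checks that $(C([n], \bullet), [,]_{[n]})$ is a graded Lie algebra, that it is $(n+1)$-nilpotent, and that $((C([n], \bullet), [,]_{[n]}), c_{[n]})$ satisfies axioms $(1)$--$(4)$ of Definition \ref{Expansion of a Lie algebra}; for (b) one realises $\psi_\bullet$ as the dual of the inclusion $\psi_i^\vee : L_i^\vee \hookrightarrow \text{Cons}(V_{[n]}, i)$ provided by Proposition \ref{expansion bracket give consistent tensors}, then checks that it preserves brackets and is compatible with $c_{[n]}$ and $\psi$.

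For (a), bilinearity and the grading of $[,]_{[n]}$ are immediate. The crucial preliminary observation is the identity
\[
[\sigma_1, [\sigma_2, [\ldots, [\sigma_{i-1}, \sigma_i]\ldots]]]_{[n]}(\phi) = \phi(\sigma_1, \ldots, \sigma_i)
\]
for $\phi \in \text{Cons}(V_{[n]}, i)$, obtained by induction on $i$ directly from $[e_j, \rho]_{[n]}(\phi) = \rho(\phi(e_j, -))$. In characteristic $2$ the non-trivial part of antisymmetry is $[v,v]_{[n]} = 0$; expanding $v \in V_{[n]}$ on the basis reduces it to $\phi(v,v) = 0$ for $\phi \in \text{Sym}(V_{[n]}, 2)$, which follows from $\phi \in \widetilde{\text{Multi}}$ and the Symmetry equation. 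The Jacobi identity is non-trivial only for degree tuples $(1,1,1)$, where it is the Hall--Witt equation in $\text{Cons}(V_{[n]}, 3)$, and $(1,1,d)$ with $d \geq 2$, where the third summand vanishes (it pairs two elements of degree $\geq 2$) and the remaining two collapse, via the identity above, to the Commutativity axiom for $\text{Cons}(V_{[n]}, d+2)$. The $(n+1)$-nilpotency is immediate from Proposition \ref{Consistent tensors are governing tensors}: $\text{Cons}(V_{[n]}, m) = \text{Gov}(V_{[n]}, m) = 0$ for $m > n$. Among the expansion axioms, $(1)$ and $(2)$ are direct from the definitions; $(3)$ reduces dually to injectivity of $\text{Cons}(V_{[n]}, i) \to V_{[n]}^\vee \otimes \text{Cons}(V_{[n]}, i-1)$, $\phi \mapsto \sum_j \chi_j \otimes \phi(e_j, -)$, which holds because a multilinear form is determined by its values on basis tuples; $(4)$, via the nested-commutator identity, is exactly the conjunction of iterated Commutativity (yielding symmetric-group invariance on the first $i-2$ entries), the vanishing-on-repeats of $\widetilde{\text{Multi}}$ (extended to arbitrary coincident entries in degree $1$ by the same char-$2$ argument used for antisymmetry), and $\phi(\tau_1, \tau_1, \tau_2) = 0$ for $\tau_1$ a basis vector and $\phi \in \widetilde{\text{Multi}}(V_{[n]}, 3)$.

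For (b), surjectivity of $\psi_\bullet$ on each graded piece is automatic from the injectivity of $\psi_i^\vee$, itself a consequence of the surjectivity $\psi_i : V_{[n]}^{\otimes i} \twoheadrightarrow L_i$ afforded by Proposition \ref{All comes from L_1}; compatibility with $c_{[n]}$ is on-the-nose, since $\psi_\bullet$ restricts to $\psi|_{L_1}^{-1}$ on $V_{[n]}$ and vanishes in higher degree. The only content is preservation of brackets, which by the grading reduces to verifying $\psi_\bullet([e_j, \beta]_{[n]}) = [\psi_\bullet(e_j), \psi_\bullet(\beta)]_L$ for $\beta \in C([n], d)$ and $j \in [n]$. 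Pairing both sides with an arbitrary $f \in L_{d+1}^\vee$ and introducing $g(y) := f([\psi|_{L_1}^{-1}(e_j), y]_L) \in L_d^\vee$, the definition of $\psi_i^\vee$ yields $\psi_d^\vee(g) = \psi_{d+1}^\vee(f)(e_j, -)$, so both sides pair with $f$ to give $\beta(\psi_{d+1}^\vee(f)(e_j, -))$.

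The anticipated main obstacle is the Jacobi identity in the mixed degree $(1,1,d)$ case of (a): it is the only place where the Commutativity axiom built into $\text{Cons}(V_{[n]}, i)$ for $i \geq 4$ is essentially invoked, and it is precisely this axiom that makes $C([n], \bullet)$ rigid enough for the universality in (b) to be meaningful. Once the nested-commutator identity and this Jacobi check are in hand, all remaining verifications are formal.
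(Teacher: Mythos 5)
Your proof is correct and follows exactly the route the paper intends: the paper dismisses part (a) as straightforward and derives part (b) by dualizing the inclusion $\operatorname{Im}(\psi_i^{\vee}) \subseteq \text{Cons}(V_{[n]}, i)$ from Proposition \ref{expansion bracket give consistent tensors}, which is precisely what you do. Your write-up simply supplies the omitted verifications — the nested-commutator evaluation identity, the identification of the Jacobi identity in degrees $(1,1,1)$ and $(1,1,d)$ with the Hall--Witt and Commutativity equations respectively, and the duality computation for bracket preservation — all of which are accurate.
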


\begin{proof}
Part $(a)$ is straightforward. Part $(b)$ follows from Proposition \ref{expansion bracket give consistent tensors}.
\end{proof}

Our next goal is to turn Proposition \ref{Consistent tensors are governing tensors} into a more explicit presentation of the algebra $((C([n], \bullet), [,]_{[n]}), c_{[n]})$ and hence of every other $n$-expansion Lie algebra, by applying Proposition \ref{Construction of universal algebra}. Indeed observe that Proposition \ref{Consistent tensors are governing tensors} already provides us with the non-trivial result that 
$$
\text{dim}_{\mathbb{F}_2}C([n], \bullet)=n2^{n - 1}-2^n+n+1.
$$
In particular it gives us an upper bound for the size of any $n$-expansion Lie algebra and furthermore it shows that its nilpotency class is always at most $n+1$, while a priori the definition trivially guarantees only nilpotency class $\omega$. Such a bound follows from the fact that it is very easy to count the dimension of the spaces $\text{Gov}(\mathbb{F}_2^{[n]}, i)$: the work, done in the proof of Proposition \ref{Consistent tensors are governing tensors}, is in proving that this subspace of $\text{Cons}(\mathbb{F}_2^{[n]}, i)$ is the full space. Hence, our next step will be to put a natural structure of $n$-expansion Lie algebra on the graded vector space over $\mathbb{F}_2$ given by
$$
\text{G}([n], \bullet) := \bigoplus_{i \in \mathbb{Z}_{\geq 1}} \text{G}([n], i),
$$
where $\text{G}([n], i):=\text{Gov}(\mathbb{F}_2^{[n]}, i)^{\vee}$. We see that $\text{G}([n],1)$ can naturally be identified with $V_{[n]}$ and for $i \geq 2$ we have that $\text{G}([n], i)$ can naturally be identified as a subspace 
$$
\text{G}([n], i) \subseteq \mathbb{F}_2^{{(A, x)}_{A \subseteq [n], \#A = i, x \in A}}
$$
of the space of formal $\mathbb{F}_2$-linear combinations of pointed subsets of $[n]$ with cardinality $i$, defined as follows
$$
\text{G}([n], i) := \left\{\sum_{\substack{(A, x) \\ A \subseteq [n], \#A = i, x \in A}} \hspace{-0.5cm} \lambda_{(A, x)}e_{(A, x)}: \sum_{x \in A} \lambda_{(A, x)}=0 \text{ for all } A \subseteq [n] \text{ with } \#A = i \right\}.
$$
For each non-empty $A \subseteq [n]$ and for each $x, y$ in $A$, we have that $e_{(A, x)} + e_{(A, y)}$ is in $\text{G}([n], \#A)$. Furthermore, such elements together with the $e_j$, with $j \in [n]$ form a generating set for $\text{G}([n], \bullet)$. In other words we see that we can rewrite
$$
\text{G}([n], \bullet) = \bigoplus_{\emptyset \neq A \subseteq [n]} W([n], A),
$$
where $W([n], A)$ is the span of the elements $e_{(A,x)}+e_{(A,y)}$ with $x,y$ in $A$ in case $\#A \geq 2$. To extract a basis of $W([n], A)$ it is enough to choose $\#A - 1$ subsets $C_1(A), \dots, C_{\#A - 1}(A)$ of $A$ with size $2$ whose union is the whole $A$. Then a basis is given by the elements $e_{(A, x)} + e_{(A, y)}$ with $x, y$ in $C_i(A)$. Instead $W([n],\{x\})$ is the span of $e_x$, for each $x \in [n]$. In what follows we shall also use the notation $e_{(\{x\},x)}:=e_x$ for each $x \in [n]$.

We define a bracket on $G([n], \bullet)$ in the following manner. Firstly, we declare that the bracket of a graded piece of degree at least $2$ must be $0$, so that we need to define only the bracket between $\text{G}([n],1)=V_{[n]}$ and $\text{G}([n], i)$ for any $i$ in $\mathbb{Z}_{\geq 1}$. For $j \in [n]$ and $A$ containing $j$, we put $[e_j,W([n], A)] =0$. Suppose instead $j \not \in A$ and $A$ not empty. Then there is an operator $T_j$ from $W([n], A)$ to $W([n], A \cup \{j\})$ defined by the following assignment
$$
\sum_{x \in B} e_{(A, x)} \mapsto \sum_{x \in B}e_{(A \cup \{j\},x)}, 
$$
given for each fixed $B \subseteq A$ with $\#B$ even. We put $[e_j,]|_{W([n], A)} = T_j$. Observe that we have a natural projection map
$$
g_{[n]}: \text{G}([n], \bullet) \twoheadrightarrow V_{[n]}.
$$

\begin{theorem} \label{Governing algebra is the universal algebra}
\emph{(a)} The graded vector space $\emph{G}([n], \bullet)$ equipped with $[,]_{[n]}$ is a graded Lie algebra. The pair $((\emph{G}([n], \bullet), [,]_{[n]}), g_{[n]})$ is an $n$-expansion Lie algebra. \\
\emph{(b)} The surjective homomorphism $(g_{[n]})_{\bullet}$ induces an isomorphism
$$
((\emph{C}([n], \bullet), [,]_{[n]}), c_{[n]}) \simeq_{n\emph{-exp. Lie-alg}} ((\emph{G}([n], \bullet), [,]_{[n]}), g_{[n]}).
$$
Every $n$-expansion Lie algebra $((L_{\bullet}, []), \psi)$ admits a natural epimorphism 
$$
\psi_{\bullet}: ((\emph{G}([n], \bullet), [,]_{[n]}), c_{[n]}) \twoheadrightarrow ((L_{\bullet}, []), \psi).
$$
In particular $\emph{dim}_{\mathbb{F}_2}(L_{\bullet}) \leq n2^{n - 1}-2^n+1+n$ and one has equality if and only if $\psi_{\bullet}$ is an isomorphism. 
\end{theorem}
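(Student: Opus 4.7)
The plan is to prove the two parts in sequence: part (a) by a direct verification that the explicit data on $\text{G}([n], \bullet)$ fits Definition \ref{Expansion of a Lie algebra}, and part (b) by a short dimension count that upgrades the canonical surjection of Proposition \ref{Construction of universal algebra} to an isomorphism.

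For part (a), the well-definedness of each operator $T_j : W([n], A) \to W([n], A \cup \{j\})$ is immediate, since the shifting map preserves the evenness of the coefficient pattern that cuts out $W([n], \cdot)$. Anti-symmetry of $[,]_{[n]}$ is automatic over $\mathbb{F}_2$. The bracket vanishes on any pair of elements both of degree $\geq 2$, so the Jacobi identity is trivial except when all three arguments lie in degree $1$; in that case a direct unwinding shows that $[[e_i, e_j], e_k]$ equals $T_k(T_i(e_j))$ when $i, j, k$ are pairwise distinct (and zero otherwise), and the three cyclic summands cancel pairwise because $T_i$ and $T_k$ commute on $W([n], \{j\})$. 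Axioms (1)--(3) of Definition \ref{Expansion of a Lie algebra} are transparent from the construction of $g_{[n]}$ as the projection to the degree-$1$ piece. For axiom (4), the key observation is that the nested bracket $[e_{j_1}, [e_{j_2}, [\ldots, [e_{j_{i-1}}, e_{j_i}]\ldots]]]$ unwinds to $T_{j_1} \circ T_{j_2} \circ \cdots \circ T_{j_{i-2}}$ applied to $[e_{j_{i-1}}, e_{j_i}] \in W([n], \{j_{i-1}, j_i\})$; pairwise commutativity of the $T_j$'s yields symmetry in the first $i - 2$ entries, while the convention that $T_j$ is zero on $W([n], A)$ whenever $j \in A$ yields both the vanishing on repeated entries and the special identity $[\tau_1, [\tau_1, \tau_2]] = 0$.

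For part (b), once (a) is in hand, Proposition \ref{Construction of universal algebra}(b) produces the canonical surjective homomorphism $(g_{[n]})_\bullet : \text{C}([n], \bullet) \twoheadrightarrow \text{G}([n], \bullet)$ of $n$-expansion Lie algebras. Dualising the equality $\text{Cons}(V_{[n]}, i) = \text{Gov}(V_{[n]}, i)$ of Proposition \ref{Consistent tensors are governing tensors} gives $\dim_{\mathbb{F}_2} \text{C}([n], i) = \dim_{\mathbb{F}_2} \text{G}([n], i)$ in every degree, so this surjection between finite-dimensional spaces of equal dimension is forced to be an isomorphism. For an arbitrary $n$-expansion Lie algebra $(L_\bullet, \psi)$, Proposition \ref{Construction of universal algebra}(b) provides $\psi_\bullet : \text{C}([n], \bullet) \twoheadrightarrow L_\bullet$; composing with $((g_{[n]})_\bullet)^{-1}$ yields the claimed epimorphism from $\text{G}([n], \bullet)$ onto $L_\bullet$. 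The dimension bound $\dim_{\mathbb{F}_2}(L_\bullet) \leq n 2^{n-1} - 2^n + 1 + n$ follows by adding $\dim \text{G}([n], 1) = n$ to the identity $\sum_{i \geq 2}(i - 1)\binom{n}{i} = n 2^{n-1} - 2^n + 1$ supplied by Proposition \ref{counting}, and equality forces the surjection to be bijective.

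I expect the main technical nuisance to be the verification of axiom (4) in part (a): tracing the combinatorics of nested brackets through the shifting operators $T_j$ and confirming that their pairwise commutativity together with the vanishing property $T_j|_{W([n], A)} = 0$ for $j \in A$ deliver both the symmetry of the first $i - 2$ entries and the desired vanishing statements at arbitrary nesting depth. Once that is done, part (b) is essentially a bookkeeping exercise fed by the heavy combinatorial calculation already carried out in Proposition \ref{Consistent tensors are governing tensors}.
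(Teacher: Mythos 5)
Your proof is correct and follows the paper's route exactly: the paper disposes of (a) as ``a straightforward verification'' and of (b) by combining Proposition \ref{Consistent tensors are governing tensors} with Proposition \ref{Construction of universal algebra}, which is precisely your argument of pushing the canonical surjection from $C([n],\bullet)$ against the equality of dimensions. The one small slip is in (a): the Jacobi identity also has a nontrivial case with exactly one argument of degree $\geq 2$ and two of degree $1$, but that case reduces to the pairwise commutativity of the operators $T_j$ on all of $\mathrm{G}([n],\bullet)$, which you already invoke for axiom (4).
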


\begin{proof}
Part (a) is a straightforward verification. Part (b) directly follows upon combining Proposition \ref{Consistent tensors are governing tensors} and Proposition \ref{Construction of universal algebra}.
\end{proof}

We remark that the abuse of notation for the bracket $[,]_{[n]}$ and for the map $\psi_{\bullet}$ shall cause no confusion since $((\emph{C}([n], \bullet), [,]_{[n]}), c_{[n]})$ and $((\emph{G}([n], \bullet), [,]_{[n]}), g_{[n]})$ are canonically identified. We call this algebra the $n$-\emph{governing Lie algebra}.

We shall see that Theorem \ref{Governing algebra is the universal algebra} is the crucial tool to establish the analoguous result for $n$-expansion groups. There is a universal $n$-expansion group of size $2^{n2^{n - 1}-2^n+1+n}$. In our reduction to Lie algebras it will be useful to know in advance that any $n$-expansion group is finite with size equal to the size of its Lie algebra. In order to achieve that we firstly prove the following proposition. Recall that if $(G, \phi, (g_1, \dots, g_n))$ is an $n$-expansion group, then axioms $(1),(2)$ and $(3)$ of Definition \ref{Expansion of groups} imply that $[G, G]$ is a module over the ring $\mathbb{F}_2[\frac{G}{[G, G]}]$, and so is any vector sub-space of $[G, G]$ that is normal in $G$. We denote by $I_{\frac{G}{[G, G]}}$ the augmentation ideal of $\mathbb{F}_2[\frac{G}{[G, G]}]$.

\begin{proposition} 
\label{Augmentation=descending central}
Let $(G, \phi, (g_1, \dots, g_n))$ be an $n$-expansion group and let $i \in \mathbb{Z}_{\geq 2}$. Then
$$
I_{\frac{G}{[G, G]}}^{i - 2} \cdot G^{(2)} = G^{(i)}.
$$
\end{proposition}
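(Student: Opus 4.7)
The plan is to prove both inclusions by induction on $i$, after first setting up the correct dictionary between group commutators and the module action of the augmentation ideal. The key input is that axioms $(1)$, $(2)$, $(3)$ of Definition \ref{Expansion of groups} imply that $[G,G]$ is an abelian group of exponent $2$ on which $G$ acts by conjugation, and this action factors through $G/[G,G]$, turning $[G,G]$ into an $\mathbb{F}_2[G/[G,G]]$-module. Writing the module action additively, for $g \in G$ and $h \in [G,G]$ we have the basic identity
\[
[g,h] \;=\; ghg^{-1}h^{-1} \;=\; (\bar g - 1)\cdot h,
\]
where $\bar g$ denotes the image of $g$ in $G/[G,G]$ and $\bar g - 1 \in I_{G/[G,G]}$. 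This is the only computational ingredient needed.

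For the base case $i=2$ the statement reduces to $G^{(2)} = G^{(2)}$ since $I^{0}_{G/[G,G]} = \mathbb{F}_2[G/[G,G]]$ acts by the identity on $G^{(2)}$ (as $G^{(2)}$ is already a module). For the inductive step, assume $G^{(i)} = I^{i-2}_{G/[G,G]}\cdot G^{(2)}$. By the definition of the descending central series, $G^{(i+1)} = [G, G^{(i)}]$ is generated as a subgroup by commutators $[g, x]$ with $g \in G$ and $x \in G^{(i)} \subseteq [G,G]$. By the identity above, each such commutator equals $(\bar g - 1)\cdot x$, and as $x$ ranges over $I^{i-2}_{G/[G,G]}\cdot G^{(2)}$ the element $(\bar g - 1)\cdot x$ ranges over a spanning set of $I^{i-1}_{G/[G,G]}\cdot G^{(2)}$ (using that the $\bar g - 1$ generate the augmentation ideal). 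This gives both inclusions at once: on one hand every such $(\bar g-1)x$ lies in $I^{i-1}\cdot G^{(2)}$, yielding $G^{(i+1)} \subseteq I^{i-1}\cdot G^{(2)}$; on the other hand any element of $I^{i-1}\cdot G^{(2)}$ is an $\mathbb{F}_2$-linear combination of terms of the form $(\bar g_1 - 1)(\bar g_2 - 1)\cdots (\bar g_{i-1} - 1)\cdot h$ with $h \in G^{(2)}$, and unrolling the identity $i-1$ times rewrites this as the nested commutator
\[
[g_1,[g_2,[\dots,[g_{i-1},h]\dots]]] \;\in\; G^{(i+1)}.
\]

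I do not expect a real obstacle here; the proof is essentially a bookkeeping argument once the translation $[g,h] = (\bar g - 1)\cdot h$ is in place. The only point that deserves a moment of care is that the identity is being applied iteratively, so one must verify that at each step the inner element still belongs to $[G,G]$ (which is automatic since $G^{(i)} \subseteq G^{(2)} = [G,G]$ for $i \geq 2$), so that the module-theoretic reformulation continues to apply. This is exactly where axioms $(2)$ and $(3)$ from Definition \ref{Expansion of groups} are used. The conclusion is the desired equality $I^{i-2}_{G/[G,G]}\cdot G^{(2)} = G^{(i)}$ for all $i \geq 2$.
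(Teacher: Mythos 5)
Your proof is correct and follows essentially the same route as the paper's: both arguments rest on the identity $[g,h]=(1+\bar g)\cdot h$ for $h\in[G,G]$ (your $\bar g - 1$ equals $1+\bar g$ in characteristic $2$) together with induction on $i$, using that $G^{(i+1)}=[G,G^{(i)}]$ is spanned by such commutators and that the elements $1+\bar g$ span the augmentation ideal. No gaps.
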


\begin{proof}
For $i = 2$ this is trivial. We proceed by induction on $i$, so suppose that the statement holds for a given $i \in \mathbb{Z}_{\geq 2}$. By definition we have that $G^{(i + 1)}= [G, G^{(i)}]$. Hence it is spanned by elements of the form $[g_1, g_2]$ with $g_2 \in G^{(i)}$. Note that $[g_1, g_2] = (1 + g_1)g_2$, since $g_2$ is in particular in $[G, G]$. Therefore by the inductive assumption we get that $[g_1, g_2]$ equals $(1 + g_1)v$ where $v$ is a general element of $I_{\frac{G}{[G, G]}}^{i - 2} \cdot [G, G]$. As $g_1$ and $v$ vary this set spans precisely $I_{\frac{G}{[G, G]}}^{i - 1} \cdot [G, G]$.
\end{proof}

\begin{proposition} 
\label{n-expansion groups are finite}
Let $(G, \phi, (g_1, \dots, g_n))$ be an $n$-expansion group. Then 
$$
\#G=\#L_{\bullet}(G) \leq 2^{n2^{n - 1}-2^n+n+1}. 
$$
In particular, $\{g_1, \dots, g_n\}$ is a generating set for $G$.
\end{proposition}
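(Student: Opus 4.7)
The plan is to bound $\#G$ by bounding $\#L_{\bullet}(G)$ via the governing Lie algebra, the main obstacle being that a priori the descending central series of $G$ need not terminate at the identity in finitely many steps. First I would invoke Proposition~\ref{Lie algebra of an expansion is an expansion of a Lie algebra} to view $(L_{\bullet}(G), L_{\bullet}(\phi))$ as an $n$-expansion Lie algebra; Theorem~\ref{Governing algebra is the universal algebra} then supplies both a graded surjection $\text{G}([n], \bullet) \twoheadrightarrow L_{\bullet}(G)$ and the inequality $\dim_{\mathbb{F}_2} L_{\bullet}(G) \leq n \cdot 2^{n-1} - 2^n + n + 1$. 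Since the decomposition $\text{G}([n], \bullet) = \bigoplus_{\emptyset \neq A \subseteq [n]} W([n], A)$ places $W([n], A)$ in degree $\#A$, the algebra $\text{G}([n], \bullet)$ is concentrated in degrees $1$ through $n$, so the surjection forces $L_i(G) = G^{(i)}/G^{(i+1)} = 0$ for all $i \geq n+1$; equivalently, the descending central series stabilises in the sense that $G^{(n+1)} = G^{(n+2)} = \cdots$.

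The key next step, and the technical heart of the argument, is to upgrade this stabilisation into the vanishing $G^{(n+1)} = 1$. I would do this by exploiting the nilpotency of the augmentation ideal $I$ of $\mathbb{F}_2[G/[G,G]] = \mathbb{F}_2[\mathbb{F}_2^n]$: each generator $1 + g_i$ satisfies $(1 + g_i)^2 = 1 + g_i^2 = 0$, so any product of $n+1$ such generators must contain a repetition and therefore vanishes, giving $I^{n+1} = 0$. Proposition~\ref{Augmentation=descending central} then yields $G^{(n+3)} = I^{n+1} \cdot G^{(2)} = 1$, which combined with the stabilisation just obtained gives $G^{(n+1)} = G^{(n+3)} = 1$. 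Consequently $G$ has the finite series $G = G^{(1)} \supseteq \cdots \supseteq G^{(n+1)} = 1$ with elementary abelian $2$-quotients $L_i(G)$, and
\[
\#G \,=\, \prod_{i=1}^{n} \#L_i(G) \,=\, \#L_{\bullet}(G) \,\leq\, 2^{n \cdot 2^{n-1} - 2^n + n + 1}.
\]

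For the final assertion that $\{g_1, \dots, g_n\}$ generates $G$, set $H := \langle g_1, \dots, g_n \rangle$. By Proposition~\ref{All comes from L_1}, each vector space $L_i(G)$ is spanned by iterated Lie brackets of elements of $L_1(G)$, and the classes of $g_1, \dots, g_n$ form a basis of $L_1(G) \simeq V_{[n]}$ by axioms $(1)$ and $(3)$ of Definition~\ref{Expansion of groups}. Lifted back to $G$, this says that $G^{(i)}/G^{(i+1)}$ is generated by the cosets of iterated group commutators in the $g_j$'s, all of which lie in $H$; hence $G^{(i)} \subseteq H \cdot G^{(i+1)}$ for every $i \in [n]$. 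A reverse induction starting from $G^{(n+1)} = 1$ then gives $G^{(n)} \subseteq H$, next $G^{(n-1)} \subseteq H \cdot G^{(n)} \subseteq H$, and so on down to $G = G^{(1)} \subseteq H$, concluding the argument.
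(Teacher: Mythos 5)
Your proof is correct and follows essentially the same route as the paper: reduce to the Lie algebra via Proposition \ref{Lie algebra of an expansion is an expansion of a Lie algebra}, bound its size by Theorem \ref{Governing algebra is the universal algebra}, and kill the descending central series using $I_{\frac{G}{[G,G]}}^{n+1}=0$ together with Proposition \ref{Augmentation=descending central}. The only cosmetic differences are that you pin down the stabilisation index as $n+1$ via the grading of the governing algebra, where the paper works with an arbitrary index $i$ satisfying $L_i(G)=0$, and that you spell out the generation claim by reverse induction along the lower central series where the paper invokes the standard Frattini-type argument for a finite $2$-group.
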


\begin{proof}
Thanks to Proposition \ref{Lie algebra of an expansion is an expansion of a Lie algebra} and Theorem \ref{Governing algebra is the universal algebra} we get that $\#L_{\bullet}(G)$ is indeed finite and satisfies the claimed upper bound. In particular there exists a positive integer $i$ such that $L_i(G)=0$, which is equivalent to $G^{(i)} = G^{(i + 1)}$. Observe that $i > 1$ because $n$ is at least $1$. Hence we can apply Proposition \ref{Augmentation=descending central} twice to conclude that 
$$
I_{\frac{G}{[G, G]}} \cdot G^{(i)} = I_{\frac{G}{[G, G]}}^{i - 1} \cdot G^{(2)} = G^{(i + 1)} = G^{(i)}. 
$$
This implies that for any positive integer $h$ we have 
$$
I_{\frac{G}{[G, G]}}^{h} \cdot G^{(i)} = G^{(i)}. 
$$
On the other hand, by axiom $(1)$ and axiom $(3)$ from Definition \ref{Expansion of groups} we obtain that 
$$
I_{\frac{G}{[G, G]}}^{n+1} = 0.
$$ 
Therefore we conclude that $G^{(i)}$ is the trivial group and $\#G=\#L_{\bullet}(G)$. In particular it follows that $G$ is a finite $2$-group and the set $\{g_1, \dots, g_n\}$ is a generating set, modulo $[G,G]$, by axioms $(1)$ and axioms $(3)$ of Definition \ref{Expansion of groups}. It follows that $\{g_1, \ldots, g_n\}$ generates $G$.
\end{proof}

We next proceed to show that a universal expansion group must exist. Let $F_{[n]}$ be the free group on $n$ letters, which we denote as $x_1, \dots,x_n$. We denote by $N$ the subgroup generated by the squares of all elements in $[F_{[n]},F_{[n]}]$; this is a normal (actually characteristic) subgroup. We denote by $\widetilde{N}$ the largest normal subgroup containing $N$ and $\{x_1^2, \dots, x_n^2\}$. We put
$$
\mathcal{C}([n]) := \frac{F_{[n]}}{\widetilde{N}}.
$$ 
The map $x_i \mapsto e_i$ extends uniquely to a surjective epimorphism $F_{[n]} \twoheadrightarrow V_{[n]}$, which clearly factors through $\widetilde{N}$, yielding a surjective epimorphism
$$
\b{c}_{[n]} : \mathcal{C}([n]) \to V_{[n]}.
$$
We define a homomorphism of $n$-expansion groups 
$$
f: (G, \phi_1, (g_1, \dots, g_n)) \to (G', \phi_2, (g_1', \dots, g_n'))
$$
to be a group homomorphism from $G$ to $G'$ such that $f(g_i) = g'_i$. We observe that thanks to Proposition \ref{n-expansion groups are finite} it follows that among two $n$-expansion groups there is always at most one homomorphism, and if there is one then it must automatically be an epimorphism.

\begin{proposition} 
\label{trivial universal}
$(\mathcal{C}([n]), \b{c}_{[n]}, (x_1, \dots,x_n))$ is an $n$-expansion group. Furthermore, the unique homomorphism $\phi_{\bullet}: F_{[n]} \to G$ sending $x_i$ to $g_i$ yields an epimorphism of $n$-expansion groups
$$
\phi_{\bullet}: (\mathcal{C}([n]), \b{c}_{[n]}, (x_1, \dots,x_n)) \twoheadrightarrow (G, \phi, (g_1, \dots, g_n)).
$$
\end{proposition}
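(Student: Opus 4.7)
The plan is to verify the four axioms of Definition \ref{Expansion of groups} for the triple $(\mathcal{C}([n]), \b{c}_{[n]}, (x_1, \dots, x_n))$, and then derive the universal property by constructing the factorization directly using the axioms of an $n$-expansion group on the target. The verification is essentially formal; the only step that requires a small observation is turning ``every element has order at most $2$'' into ``the subgroup is an $\mathbb{F}_2$-vector space''.

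First I verify the four axioms for $\mathcal{C}([n])$. Axiom $(1)$ is immediate: the map $\b{c}_{[n]}$ is well-defined since $\widetilde{N}$ is contained in the kernel of the composition $F_{[n]} \twoheadrightarrow V_{[n]}$ (the generators $x_i^2$ and all squares of commutators map to $0$ in the $\mathbb{F}_2$-vector space $V_{[n]}$), and it sends $x_i$ to $e_i$. Axiom $(4)$ is immediate since $x_i^2 \in \widetilde{N}$. For axiom $(3)$, observe that the abelianization of $\mathcal{C}([n])$ coincides with $F_{[n]}^{\text{ab}}/\langle 2 e_i \rangle_{i \in [n]} \cong V_{[n]}$, so $\b{c}_{[n]}$ factors through $\mathcal{C}([n])^{\text{ab}}$ as an isomorphism, whence $\ker(\b{c}_{[n]}) = [\mathcal{C}([n]), \mathcal{C}([n])]$. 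For axiom $(2)$, every element $u \in [F_{[n]}, F_{[n]}]$ has $u^2 \in N \subseteq \widetilde{N}$, so every element of $[\mathcal{C}([n]), \mathcal{C}([n])]$ squares to the identity. A group in which every element has order at most $2$ is automatically abelian (from $(xy)^2 = x^2 = y^2 = \text{id}$ one deduces $xy = yx$), and hence is an $\mathbb{F}_2$-vector space, giving axiom $(2)$.

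For the second part, let $(G, \phi, (g_1, \dots, g_n))$ be an $n$-expansion group, and let $\phi_{\bullet}: F_{[n]} \to G$ be the unique group homomorphism sending $x_i \mapsto g_i$. I need to show $\phi_{\bullet}(\widetilde{N}) = \{\text{id}\}$, for then $\phi_{\bullet}$ factors through $\mathcal{C}([n])$. Since $\widetilde{N}$ is the normal closure of $N \cup \{x_1^2, \dots, x_n^2\}$, it suffices to check vanishing on these generators. On each $x_i^2$ this is axiom $(4)$ of Definition \ref{Expansion of groups} applied to $G$. For a square of a commutator $u \in [F_{[n]}, F_{[n]}]$, observe that $\phi_{\bullet}(u) \in [G, G] = \ker(\phi)$ by axiom $(3)$, and $\ker(\phi)$ is an $\mathbb{F}_2$-vector space by axiom $(2)$, so $\phi_{\bullet}(u)^2 = \text{id}$. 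Therefore $\phi_{\bullet}$ descends to a group homomorphism $\mathcal{C}([n]) \to G$ sending $x_i$ to $g_i$; compatibility $\phi \circ \phi_{\bullet} = \b{c}_{[n]}$ holds on the generating set $\{x_1, \dots, x_n\}$ and thus everywhere. Finally, surjectivity follows immediately from Proposition \ref{n-expansion groups are finite}, which guarantees that $\{g_1, \dots, g_n\}$ generates $G$.

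I do not expect any genuine obstacle here; the only subtlety is noticing that $[\mathcal{C}([n]), \mathcal{C}([n])]$ is automatically an elementary abelian $2$-group rather than just a group of exponent $2$, and that surjectivity of $\phi_{\bullet}$ is a non-trivial fact that requires Proposition \ref{n-expansion groups are finite} (which in turn rests on Theorem \ref{Governing algebra is the universal algebra} via Proposition \ref{Lie algebra of an expansion is an expansion of a Lie algebra}).
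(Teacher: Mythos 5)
Your proof is correct and follows essentially the same route as the paper: verify the four axioms directly (with the key point that squares of elements of $[F_{[n]},F_{[n]}]$ lie in $\widetilde{N}$, so the commutator subgroup of $\mathcal{C}([n])$ has exponent $2$ and is hence an $\mathbb{F}_2$-vector space), then factor the free-group homomorphism through $\widetilde{N}$ and invoke Proposition \ref{n-expansion groups are finite} for surjectivity. The only cosmetic differences are that you compute the abelianization explicitly for axiom $(3)$ where the paper counts generating involutions, and that you spell out the exponent-$2$-implies-abelian step which the paper leaves implicit.
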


\begin{proof}
Firstly, we verify that $(\mathcal{C}([n]), \b{c}_{[n]}, (x_1, \dots,x_n))$ is an $n$-expansion group. Clearly the classes of $x_i$ are involutions in $\mathcal{C}([n])$ by definition of $\widetilde{N}$. Therefore the group $\mathcal{C}([n])$ is generated by $n$ involutions and therefore any of its abelian quotients will be. It follows that its abelianization is a vector space of dimension at most $n$. On the other hand $\b{c}_{[n]}$ is a surjective homomorphism onto $V_{[n]}$. It follows that $\text{ker}(\b{c}_{[n]})$ must be equal to $[\mathcal{C}([n]), \mathcal{C}([n])]$. 

We claim that $([a_1, b_1] \cdot \ldots \cdot [a_m, b_m])^2$ is the identity if $a_j, b_j \in \mathcal{C}([n])$ for $1 \leq j \leq m$. Indeed, fix for every $j \in [m]$ elements $\widetilde{a}_j, \widetilde{b}_j$ of $F_{[n]}$ that reduce respectively to $a_j$ and $b_j$. Then the product $([\widetilde{a}_1, \widetilde{b}_1] \cdot \ldots \cdot [\widetilde{a}_m, \widetilde{b}_m])^2$ has class in $\mathcal{C}([n])$ equal to
$$
([a_1, b_1] \cdot \ldots \cdot [a_m, b_m])^2,
$$
and at the same time is in $\widetilde{N}$, by definition of $\widetilde{N}$. Therefore the claim holds and we have shown that the triple $(\mathcal{C}([n]), \b{c}_{[n]}, (x_1, \dots,x_n))$ is an $n$-expansion group.

The assignment $x_i \mapsto g_i$ for $i \in [n]$ certainly extends uniquely to a homomorphism from $F_n$ to $G$ thanks to the universal property of a free group. On the other hand since $G$ is an $n$-expansion group, it is clear that the kernel of such a homomorphism must contain $\widetilde{N}$. Hence it induces a homomorphism from $\mathcal{C}([n])$ to $G$. It clearly respects the structure of an $n$-expansion group. Therefore (as we explained above) it must automatically be surjective thanks to Proposition \ref{n-expansion groups are finite}. 
\end{proof}

To state the main results of this section we first introduce the analogues of $\text{G}([n], \bullet)$ for $n$-expansion groups. It will be convenient to introduce for every $i \in [n]$ a different structure of $n$-expansion group for the abstract group
$$
 \mathbb{F}_2[V_{[n - 1]}] \rtimes V_{[n - 1]}
$$
in the following way. We let $V_{[n] - \{i\}}$ act on $V_{\{A \subseteq [n]: i \in A\}}$ by setting
$$
e_j \cdot e_A :=
\left\{
	\begin{array}{ll}
		e_A + e_{A \cup \{j\}}  & \mbox{if } j \not \in A \\
		e_A & \mbox{if } j \in A.
	\end{array}
\right.
$$
Define
$$
G_i([n]) := V_{\{A \subseteq [n]: i \in A\}} \rtimes V_{[n] - \{i\}}.
$$
Define $g_{i, j} := (\text{id}, e_j)$ for $j \in [n] - \{i\}$ and $g_{i, i} := (\{i\}, \text{id})$. The group $G_i([n])$ has a natural projection map $\phi_i([n])$ onto $V_{[n]}$, which sends $g_{i, j}$ to $e_j$.

\begin{proposition} 
\label{single expansion group}
For every $i \in [n]$ we have that $(G_i([n]), \phi_i([n]), (g_{i, 1}, \dots, g_{i, n}))$ is an $n$-expansion group. 
\end{proposition}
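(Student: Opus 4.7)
The plan is to verify the four axioms of Definition \ref{Expansion of groups} directly. The one subtle point is defining the homomorphism $\phi_i([n])$: the projection to the $V_{[n]-\{i\}}$ factor of the semi-direct product alone would not account for the generator $g_{i,i}$. Using the decomposition $V_{[n]} = V_{[n]-\{i\}} \oplus \mathbb{F}_2 e_i$, the natural candidate is
$$
\phi_i([n])(a, b) := \lambda_{\{i\}}(a) \cdot e_i + b,
$$
where $\lambda_{\{i\}}: V_{\{A \subseteq [n] : i \in A\}} \to \mathbb{F}_2$ extracts the coefficient of $e_{\{i\}}$. Unraveling the definitions, $\phi_i([n])$ sends $g_{i, j} = (\id, e_j)$ to $e_j$ and $g_{i, i} = (e_{\{i\}}, \id)$ to $e_i$, so axiom $(1)$ will follow as soon as $\phi_i([n])$ is shown to be a homomorphism.

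The main task is to verify that $\phi_i([n])$ is a group homomorphism. From the multiplication $(a_1, b_1)(a_2, b_2) = (a_1 + b_1 \cdot a_2, b_1 + b_2)$, this reduces to the identity $\lambda_{\{i\}}(b \cdot a) = \lambda_{\{i\}}(a)$ for all $b \in V_{[n]-\{i\}}$ and $a \in V_{\{A : i \in A\}}$. Both sides are linear in $a$ and the action of $V_{[n]-\{i\}}$ is obtained by iterating the actions of single basis vectors $e_j$, so it suffices to verify $\lambda_{\{i\}}(e_j \cdot e_A) = \lambda_{\{i\}}(e_A)$ for $j \neq i$ and $A \ni i$. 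For $A = \{i\}$ one has $e_j \cdot e_{\{i\}} = e_{\{i\}} + e_{\{i, j\}}$, whose $e_{\{i\}}$-coefficient is $1 = \lambda_{\{i\}}(e_{\{i\}})$; for $\#A \geq 2$ the action produces $e_A$ or $e_A + e_{A \cup \{j\}}$, all of whose summands are indexed by sets of size $\geq 2$ and therefore have vanishing $e_{\{i\}}$-coefficient.

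The remaining axioms are routine. Axiom $(4)$ is a direct calculation in the semi-direct product: $g_{i, j}^2 = (\id, e_j + e_j) = \id$ for $j \neq i$, and $g_{i, i}^2 = (e_{\{i\}} + e_{\{i\}}, \id) = \id$. For axiom $(2)$, $\ker(\phi_i([n]))$ equals the $\mathbb{F}_2$-subspace $\{(a, \id) : \lambda_{\{i\}}(a) = 0\}$ of $V_{\{A : i \in A\}}$, inheriting its vector-space structure. For axiom $(3)$, a short computation gives $[g_{i, j}, g_{i, i}] = (e_{\{i, j\}}, \id)$ for $j \neq i$, and by induction, iterating such commutators with further $g_{i, k}$ (using $e_k \cdot e_A = e_A + e_{A \cup \{k\}}$ whenever $k \notin A$) produces every $(e_A, \id)$ with $A \ni i$; combined with the obvious generation of $\{\id\} \times V_{[n]-\{i\}}$ by the $g_{i, j}$ with $j \neq i$, this shows the $g_{i,j}$ generate $G_i([n])$, so the abelianization is a quotient of $\mathbb{F}_2^n$; since $\phi_i([n])$ surjects onto $V_{[n]} \cong \mathbb{F}_2^n$ and is abelian-valued, $\ker(\phi_i([n])) = [G_i([n]), G_i([n])]$ follows. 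The only mildly nontrivial step is the invariance of $\lambda_{\{i\}}$ under the action.
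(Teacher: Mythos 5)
Your proof is correct. It differs from the paper's in two respects worth noting. First, the paper's proof is a one-liner: it invokes the general identity $[A \rtimes G, A \rtimes G] = (I_G \cdot A) \rtimes G^{(2)}$ for a $G$-module $A$, observes that here $G = V_{[n]-\{i\}}$ is abelian so the commutator subgroup is $I_G \cdot A = \langle e_A : i \in A, \#A \geq 2\rangle$, and declares the remaining axioms clear. You instead establish axiom $(3)$ by exhibiting the nested commutators $(e_A, \id)$ explicitly and then running the abelianization-counting argument (generated by $n$ involutions, hence abelianization of order at most $2^n$, forced to equal $2^n$ by the surjection onto $V_{[n]}$) — the same device the paper itself uses in Propositions \ref{trivial universal} and \ref{Fibered products exists}. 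This buys you a self-contained argument at the cost of a short induction, and as a bonus shows that the $g_{i,j}$ generate $G_i([n])$. Second, and more substantively, you make explicit a point the paper leaves entirely implicit: the map $\phi_i([n])$ is not the bare projection onto the $V_{[n]-\{i\}}$ factor, and writing it as $(a,b) \mapsto \lambda_{\{i\}}(a)e_i + b$ requires checking that the $e_{\{i\}}$-coefficient is invariant under the $V_{[n]-\{i\}}$-action. Your verification of this (the new summands $e_{A \cup \{j\}}$ produced by the action are always indexed by sets of size at least $2$) is exactly the right observation and fills a genuine, if small, gap in the paper's exposition.
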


\begin{proof}
Recall that if $G$ is a group and $A$ is a $G$-module, then $[A \rtimes G, A \rtimes G] = I_G \cdot A \rtimes G^{(2)}$. This general fact gives us readily that axiom $(1)$ and axiom $(3)$ are respected. Axioms $(2)$ and $(4)$ are clearly satisfied. 
\end{proof}

We next introduce a general construction (a similar construction could have been introduced for Lie algebras).
 
\begin{definition} 
\label{Fibered products}
Let $(G, \phi, (g_1, \dots, g_n)), (G', \phi', (g'_1, \dots, g'_n))$ be $n$-expansion groups. We denote by $(G, \phi, (g_1, \dots, g_n)) \times (G', \phi', (g'_1, \dots, g'_n))$ the triple $(G'', \phi'', (g''_1, \dots, g''_n))$ defined in the following manner. The symbol $G''$ stands for the subgroup of $G \times G'$ generated by the pairs $(g_i, g'_i)$. This is a subgroup of the fibered product over $V_{[n]}$ (with respect to $\phi, \phi'$). Then $\phi''$ denotes the restriction to $G''$ of the natural map from the fibered product to $V_{[n]}$. Finally $g''_i$ denotes the element $(g_i, g_i')$ of $G''$. 
\end{definition}

\begin{proposition} 
\label{Fibered products exists}
Let $(G, \phi, (g_1, \dots, g_n))$ and $(G', \phi', (g'_1, \dots, g'_n))$ be two $n$-expansion groups. Then $(G, \phi, (g_1, \dots, g_n)) \times (G', \phi', (g'_1, \dots, g'_n))$  is an $n$-expansion group.
\end{proposition}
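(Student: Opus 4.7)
The plan is to verify the four axioms of Definition \ref{Expansion of groups} one at a time for the triple $(G'', \phi'', (g_1'', \dots, g_n''))$, where $G''$ is the subgroup of $G \times G'$ generated by the pairs $g_i'' := (g_i, g_i')$.

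First I would dispatch the easy axioms. Axiom $(1)$ is immediate: $G''$ is a subgroup of a group, $\phi''$ is the restriction of the projection from the fibered product, each $g_i''$ lies in $G''$ by construction, and $\phi''(g_i'') = e_i$ by the compatibility condition $\phi(g_i) = e_i = \phi'(g_i')$. Axiom $(4)$ is also immediate: since $g_i^2 = \mathrm{id}_G$ and $(g_i')^2 = \mathrm{id}_{G'}$ by axiom $(4)$ for $G$ and $G'$, we get $(g_i'')^2 = (\mathrm{id}_G, \mathrm{id}_{G'})$.

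Next I would handle axioms $(2)$ and $(3)$ together via an abelianization argument. On the one hand, $G''$ is by construction generated by the $n$ involutions $g_1'', \dots, g_n''$, so its abelianization $G''/[G'', G'']$ is generated by $n$ elements of order dividing $2$, hence has cardinality at most $2^n$. On the other hand, $\phi'' : G'' \to V_{[n]}$ is surjective (since its image contains the $e_i$) and factors through the abelianization, so $|G''/[G'', G'']| \geq 2^n$. Equality then forces $[G'', G''] = \ker(\phi'')$, which is axiom $(3)$, and simultaneously shows that $\ker(\phi'')$ is an elementary abelian $2$-group.

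Finally, for axiom $(2)$ I would observe that
\[
[G'', G''] \subseteq [G \times G', G \times G'] = [G, G] \times [G', G'],
\]
and the right-hand side is an $\mathbb{F}_2$-vector space because each factor is, by axioms $(2)$ and $(3)$ applied to $G$ and $G'$. So any subgroup of this product, in particular $\ker(\phi'') = [G'', G'']$, is itself an $\mathbb{F}_2$-vector space. The main obstacle, if any, is the equality $[G'', G''] = \ker(\phi'')$, but it is disposed of cleanly by the counting argument above since $G''$ is generated by the prescribed involutions rather than being the full fibered product.
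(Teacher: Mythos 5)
Your proof is correct and follows essentially the same route as the paper's: axioms $(1)$ and $(4)$ from the coordinatewise involutions, axiom $(3)$ by the counting argument that the abelianization of $G''$ is generated by $n$ involutions yet surjects onto $V_{[n]}$, and axiom $(2)$ from the containment $[G'', G''] \subseteq [G, G] \times [G', G']$. The only nitpick is the phrase claiming the counting argument ``simultaneously shows'' that $\ker(\phi'')$ is an elementary abelian $2$-group --- it does not by itself, but your final paragraph supplies the correct justification, exactly as the paper does.
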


\begin{proof}
Axiom $(1)$ and axiom $(4)$ of Definition \ref{Expansion of groups} hold because the elements $g''_i = (g_i, g'_i)$ are coordinatewise involutions mapping to $e_i$. Axiom $(2)$ holds because the commutator subgroup of $G''$ will certainly be a subgroup of $[G, G] \times [G', G']$. But by definition of an $n$-expansion both of them are vector spaces over $\mathbb{F}_2$ and therefore so is any subgroup of it. Hence axioms $(1),(2)$ and $(4)$ have been established. 

We now check axiom $(3)$. We observe that the group $G''$ is, by construction, generated by $n$ involutions, namely by the set $\{g''_i\}_{i \in [n]}$, therefore every abelian quotient of it is also generated by $n$ involutions and therefore must be a vector space over $\mathbb{F}_2$ of dimension at most $n$. Since, as we have shown, axiom $(1)$ holds and $\phi''$ is surjective, it follows that $\text{ker}(\phi'')$ must be the commutator subgroup of $G''$, since the quotient by this group is a vector space of dimension precisely equal to $n$. 
\end{proof}

Let us notice that the above product of $n$-expansion groups does also satisfy the categorical property of a product. Namely, if an $n$-expansion group surjects (as expansion group)  on both factors (with then a necessarily unique map of $n$-expansion groups) then it surjects (as expansion group) on the product (with then a necessarily unique map of $n$-expansion groups that necessarily commutes with the projections on the two factors, again by uniqueness). 

We now define 
$$
(\mathcal{G}([n]), \b{g}_{[n]}, (e_1, \dots, e_n)) := \prod_{i \in [n]} (G_i([n]), \phi_i([n]), (g_{i, 1}, \dots, g_{i, n})). 
$$
Thanks to Proposition \ref{single expansion group} and Proposition \ref{Fibered products exists} this is an $n$-expansion group. 
It is a simple verification that the Lie algebra attached to $\prod_{i \in [n]} (G_i([n]), \phi_i([n]),(g_{i, 1}, \dots, g_{i, n}))$ is precisely $((\emph{G}([n], \bullet), [,]_{[n]}), c_{[n]})$. One can see this quite easily by a direct calculation with nested commutators.

Alternatively, one can set up fibered product also for $n$-expansion Lie algebras and find that 
$((\emph{G}([n], \bullet), [,]_{[n]}), c_{[n]})$ is the fibered product of the Lie algebras of the various  $(G_i([n]), \phi_i([n]),(g_{i, 1}, \dots, g_{i, n}))$. Then one concludes by using the general fact that the Lie algebra of a product of two $n$-expansion groups is the product of the Lie algebras, where each product is taken in the respective categories.

Recall that Proposition \ref{trivial universal} provides for any $n$-expansion group the unique epimorphism of $n$-expansion groups $\phi_{\bullet}: (\mathcal{C}([n]), \b{c}_{[n]}, (x_1, \dots,x_n)) \twoheadrightarrow (G, \phi, (g_1, \dots, g_n))$.

\begin{theorem} 
\label{governing group is universal}
$(\mathcal{G}([n]), \b{g}_{[n]}, (e_1, \dots, e_n))$ is an $n$-expansion group and $(\b{g}_{[n]})_{\bullet}$ induces an isomorphism of $n$-expansion groups. Every $n$-expansion group $(G, \phi, (g_1, \dots, g_n))$ admits a unique epimorphism of $n$-expansion groups
$$
\phi_{\bullet} \circ (\b{g}_{[n]})_{\bullet}^{-1}: (\mathcal{G}([n]), \b{g}_{[n]}, (e_1, \dots, e_n)) \twoheadrightarrow (G, \phi, (g_1, \dots, g_n)).
$$
In particular, $\#G \leq 2^{n2^{n - 1}-2^n+n+1}$ with equality if and only if the map $\phi_{\bullet} \circ (\b{g}_{[n]})_{\bullet}^{-1}$ is an isomorphism. 
\end{theorem}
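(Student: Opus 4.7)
The fact that $(\mathcal{G}([n]), \b{g}_{[n]}, (e_1, \dots, e_n))$ is an $n$-expansion group follows immediately from Proposition~\ref{single expansion group} combined with Proposition~\ref{Fibered products exists}. The plan is then to show that the canonical epimorphism $(\b{g}_{[n]})_{\bullet}: \mathcal{C}([n]) \twoheadrightarrow \mathcal{G}([n])$ produced by Proposition~\ref{trivial universal} is in fact an isomorphism; once this is known, the universal property and the uniqueness of the resulting epimorphism follow formally, since between any two $n$-expansion groups there is at most one homomorphism (as noted just before Proposition~\ref{trivial universal}), and the dimension bound together with its equality case is then a direct consequence of Proposition~\ref{n-expansion groups are finite}.

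To see that $(\b{g}_{[n]})_{\bullet}$ is an isomorphism, I would argue purely by counting: Proposition~\ref{n-expansion groups are finite} bounds both source and target above by $2^{n2^{n-1} - 2^n + n + 1}$, so it suffices to exhibit this lower bound for $\#\mathcal{G}([n])$. Since $\#\mathcal{G}([n]) = \#L_{\bullet}(\mathcal{G}([n]))$ by Proposition~\ref{n-expansion groups are finite}, and Theorem~\ref{Governing algebra is the universal algebra}(b) provides a surjection $\text{G}([n], \bullet) \twoheadrightarrow L_{\bullet}(\mathcal{G}([n]))$, the task reduces to showing that this surjection is an isomorphism, equivalently that $L_{\bullet}(\mathcal{G}([n])) \cong \text{G}([n], \bullet)$. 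I would establish this identification by first introducing the fibered product of $n$-expansion Lie algebras in the obvious way and verifying that the functor $L_{\bullet}$ commutes with fibered products: this is formal, since nested commutators in a fibered product of groups are computed componentwise. The problem thereby reduces to identifying $L_{\bullet}(G_i([n]))$ for each $i \in [n]$. Inside $G_i([n]) = V_{\{A \ni i\}} \rtimes V_{[n] - \{i\}}$, the identity $[g_{i,j}, v] = (1 + e_j) \cdot v$ valid for $v$ in the normal abelian subgroup, combined with Proposition~\ref{Augmentation=descending central}, shows that the $m$-th graded piece $L_m(G_i([n]))$ matches the subspace of $\text{G}([n], \bullet)$ spanned by pointed subsets $(A, i)$ with $i \in A$ and $\#A = m$. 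Aggregating over $i \in [n]$ produces the desired isomorphism $L_{\bullet}(\mathcal{G}([n])) \cong \text{G}([n], \bullet)$, and then by Theorem~\ref{Governing algebra is the universal algebra}(a) the group $\mathcal{G}([n])$ has cardinality exactly $2^{n2^{n-1} - 2^n + n + 1}$.

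The main obstacle I anticipate is the combinatorial bookkeeping in the direct calculation of $L_{\bullet}(G_i([n]))$: one must match the prescribed bracket $[e_j, \cdot]$ on $\text{G}([n], \bullet)$, given on the summands $W([n], A)$ by the operators $T_j$, with the adjoint action of $e_j \in V_{[n] - \{i\}}$ on the successive quotients $I_{V_{[n] - \{i\}}}^{m-1} V_{\{A \ni i\}} / I_{V_{[n] - \{i\}}}^{m} V_{\{A \ni i\}}$, and simultaneously rule out any unexpected relations that could collapse the dimension. The authors' remark that this is \emph{a simple verification} suggests that once the generators of each graded piece are written down explicitly via the products $(1 + e_{j_1}) \cdots (1 + e_{j_{m-1}}) \cdot e_{\{i\}}$, the identification becomes transparent; nevertheless, this is the one place where the explicit semidirect-product presentation of $G_i([n])$ is indispensable, and the whole argument pivots on getting it right.
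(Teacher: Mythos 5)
Your proposal is correct and follows essentially the same route as the paper: the paper likewise establishes the theorem by combining Proposition \ref{trivial universal} with the counting from Theorem \ref{Governing algebra is the universal algebra} and Proposition \ref{n-expansion groups are finite}, and the identification $L_{\bullet}(\mathcal{G}([n])) \cong \text{G}([n],\bullet)$ you propose via fibered products of Lie algebras is precisely the ``alternative'' verification the authors describe in the paragraph preceding the theorem. Your write-up in fact supplies more detail than the paper's own (very terse) proof, and the computation of $L_{\bullet}(G_i([n]))$ you sketch via $(1+e_{j_1})\cdots(1+e_{j_{m-1}})\cdot e_{\{i\}}$ is exactly the ``simple verification'' the authors leave to the reader.
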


\begin{proof}
The first part follows immediately from Theorem \ref{Governing algebra is the universal algebra} and  Proposition \ref{n-expansion groups are finite}. Hence the second part follows from Proposition \ref{trivial universal}. Finally, the third part follows from the first part, Proposition \ref{counting} and Proposition \ref{n-expansion groups are finite}.
\end{proof}

\begin{remark} 
\label{More on Fibered products} 
The existence of a universal object, i.e. $\mathcal{G}([n])$ or the isomorphic $\mathcal{C}([n])$, follows, for a purely formal reason, already from the existence of a universal bound on the cardinality as for instance the one given in Proposition \ref{n-expansion groups are finite}. Indeed, such a bound implies that there are only finitely many isomorphism classes of $n$-expansion groups and thus the product (as $n$-expansion groups) of the finitely many isomorphism classes will be the sought universal object. 

Similar comments apply in the case of an $n$-expansion Lie algebra and the algebra $C([n], \bullet)$. This gives a slightly alternative approach to the \emph{a priori} proof that the universal objects exists, i.e. of Proposition \ref{trivial universal}. Note that the existence of a universal object is a vital input for the proof of Theorem \ref{governing group is universal}, namely when one applies Proposition \ref{trivial universal}. 

This point of view shows that $\mathcal{G}([n])$ is naturally defined as a product of a few explicit $n$-expansion groups. The meaning of Theorem \ref{governing group is universal} is then that from the trivial universal object, gotten by multiplying out all $n$-expansions, one can extract a simple explicit presentation, namely the definition of $\mathcal{G}([n])$.
\end{remark}

\subsection{\texorpdfstring{$[(k_1, \dots, k_n)]$-expansion groups and $[(k_1, \dots, k_n)]$-expansion Lie algebras}{[(k1, ..., kn)]-expansion groups and [(k1, ..., kn)]-expansion Lie algebras}}
Let $(k_1, \dots, k_n) \in \mathbb{Z}_{\geq 1}^{n}$. We systematically identified the spaces $V_{[(k_1, \dots, k_n)]}$ and $V_{[k_1+ \dots +k_n]}$ by concatenation of coordinates in Section \ref{Auxiliary tensor spaces}, and we shall continue to do so. Let us now define a $[(k_1, \dots, k_n)]$-expansion group. 

\begin{definition} 
\label{definition of (k1, ... ,kn)-expansion groups}
We call a $k_1 + \ldots + k_n$-expansion group $(G, \phi,(g_1, \dots, g_{k_1+ \dots + k_n}))$ a $[(k_1, \dots, k_n)]$-expansion group in case $\phi^{-1}(\text{ker}(\pi_{(k_1, \dots, k_n)}))$ is a $\mathbb{F}_2$-vector space. 
\end{definition}

Observe that a $(1, \dots, 1)$-expansion group, i.e. $k_i = 1$ for each $i \in [n]$, is precisely an $n$-expansion group. We similarly define a $(k_1, \dots, k_n)$-expansion Lie algebra. 

\begin{definition} 
\label{definition of (k1, ... ,kn)-expansion Lie algebra}
We call a $k_1+ \dots + k_n$-expansion Lie algebra $(L_{\bullet}, \psi)$ a $[(k_1, \dots, k_n)]$-expansion Lie algebra in case the following two conditions are satisfied:\\
$(1)$ The bracket restricted to the space $\psi^{-1}(\text{ker}(\pi_{(k_1, \dots, k_n)}))$ is trivial. In other words, $\psi^{-1}(\text{ker}(\pi_{(k_1, \dots, k_n)}))$ is an abelian sub-algebra of $L_{\bullet}$. \\
$(2)$ Let $j$ be in $\mathbb{Z}_{\geq 2}$ and let $(i_1, \dots, i_j)$ be a vector in $[k_1+ \dots + k_n]^j$ with two distinct entries belonging to the same $(k_1, \dots, k_n)$-block. Then the nested commutator 
$$
[\psi|_{L_1}^{-1}(e_{i_1}), [\psi|_{L_1}^{-1}(e_{i_2}), [\dots, [\psi|_{L_1}^{-1}(e_{i_{j - 1}}), \psi|_{L_1}^{-1}(e_{i_j})] \dots ]]].
$$
vanishes. 
\end{definition}

The following fact explains the connection between Definition \ref{definition of (k1, ... ,kn)-expansion groups} and Definition \ref{definition of (k1, ... ,kn)-expansion Lie algebra}. 

\begin{proposition} 
\label{Lie algebra of (k1, ... ,kn)-expansion group is (k1, ... ,kn)-expansion Lie algebra}
Let $(G, \phi,(g_1, \dots, g_{k_1+ \dots + k_n}))$ be a $[(k_1, \dots, k_n)]$-expansion group. Then $((L_{\bullet}(G), [,]_{G}), L_{\bullet}(\phi))$ is a $[(k_1, \dots, k_n)]$-expansion Lie algebra. 
\end{proposition}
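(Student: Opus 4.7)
By Proposition \ref{Lie algebra of an expansion is an expansion of a Lie algebra}, the pair $((L_\bullet(G), [,]_G), L_\bullet(\phi))$ is already a $(k_1+\dots+k_n)$-expansion Lie algebra, so the task reduces to verifying the two additional clauses of Definition \ref{definition of (k1, ... ,kn)-expansion Lie algebra}. Set $\pi := \pi_{(k_1, \dots, k_n)}$, $\psi := L_\bullet(\phi)$, and $H := \phi^{-1}(\ker(\pi)) \subseteq G$. By hypothesis $H$ is an abelian subgroup of $G$ of exponent two, and since $[G,G] = \ker(\phi) \subseteq H$ we moreover have $G^{(i)} \subseteq H$ for every $i \geq 2$; this containment is the main lever for what follows.

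For clause $(1)$, write $W := \psi|_{L_1(G)}^{-1}(\ker(\pi))$, so that $\psi^{-1}(\ker(\pi)) = W \oplus \bigoplus_{i \geq 2} L_i(G)$. I would lift any $\sigma \in W$ to an element $\tilde\sigma \in \phi^{-1}(\ker(\pi)) = H$, and any $\tau \in L_i(G)$ with $i \geq 2$ to an element $\tilde\tau \in G^{(i)} \subseteq H$. Since $H$ is abelian, $[\tilde\sigma, \tilde\tau]$ is the identity in $G$ in every such pair, and hence every bracket of two elements of $\psi^{-1}(\ker(\pi))$ vanishes in the associated graded algebra.

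For clause $(2)$, I would proceed by case analysis on the positions $s < t$ of the two equal-block indices. The conceptual input is the identity $\sigma_{i_s} + \sigma_{i_t} \in W$, valid exactly when $i_s, i_t$ lie in the same block, which interlocks with clause $(1)$. If $(s,t) = (j-1, j)$, the innermost group commutator equals $(g_{i_{j-1}}g_{i_j})^2$, which is trivial because $g_{i_{j-1}}g_{i_j} \in H$ has exponent two; the entire nested commutator then collapses. When the indices themselves coincide, $i_s = i_t$, I invoke axiom $(4)$ of Definition \ref{Expansion of a Lie algebra}: its first part lets me permute the first $j-2$ entries freely so as to bring the two equal indices into an appropriate position, and its second part (combined with the symmetry of the bracket over $\mathbb{F}_2$) then kills any nested commutator of the shape $[\sigma, [\sigma, -]]$. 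Finally, if $i_s \neq i_t$ but the two belong to the same block, I split $\sigma_{i_s} = (\sigma_{i_s} + \sigma_{i_t}) + \sigma_{i_t}$ and expand by bilinearity: the summand carrying $\sigma_{i_t}$ at both positions is handled by the previous case, while the summand carrying $\omega := \sigma_{i_s} + \sigma_{i_t} \in W$ can be rearranged (via axiom $(4)$ first part, and the symmetry of the innermost bracket when $s \in \{j-1, j\}$) so that $\omega$ occupies an outer slot; the inner subexpression then lies in $L_{\geq 2} \subseteq \psi^{-1}(\ker(\pi))$, and clause $(1)$, already established, forces the total bracket to vanish.

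I expect the main subtlety to lie in that last step whenever one of the positions $s, t$ falls inside the innermost pair $\{j-1, j\}$: axiom $(4)$ first part only permutes the first $j-2$ entries, so to pull $\omega$ (or the repeated index) into the desired slot one must also exploit the symmetry of the innermost commutator. Once this reshuffling is checked in each configuration, the argument becomes a routine multilinear expansion layered on top of the conceptual content of clause $(1)$, which itself is merely a direct translation of the defining hypothesis that $\phi^{-1}(\ker(\pi))$ is an $\mathbb{F}_2$-vector space.
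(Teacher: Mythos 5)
Your proposal is correct and follows essentially the same route as the paper: reduce to Proposition \ref{Lie algebra of an expansion is an expansion of a Lie algebra}, verify clause $(1)$ from the abelianness of $\phi^{-1}(\ker(\pi_{(k_1,\dots,k_n)}))$, and handle clause $(2)$ by the same case split on the positions of the two same-block entries. Your additive splitting $\sigma_{i_s}=(\sigma_{i_s}+\sigma_{i_t})+\sigma_{i_t}$ is just the Lie-algebra shadow of the paper's group-level substitution $[g_{i_{j-2}},c]=[g_{i_{j-2}}(g_{i_{j-2}}g_{i_{j-1}}),c]$ and of its group-ring argument for repetitions among the first $j-2$ entries, so the two arguments coincide in substance.
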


\begin{proof}
We deduce from Proposition \ref{Lie algebra of an expansion is an expansion of a Lie algebra} that $((L_{\bullet}(G), [,]_{G}), L_{\bullet}(\phi))$ is a $k_1+ \dots + k_n$-expansion Lie algebra. The Lie algebra $((L_{\bullet}(G), [,]_{G}), L_{\bullet}(\phi))$ also satisfies axiom $(1)$ from Definition \ref{definition of (k1, ... ,kn)-expansion Lie algebra}. Indeed, this follows from our assumption that $\phi^{-1}(\text{ker}(\pi_{(k_1, \dots, k_n)}))$ is abelian.

Since $\phi^{-1}(\text{ker}(\pi_{(k_1, \dots, k_n)}))$ is a $\mathbb{F}_2$-vector space, we have for all $h_1, h_2$ in the same $[(k_1, \dots, k_n)]$-block of $k_1+ \dots + k_n$ that $g_{h_1}g_{h_2}$ is an involution. But since $g_{h_1}, g_{h_2}$ are themselves involutions, we get the identity
$$
[g_{h_1}, g_{h_2}] = (g_{h_1}g_{h_2})^2 = \text{id}.
$$ 
This already shows axiom $(2)$ for $j = 2$. Now let us consider the case $j \geq 3$. We shall distinguish three cases. First suppose that the repetition happens in the first $j - 2$ blocks. Then observe that 
$$
[\psi|_{L_1}^{-1}(e_{i_1}), [\psi|_{L_1}^{-1}(e_{i_2}), [\dots, [\psi|_{L_1}^{-1}(e_{i_{j - 1}}), \psi|_{L_1}^{-1}(e_{i_j})] \dots ]]] = \prod_{1 \leq h \leq j - 2}(1+g_{i_h})[g_{i_{j - 1}}, g_{i_j}],
$$
where, thanks to the fact that $\phi^{-1}(\text{ker}(\pi_{(k_1, \dots, k_n)}))$ is abelian, the action factors completely through $\phi^{-1}(\text{ker}(\pi_{(k_1, \dots, k_n)}))$. Therefore such a repetition would imply that we get the square of $1 + g_{i_h}$ for some $1 \leq h \leq j - 2$ and thus we get $0$. 

Since we already settled the case $j = 2$, we are certainly done if the repetition occurs in the last two indices. Therefore we are left with the case that one index is smaller than $j - 1$ and the other one is in the set $\{j - 1, j\}$. By symmetry of the bracket (we are over $\mathbb{F}_2$) we may assume that the index in $\{j - 1, j\}$ is actually $j - 1$. Furthermore, since $((L_{\bullet}(G), [,]_{G}), L_{\bullet}(\phi))$ is a $k_1+ \dots + k_n$-expansion Lie algebra, we may assume by symmetry in the first $j - 2$ entries of nested commutators coming from axiom $(4)$ that the index before $j - 1$ is $j - 2$. 

Hence we have to analyze the commutator 
$$
[g_{i_{j-2}}, [g_{i_{j - 1}}, g_{i_j}]]
$$
under the condition that $i_{j - 2}$ and $i_{j - 1}$ are in the same block. We claim that 
$$
[g_{i_{j-2}}, [g_{i_{j - 1}}, g_{i_j}]] = [g_{i_{j - 1}}, [g_{i_{j - 1}}, g_{i_j}]]. 
$$
Indeed, because $i_{j - 2}$ and $i_{j - 1}$ are in the same block, it follows that $g_{i_{j-2}}g_{i_{j - 1}}$ is an element of $\phi^{-1}(\text{ker}(\pi_{(k_1, \dots, k_n)}))$, which therefore commutes with $\phi^{-1}(\text{ker}(\pi_{(k_1, \dots, k_n)}))$. In particular,  $g_{i_{j-2}}g_{i_{j - 1}}$ commutes with any commutator, so it certainly commutes with $[g_{i_{j - 1}}, g_{i_j}]$. Therefore 
$$
[g_{i_{j-2}}, [g_{i_{j - 1}}, g_{i_j}]] = [g_{i_{j-2}}(g_{i_{j-2}}g_{i_{j - 1}}), [g_{i_{j - 1}}, g_{i_j}]] = [g_{i_{j - 1}}, [g_{i_{j - 1}}, g_{i_j}]]
$$
as we claimed. The last term vanishes by the same computation following equation (\ref{etau}), which completes the proof.
\end{proof}

Let now $((L_{\bullet}, []), \psi)$ be a $[(k_1, \dots, k_n)]$-expansion Lie algebra. From Proposition \ref{expansion bracket give consistent tensors} we get for every $i \in \mathbb{Z}_{\geq 1}$ a natural multi-linear map
$$
\psi_i: (V_{[(k_1, \dots, k_n)]})^{\otimes i} \to L_i
$$
by taking nested commutators of length $i$. The dual of $\psi_i$ satisfies 
$$
\text{Im}(\psi_i^{\vee}) \subseteq \text{Cons}(V_{[(k_1, \dots, k_n)]}, i).
$$
So far we have only used that $((L_{\bullet}, [, ]), \psi)$ is a $k_1+ \dots + k_n$-expansion Lie algebra. However we see that axiom $(1)$ and $(2)$ of Definition \ref{definition of (k1, ... ,kn)-expansion Lie algebra} immediately give us the following additional constraint.

\begin{proposition} 
\label{expansion bracket give tilde consistent tensors}
We have that
$$
\emph{Im}(\psi_i^{\vee}) \subseteq \widetilde{\emph{Cons}}(V_{[(k_1, \dots, k_n)]}, i).
$$
\end{proposition}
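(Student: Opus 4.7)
The plan is to verify, for each $\rho \in L_i^\vee$, that the tensor $T := \psi_i^\vee(\rho)$ satisfies the three vanishing hypotheses of Definition \ref{furtherly constrained tensors}; Proposition \ref{expansion bracket give consistent tensors} already gives $T \in \text{Cons}(V_{[(k_1, \dots, k_n)]}, i)$, so only these three extra conditions remain. Writing $\tilde{\sigma}_h := \psi|_{L_1}^{-1}(\sigma_h)$ and
\[
N(\sigma_1, \dots, \sigma_i) := [\tilde{\sigma}_1, [\tilde{\sigma}_2, [\dots, [\tilde{\sigma}_{i-1}, \tilde{\sigma}_i] \dots ]]],
\]
the task reduces to showing $N = 0$ under each hypothesis, since $T = \rho \circ N$.

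For conditions (1) and (2) I plan to funnel the relevant entries into the abelian subalgebra $H := \psi^{-1}(\ker(\pi_{(k_1, \dots, k_n)}))$ guaranteed by axiom (1) of Definition \ref{definition of (k1, ... ,kn)-expansion Lie algebra}. Under (1), $\tilde{\sigma}_h$ lies in $H$, while the inner commutator $[\tilde{\sigma}_{h+1}, [\dots, [\tilde{\sigma}_{i-1}, \tilde{\sigma}_i] \dots ]]$ lies in $[L_\bullet, L_\bullet] = \ker(\psi)$ by axiom (3) of Definition \ref{Expansion of a Lie algebra}, and hence also in $H$; abelianness of $H$ then kills their bracket, and outer bracketing preserves zero. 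Under (2), both $\tilde{\sigma}_{i-1}$ and $\tilde{\sigma}_i$ lie in $H$, so the innermost bracket vanishes at the first step.

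For condition (3) the key observation is that on basis tuples $(e_{r_1}, \dots, e_{r_i})$ the hypothesis $\pi_{(k_1, \dots, k_n)}(e_{r_{h_1}}) = \pi_{(k_1, \dots, k_n)}(e_{r_{h_2}}) = e_k$ is precisely the block-collision condition of axiom (2) of Definition \ref{definition of (k1, ... ,kn)-expansion Lie algebra}, which directly yields $N = 0$. To pass to arbitrary $\sigma_h$ I would pick a basis vector $e_{b_k}$ in block $k$, decompose $\sigma_{h_j} = e_{b_k} + w_j$ with $w_j \in \ker(\pi_{(k_1, \dots, k_n)})$, and expand $N$ by multilinearity. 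The four resulting terms then dispatch as follows: the pure $(e_{b_k}, e_{b_k})$ contribution has the basis vector $e_{b_k}$ at two distinct positions and is killed by axiom (2); any term putting some $w_j$ at a position $\leq i - 2$ is killed by condition (1); and when $\{h_1, h_2\} = \{i - 1, i\}$, the pure $(w_1, w_2)$ term is killed by condition (2).

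The main obstacle will be the mixed case in which exactly one of $h_1, h_2$ lies in $\{i - 1, i\}$: here the decomposition leaves a term $N(\dots, e_{b_k}, \dots, w_j, \dots)$ with $w_j$ at position $i - 1$ or $i$ and $e_{b_k}$ at some position $\leq i - 2$, which is not directly eliminated by conditions (1) or (2). The plan for handling it is to exploit the Symmetry of the last two arguments, the Commutativity of the first $i - 2$, and (for small $i$) the Hall--Witt relation, all of which $T$ enjoys as an element of $\text{Cons}$, to rotate the offending $w_j$ into one of the first $i - 2$ slots, where condition (1) closes the argument. The degenerate base cases $i = 2$ and $i = 3$ will be handled by direct inspection, using bilinearity plus axiom (2) for $j = 2$ in the former and a single application of Hall--Witt in the latter.
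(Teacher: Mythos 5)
Your handling of conditions (1) and (2) of Definition \ref{furtherly constrained tensors} is correct and is exactly what the paper intends: the proposition is stated without a proof environment, asserted to follow ``immediately'' from axioms (1) and (2) of Definition \ref{definition of (k1, ... ,kn)-expansion Lie algebra}, and routing everything through the abelian subalgebra $H = \psi^{-1}(\ker(\pi_{(k_1,\dots,k_n)}))$ together with $[L_\bullet,L_\bullet]=\ker(\psi)\subseteq H$ is the right way to make that precise. The gap is in your treatment of condition (3). For tuples of basis vectors the vanishing is indeed just axiom (2) of Definition \ref{definition of (k1, ... ,kn)-expansion Lie algebra}, and that is all the paper ever uses: in the proof of Proposition \ref{Consistent tensors are governing tensors more general}, rule (3) is only invoked on tuples $(e_{r_1},\dots,e_{r_i})$. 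Your attempt to upgrade condition (3) to arbitrary $\sigma_h$ cannot be completed, because that stronger statement is false. The ``main obstacle'' you isolate --- the term $N(\dots,e_{b_k},\dots,w_j,\dots)$ with $w_j\in\ker(\pi_{(k_1,\dots,k_n)})$ in one of the last two slots --- is genuinely nonzero in general, and no amount of Symmetry, Commutativity or Hall--Witt shuffling will kill it; already for $i=2$ there are no first $i-2$ slots to rotate into, and your claim that $i=2$ is settled by ``bilinearity plus axiom (2)'' fails. Concretely, take $n=2$, $(k_1,k_2)=(2,2)$, with $e_1,e_2$ in the first block and $e_3,e_4$ in the second, and write $\tilde{e}_j := \psi|_{L_1}^{-1}(e_j)$. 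In the universal $[(2,2)]$-expansion Lie algebra the classes $[\tilde{e}_1,\tilde{e}_3]$ and $[\tilde{e}_1,\tilde{e}_4]$ are distinct (the governing tensor $\widetilde{\phi}_{(\{1,2\},\{3\})}=\chi_3\otimes(\chi_1+\chi_2)+(\chi_1+\chi_2)\otimes\chi_3$ takes values $1$ and $0$ on them), so some $\rho\in L_2^\vee$ satisfies $\psi_2^\vee(\rho)(e_1,e_2+e_3+e_4)=\rho([\tilde{e}_1,\tilde{e}_3+\tilde{e}_4])\neq 0$, even though $\pi_{(2,2)}(e_1)=\pi_{(2,2)}(e_2+e_3+e_4)=e_1$, so that condition (3), read for general vectors, would force this value to vanish.

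The same example shows that condition (3) of Definition \ref{furtherly constrained tensors}, if read for arbitrary vectors, would contradict the inclusion $\widetilde{\text{Gov}}\subseteq\widetilde{\text{Cons}}$ asserted in Proposition \ref{Consistent tensors are governing tensors more general}. The definition must therefore be read with the $\sigma_h$ in rule (3) ranging over basis vectors (equivalently, over vectors each supported in a single block). Under that reading your argument closes at once: conditions (1) and (2) are handled as you do, condition (3) on basis tuples is axiom (2) verbatim, and the mixed case you worry about never arises. The fix is not a cleverer rotation argument but a clarification of the statement you are verifying.
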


\noindent The inclusions $\widetilde{\text{Cons}}(V_{[(k_1, \dots, k_n)]}, i) \subseteq \text{Cons}(V_{[(k_1, \dots, k_n)]}, i)$ for each $i \in \mathbb{Z}_{\geq 1}$ naturally give rise to surjections
$$
\text{Cons}(V_{[(k_1, \dots, k_n)]}, i)^{\vee} \twoheadrightarrow \widetilde{\text{Cons}}(V_{[(k_1, \dots, k_n)]}, i)^{\vee}.
$$
When we bunch these surjections together, we get an epimorphism of $(k_1+ \dots + k_n)$-expansion Lie algebras
$$
((C([k_1+ \dots + k_n], \bullet), [,]_{[k_1+ \dots + k_n]}), c_{[k_1+ \dots + k_n]}) \twoheadrightarrow ((\widetilde{C}([(k_1, \dots, k_n)], \bullet), [,]_{[n]}), \widetilde{c}_{[(k_1, \dots, k_n)]}).
$$
By construction the algebra on the right hand side is the universal $[(k_1, \dots, k_n)]$-expansion Lie algebra; it is a quotient of the universal $k_1 + \ldots + k_n$-expansion Lie algebra. Thanks to Proposition \ref{counting more} and Proposition \ref{Consistent tensors are governing tensors more general} we have
$$
\# \widetilde{C}([(k_1, \dots, k_n)],\bullet) = 2^{(k_1+ \dots + k_n) \cdot 2^{n - 1} - 2^n + 1 + n}.
$$
Next, in the same spirit of Proposition \ref{Governing algebra is the universal algebra}, one can refine this counting into a presentation of the universal algebra in terms of governing tensors. This can simply be done by dualizing the inclusions
$$
\widetilde{\text{Gov}}(V_{[(k_1, \dots, k_n)]}, i) \subseteq \text{Gov}(V_{[(k_1, \dots, k_n)]}, i).
$$
Bunching together the natural surjections then yields an epimorphism
$$
((\text{G}([k_1+ \dots + k_n], \bullet), [,]_{[n]}), g_{[k_1+ \dots + k_n]}) \twoheadrightarrow ((\widetilde{\text{G}}([n], \bullet), [,]_{[k_1+ \dots + k_n]}), \widetilde{g}_{[k_1+ \dots + k_n]}), 
$$
which gives an explicit presentation of the universal $[(k_1, \dots, k_n)]$-expansion Lie algebra in terms of governing tensors. 

Now we consider the case of $[(k_1, \dots, k_n)]$-expansion groups. Since these are in particular $(k_1+ \dots + k_n)$-expansion groups, we know, thanks to Proposition \ref{n-expansion groups are finite} that they are finite with size equal to the size of their Lie algebra. Next, thanks to Proposition \ref{expansion bracket give tilde consistent tensors}, we conclude that the size of each such group is at most $2^m$, where $m$ equals $(k_1+ \dots + k_n) \cdot 2^{n - 1}-2^n+1+n$.

Now, let $i \in [n]$ and let $x$ be any element of the $i$-th block of $k_1+ \dots + k_n$, i.e. an element of $\{\sum_{1 \leq j \leq i - 1}k_i + 1, \dots, \sum_{1 \leq j \leq i}k_i\}$. Then we have a unique surjection of abstract groups
$$
\phi_{x}: \mathcal{G}([k_1+ \dots + k_n]) \twoheadrightarrow \mathbb{F}_2[\mathbb{F}_2^{[n]-\{i\}}] \rtimes \mathbb{F}_2^{[n]-\{i\}}
$$
that extends the following assignment. For $j \in [k_1+ \dots + k_n]$ in the $h$-th block, with $h \neq i$, we send $g_j$ to $e_h$, where the notation $e_h$ follows our conventions for
$$
\{0\} \rtimes \mathbb{F}_2^{[n]-\{i\}} \subseteq \mathbb{F}_2[\mathbb{F}_2^{[n] - \{i\}}] \rtimes \mathbb{F}_2^{[n] - \{i\}}. 
$$
Instead, the elements of the $i$-th block are sent to the $1$-dimensional subspace 
$$
\mathbb{F}_2 \cdot \{(0, \dots, 0)\} \subseteq \mathbb{F}_2[\mathbb{F}_2^{[n] - \{i\}}]
$$ 
through the character $\chi_x$. The intersection of $\text{ker}(\phi_x)$ as $x$ varies in $[k_1+ \dots + k_n]$ gives a quotient of $\mathcal{G}([k_1+ \dots + k_n])$ that is a $[(k_1, \dots, k_n)]$-expansion group by using the same generators as $\mathcal{G}([k_1+ \dots + k_n])$ and the same projection map to $\mathbb{F}_2^{[k_1+ \dots + k_n]}$. We denote this object by $\widetilde{\mathcal{G}}([k_1+ \dots + k_n])$. 

A direct verification with nested commutators shows that it has size precisely equal to $2^m$, where $m$ is $(k_1+ \dots + k_n) \cdot 2^{n - 1}-2^n+1+n$. At this stage there are several ways to conclude this. A neat argument is given at the end of Section \ref{expansion maps as coordinates of monomials}. Therefore it must be the universal $[(k_1, \dots, k_n)]$-expansion group. We summarize this analysis in the following.

\begin{theorem} 
\label{governing group is universal tilde}
The triple
$$
(\widetilde{\mathcal{G}}([k_1+ \dots + k_n]), \b{g}_{[k_1+ \dots + k_n]}, (e_1, \dots, e_{k_1+ \dots + k_n}))
$$ 
is a $[(k_1, \dots, k_n)]$-expansion group. If $(G, \phi, (g_1, \dots, g_{k_1+ \dots + k_n}))$ is any $[(k_1, \dots, k_n)]$-expansion group, then there is a unique epimorphism of $[(k_1, \dots, k_n)]$-expansion groups
$$ 
(\widetilde{\mathcal{G}}([k_1+ \dots + k_n]), \b{g}_{[n]}, (e_1, \dots, e_{k_1+ \dots + k_n})) \twoheadrightarrow (G, \phi, (g_1, \dots, g_{k_1+ \dots + k_n})).
$$
In particular, $\#G \leq 2^{(k_1+ \dots + k_n) \cdot 2^{n - 1}-2^n+n+1}$ and one has equality if and only if the above map is an isomorphism.  
\end{theorem}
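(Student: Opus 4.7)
The plan consists of three stages: verifying the expansion axioms for $\widetilde{\mathcal{G}}([k_1+\dots+k_n])$, establishing the size equality $|\widetilde{\mathcal{G}}| = 2^m$ with $m = (k_1+\dots+k_n)\cdot 2^{n-1} - 2^n + 1 + n$, and deducing universality by a counting-plus-formal-universal-object argument.

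First, since $\widetilde{\mathcal{G}}$ is by construction a quotient of $\mathcal{G}([k_1+\dots+k_n])$ by the normal subgroup $\bigcap_x \ker(\phi_x)$, the axioms (1)--(4) of Definition \ref{Expansion of groups} transfer at once: the images of $(e_1,\dots,e_{k_1+\dots+k_n})$ remain a generating set of involutions, the induced projection $\b{g}_{[k_1+\dots+k_n]}$ is still surjective onto $V_{[k_1+\dots+k_n]}$, and the commutator subgroup remains an $\mathbb{F}_2$-vector space. The additional condition from Definition \ref{definition of (k1, ... ,kn)-expansion groups}, namely that $\phi^{-1}(\ker\pi_{(k_1,\dots,k_n)})$ be an $\mathbb{F}_2$-vector space, reduces (using that $\widetilde{\mathcal{G}}$ is a $2$-group with involutive generators) to checking that in $\widetilde{\mathcal{G}}$ one has $[g_{h_1}, g_{h_2}] = \mathrm{id}$ as well as $[g_{h_1} g_{h_2}, c] = \mathrm{id}$ for $h_1, h_2$ in the same block and $c \in [\widetilde{\mathcal{G}}, \widetilde{\mathcal{G}}]$. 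Both identities can be verified by testing each $\phi_x$ separately: if $x$ lies in the block of $h_1, h_2$ then the images of $g_{h_1}, g_{h_2}$ lie inside the abelian normal subgroup $\mathbb{F}_2[\mathbb{F}_2^{[n]-\{i\}}]$ of the target, while if $x$ belongs to a different block the images $\phi_x(g_{h_1}), \phi_x(g_{h_2})$ coincide; in either case all relevant commutators vanish in the target.

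The second, and main, obstacle is establishing $|\widetilde{\mathcal{G}}| = 2^m$. The upper bound $|\widetilde{\mathcal{G}}| \leq 2^m$ holds for every $[(k_1, \dots, k_n)]$-expansion group by combining Proposition \ref{n-expansion groups are finite}, Proposition \ref{Lie algebra of (k1, ... ,kn)-expansion group is (k1, ... ,kn)-expansion Lie algebra}, Proposition \ref{expansion bracket give tilde consistent tensors}, and the tensor dimension count from Propositions \ref{Consistent tensors are governing tensors more general} and \ref{counting more}. For the matching lower bound I would compute $L_\bullet(\widetilde{\mathcal{G}})$ directly, showing that each generator $\widetilde{\phi}_{(A,T)}$ of $\widetilde{\mathrm{G}}([k_1+\dots+k_n],\bullet)$ arises as a nested commutator of the $e_i$'s in $\widetilde{\mathcal{G}}$, and proving linear independence by detecting each such commutator in the image under some $\phi_x$. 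This is the ``direct verification with nested commutators'' alluded to by the authors, and is the computational heart of the proof. Alternatively, the dictionary of Section \ref{expansion maps as coordinates of monomials}, identifying generalized governing expansions with epimorphisms onto $\mathbb{F}_2[\mathbb{F}_2^m] \rtimes \mathbb{F}_2^m$, streamlines the count and supplies the lower bound in a more conceptual fashion.

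Finally, universality follows by a soft argument. The uniform upper bound $2^m$ implies that there are only finitely many isomorphism classes of $[(k_1,\dots,k_n)]$-expansion groups, and iterating the fibered product of Definition \ref{Fibered products} (which is easily checked to remain inside the $[(k_1,\dots,k_n)]$-category, since the extra abelian-preimage condition is stable under fibered products) yields a universal object $U$ admitting a unique epimorphism onto every $[(k_1,\dots,k_n)]$-expansion group, including $\widetilde{\mathcal{G}}$. Since $|U| \leq 2^m = |\widetilde{\mathcal{G}}|$, this epimorphism is forced to be an isomorphism, so $\widetilde{\mathcal{G}}$ is itself universal. Uniqueness of the map $\widetilde{\mathcal{G}} \twoheadrightarrow G$ is automatic because it is prescribed on the generators $e_i$, and the final ``if and only if'' assertion concerning the size of $G$ is then an immediate counting consequence.
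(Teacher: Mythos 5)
Your proposal is correct and follows essentially the same route as the paper, which likewise derives the expansion-group axioms from the quotient construction, gets the size $2^m$ from the tensor dimension bound (Propositions \ref{counting more}, \ref{Consistent tensors are governing tensors more general}, \ref{expansion bracket give tilde consistent tensors}) together with the nested-commutator detection argument at the end of Section \ref{expansion maps as coordinates of monomials}, and concludes universality from the formal existence of a universal object (Remark \ref{More on Fibered products}, or the analogue of Proposition \ref{trivial universal}) plus the size comparison. The only small imprecision is that your stated reduction of the extra $\mathbb{F}_2$-vector-space condition to the identities $[g_{h_1},g_{h_2}]=\mathrm{id}$ and $[g_{h_1}g_{h_2},c]=\mathrm{id}$ omits commutativity between same-block products coming from different blocks, but this is harmless since your $\phi_x$-testing argument in fact shows that the whole preimage of $\ker(\pi_{(k_1,\dots,k_n)})$ lands in the elementary abelian normal subgroup $\mathbb{F}_2[\mathbb{F}_2^{[n]-\{i\}}]\rtimes\{0\}$ of each factor.
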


\subsection{Intermezzo: expansion maps as coordinates of monomials} \label{expansion maps as coordinates of monomials} 
Expansion maps were introduced by Smith in \cite[eq. (2.2)]{smith2}. In this section we provide an alternative definition of an expansion map, one that will make clear the connection between expansion maps and expansion groups. A simple computation will show that the present definition is equivalent to the one in Smith.

Let $m \in \mathbb{Z}_{\geq 1}$. Let $G$ be a profinite group and let $A$ be a linearly independent finite set of continuous characters from $G$ to $\mathbb{F}_2$, containing at least two elements. Let $\chi_0$ be in $A$. 

\begin{definition} 
\label{def of expansion}
An expansion map for $G$, with \emph{support} $A$ and \emph{pointer} $\chi_0$, is a continuous group homomorphism
$$
\psi:G \to \mathbb{F}_2[ \mathbb{F}_2^{A - \{\chi_0\}}] \rtimes \mathbb{F}_2^{A - \{\chi_0\}}
$$
such that for every $\chi \in A - \{\chi_0\}$ we have that the natural projection on the $\chi$-th coordinate of $\mathbb{F}_2[ \mathbb{F}_2^{A - \{\chi_0\}}] \rtimes \mathbb{F}_2^{A - \{\chi_0\}}$ composed with $\psi$ equals $\chi$; while the unique non-trivial character $\mathbb{F}_2[ \mathbb{F}_2^{A - \{\chi_0\}}] \rtimes \mathbb{F}_2^{A - \{\chi_0\}} \to \mathbb{F}_2$ that sends the subgroup $\{0\} \rtimes \mathbb{F}_2^{A - \{\chi_0\}}$ to $\{0\}$ composed with $\psi$ equals $\chi_0$. 
\end{definition}

Observe that $\mathbb{F}_2[ \mathbb{F}_2^{A - \{\chi_0\}}]$ is a polynomial ring with a basis given by the set of square-free monomials $t_B := \prod_{j \in B}t_j$ for any $B \subseteq A - \{\chi_0\}$. Hence, projection on monomials gives a collection of functions
$$
\phi_B(\psi): G \to \mathbb{F}_2
$$
that allows us to reconstruct $\psi$ via the simple formula
$$
\psi(g) = \left(\sum_{B \subseteq A - \{\chi_0\}} \phi_B(\psi)(g)t_B, (\chi(g))_{\chi \in A - \{\chi_0\}}\right).
$$
Define $\chi_S := \prod_{\chi \in S} \chi$. Using the composition law in a semi-direct product and expanding the product we obtain the equation
\begin{align}
\label{eSmith}
(d\phi_B(\psi))(g_1, g_2) &:= \phi_B(\psi)(g_1g_2) + \phi_B(g_1) + \phi_B(g_2) \nonumber \\
&= \sum_{\emptyset \subsetneq S \subseteq B} \chi_{S}(g_1) \phi_{B - S}(\psi)(g_2),
\end{align}
and conversely any collection of maps $\{\phi_B\}_{B \subseteq A - \{\chi_0\}}$ satisfying equation (\ref{eSmith}) and $\phi_{\emptyset} = \chi_0$ gives rise to an expansion map with support $A$ and pointer $\chi_0$. Indeed, we can define our expansion map by
$$
g \mapsto \left(\sum_{B \subseteq A - \{\chi_0\}} \phi_B(\psi)(g)t_B, (\chi(g))_{\chi \in A - \{\chi_0\}}\right).
$$
This shows that Definition \ref{def of expansion} and Definition \cite[eq. (2.2)]{smith2} are equivalent. We will now prove an important lemma for the coming section.

\begin{lemma}
\label{lComm}
Let $G$ be a profinite group and let $\psi$ be an expansion map for $G$ with support $A$ and pointer $\chi_0$. Then we have for all $\sigma_1, \ldots, \sigma_{\#A} \in G$
$$
\phi_A(\psi)([\sigma_{1}, [\sigma_{2}, [\dots, [\sigma_{\#A - 1}, \sigma_{\#A}] \dots ]]]]) = \phi_{(A, \chi_0)}(\sigma_1, \dots, \sigma_{\#A}),
$$
where we define
\[
\phi_{(A, \chi_0)} := \sum_{\substack{f \in \textup{Isom}_{\textup{Set}}([\#A], A) \\ f(\#A - 1) = \chi_0 \ \textup{or} \ f(\#A) = \chi_0}} f(1) \otimes \dots \otimes f(\#A).
\]
\end{lemma}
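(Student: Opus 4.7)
My plan is to compute the image under $\psi$ of the nested commutator directly inside the semidirect product $G' := \mathbb{F}_2[\mathbb{F}_2^{A - \{\chi_0\}}] \rtimes \mathbb{F}_2^{A - \{\chi_0\}}$ and then read off the coefficient of the top-degree monomial $t_{A - \{\chi_0\}}$, which is exactly what $\phi_A(\psi)$ records (the subscript $A$ being read as $A - \{\chi_0\}$, since $B$ ranges over subsets of the latter). The first step is a closed commutator formula in $G'$: writing $X = (\alpha, v)$ and $Y = (\alpha', v')$, a direct calculation combining the multiplication $(\alpha, v)(\alpha', v') = (\alpha + [v] \alpha', v + v')$ with the inverse $(\alpha, v)^{-1} = ([v] \alpha, v)$ gives, in characteristic $2$,
$$[X, Y] = (\epsilon_{v'} \alpha + \epsilon_v \alpha', 0), \quad \text{where } \epsilon_u := 1 + [u].$$
Since $[X, Y]$ lies in the abelian normal subgroup, one further checks the simpler identity $[(\gamma, w), (\beta, 0)] = (\epsilon_w \beta, 0)$. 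Setting $\psi(\sigma_i) = (\alpha_i, v_i)$ and $n := \#A$, iterating these two formulas yields
$$[\psi(\sigma_1), [\dots, [\psi(\sigma_{n-1}), \psi(\sigma_n)] \dots]] = \bigl(\epsilon_{v_1} \cdots \epsilon_{v_{n-2}} (\epsilon_{v_{n-1}} \alpha_n + \epsilon_{v_n} \alpha_{n-1}), 0\bigr).$$

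The heart of the argument is extracting the $t_{A - \{\chi_0\}}$-coefficient of this expression. Under the identification $[e_\chi] = 1 + t_\chi$ in the group ring, one has $[v] = \prod_{\chi : v_\chi = 1}(1 + t_\chi)$, and cancelling the constant term in $1 + [v]$ gives
$$\epsilon_{v(\sigma)} = \sum_{\emptyset \neq S \subseteq C_\sigma} t_S, \quad C_\sigma := \{\chi \in A - \{\chi_0\} : \chi(\sigma) = 1\}.$$
Because each $t_\chi$ satisfies $t_\chi^2 = 0$, a product $t_{S_1} \cdots t_{S_{n-1}} t_B$ is non-zero precisely when the sets are pairwise disjoint, in which case it equals $t_{S_1 \cup \cdots \cup S_{n-1} \cup B}$. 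Since $\#(A - \{\chi_0\}) = n - 1$ and there are $n - 1$ factors $\epsilon_{v_i}$ in each summand, the only way to reach $t_{A - \{\chi_0\}}$ is that each $\epsilon$-factor contributes a singleton and the $\alpha$-factor contributes $t_\emptyset$, whose coefficient is $\phi_\emptyset(\psi) = \chi_0$. Re-encoding the constraint that the chosen singleton $\{f(i)\}$ lies in $C_{\sigma_i}$ as the $\mathbb{F}_2$-valued factor $f(i)(\sigma_i)$, the contribution of the first summand collects into
$$\chi_0(\sigma_n) \sum_{g : [n-1] \to A - \{\chi_0\} \text{ bij.}} \prod_{i = 1}^{n - 1} g(i)(\sigma_i) = \sum_{\substack{f : [n] \to A \text{ bij.}\\ f(n) = \chi_0}} \prod_{i = 1}^{n} f(i)(\sigma_i),$$
after extending $g$ to $f$ via $f(n) := \chi_0$. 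The second summand produces the analogous sum over bijections with $f(n - 1) = \chi_0$, and since these two conditions are mutually exclusive their total is precisely $\phi_{(A, \chi_0)}(\sigma_1, \dots, \sigma_n)$.

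The main obstacle, while not deep, is the bookkeeping in this last step: verifying that the only way to form $t_{A - \{\chi_0\}}$ is by the singleton-plus-$t_\emptyset$ assignment, and then aligning this rigidity with the asymmetry of $\phi_{(A, \chi_0)}$, which privileges the two innermost positions $n - 1$ and $n$. That asymmetry is structurally forced: only $\sigma_{n-1}$ and $\sigma_n$ enter the iterated commutator via their full data $(\alpha_\bullet, v_\bullet)$, whereas the outer entries $\sigma_1, \dots, \sigma_{n-2}$ contribute only through $\epsilon_{v_i}$, and hence only through their $v$-components — which is precisely why $\phi_{(A, \chi_0)}$ is supported on bijections sending one of the last two indices to $\chi_0$.
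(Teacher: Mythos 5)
Your proof is correct and follows essentially the same route as the paper's: both compute $\psi$ of the nested commutator directly in the semidirect product, observe that the outer $\#A-2$ entries act only through multiplication operators on the group ring, and extract the coefficient of the top monomial $t_{A-\{\chi_0\}}$ by a degree/disjointness count (your use of $\phi_\emptyset(\psi)=\chi_0$ for the innermost pair is just a repackaging of the paper's computation of the degree-one coefficients $\lambda_{\{\chi\}}$ of $[\psi(\sigma_{\#A-1}),\psi(\sigma_{\#A})]$). Your explicit closed commutator formula and the remark that $\phi_A$ must be read as $\phi_{A-\{\chi_0\}}$ are welcome clarifications, but do not change the argument.
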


\begin{proof}
Since the commutator group of $\mathbb{F}_2[ \mathbb{F}_2^{A - \{\chi_0\}}] \rtimes \mathbb{F}_2^{A - \{\chi_0\}}$ is a vector space over $\mathbb{F}_2$, we immediately see that
\begin{multline*}
\psi([\sigma_{1}, [\sigma_{2}, [\dots, [\sigma_{\#A - 1}, \sigma_{\#A}] \dots ]]]]) = \\
\left(\prod_{i \in [\#A-2]} \left(\sum_{\chi \in A-\{\chi_0\}} \chi(\sigma_i)t_{\{\chi\}} \right)\right) \cdot [\psi(\sigma_{\#A - 1}), \psi(\sigma_{\#A})].
\end{multline*}
Observe that the element $[\psi(\sigma_{\#A - 1}), \psi(\sigma_{\#A})]$ will admit an expansion of the form
$$
\sum_{\emptyset \neq B \subseteq A-\{\chi_0\}}\lambda_{B}t_B,
$$
as any commutator of the group $\mathbb{F}_2[ \mathbb{F}_2^{A - \{\chi_0\}}] \rtimes \mathbb{F}_2^{A - \{\chi_0\}}$ does. Clearly the operator 
$$
\prod_{i \in [\#A-2]} \left(\sum_{\chi \in A-\{\chi_0\}} \chi(\sigma_i)t_{\{\chi\}} \right)
$$ 
will kill all monomials of degree at least $2$. On the other hand, for each $\chi \in A-\{\chi_0\}$, we have that
$$
\lambda_{\{\chi\}}=\chi(\sigma_{\#A-1})\chi_0(\sigma_{\#A})+\chi(\sigma_{\#A})\chi_0(\sigma_{\#A-1}).
$$
Therefore, expanding the product 
$$
\left(\prod_{i \in [\#A-2]} \hspace{-0.1cm} \left(\sum_{\chi \in A-\{\chi_0\}}\chi(\sigma_i)t_{\{\chi\}} \right)\right) \cdot \hspace{-0.2cm} \sum_{\chi \in A-\{\chi_0\}}(\chi(\sigma_{\#A-1})\chi_0(\sigma_{\#A})+\chi(\sigma_{\#A})\chi_0(\sigma_{\#A-1}))t_{\{\chi\}},
$$
using ordinary multiplication of polynomials, and recalling that the square of each variable is $0$, yields the desired result. 
\end{proof}

We finish this section with a simple argument that the group $\widetilde{\mathcal{G}}([k_1+ \dots + k_n])$ has size $2^m$, where $m=(k_1+ \dots + k_n) \cdot 2^{n-1}-2^n+1+n$. We claim that the collection of nested commutators $[g_{r_1}, [g_{r_2}, [\dots, [g_{r_{i - 1}}, g_{r_i}] \dots ]]]]$, with $2 \leq i \leq n$ and $(r_1, \dots, r_i)$ satisfying requirements $(a)$ to $(e)$ as in the proof of Proposition \ref{Consistent tensors are governing tensors more general}, form a linearly independent set of vectors in $[\widetilde{\mathcal{G}}([k_1+ \dots + k_n]),\widetilde{\mathcal{G}}([k_1+ \dots + k_n])]$. Note that this is indeed a vector space over $\mathbb{F}_2$ by definition of a $k_1+ \dots + k_n$-expansion. Clearly, the claim implies the desired conclusion by counting, with the same argument as in Proposition \ref{counting more}. 

To see the claim, we simply argue as follows. By construction, for each $(r_1, \dots, r_i)$ as above, we can construct an expansion for $\widetilde{\mathcal{G}}([k_1+ \dots + k_n])$, with pointer $\chi_{r_i}$ if $r_i$ is not in the image of $g$ and with pointer $\chi_{r_{i-1}}$ if $r_i$ is in the image of $g$. From this, we get a corresponding map $\phi_{(r_1, \dots, r_i)}:\widetilde{\mathcal{G}}([k_1+ \dots + k_n]) \to \mathbb{F}_2$. These functions are all quadratic characters on $[\widetilde{\mathcal{G}}([k_1+ \dots + k_n]),\widetilde{\mathcal{G}}([k_1+ \dots + k_n])]$ and thanks to Proposition \ref{lComm} we see that it takes value $1$ on the nested commutator $[g_{r_1}, [g_{r_2}, [\dots, [g_{r_{i - 1}}, g_{r_i}] \dots ]]]]$. 

Furthermore, Proposition \ref{lComm} also implies that it vanishes at all other commutators of length $i$. It certainly vanishes at nested commutators of length larger than $i$, because it factors through a group where nested commutators of length $i+1$ vanish. One can also show that, by construction, it vanishes on the nested commutators of length smaller than $i$ among the ones listed above (making use of the fact that they are among the privileged generators). 

\subsection{Reconstructing an expansion group from its corners} 
\label{Reconstructing from corners}
Let $n$ be a positive integer and let $(k_1, \dots, k_n) \in \mathbb{Z}_{\geq 1}^n$. Let $(G, \phi,(g_1, \dots, g_{k_1+ \dots + k_n}))$ be a $[(k_1, \dots, k_n)]$-expansion group. For $i \in [n]$, let $N_i$ be the normal subgroup of $G$ generated by all the $g_j$ with $j$ in the $i$-th block. We have in this way a natural structure of $[(k_h)_{h \neq i}]$-expansion group on $G/N_i$ by using the remaining generators and composing the map $\phi$ with the projection on the coordinates outside the $i$-th block. 

We call this $[(k_h)_{h \neq i}]$-expansion group the $i$\emph{-th corner} of $(G, \phi,(g_1, \dots, g_{k_1+ \dots + k_n}))$ and denote it with $(G, \phi,(g_1, \dots, g_{k_1+ \dots + k_n}))_i$. The goal of this section is to examine which extra  data permits to reconstruct $(G, \phi,(g_1, \dots, g_{k_1+ \dots + k_n}))$ from the collection of its corners $\{(G, \phi,(g_1, \dots, g_{k_1+ \dots + k_n}))_i\}_{i \in [n]}$. As we shall see, the answer is roughly: an explicit collection of $2$-cocycles obtained from linear combinations of expansion maps as introduced in Section \ref{expansion maps as coordinates of monomials}. 

We remind the reader that we defined a function $f$ in equation (\ref{ef}). We begin by defining a space of functions
$$
\Phi[(k_1, \dots, k_n)] \subseteq \text{Map}(\widetilde{\mathcal{G}}([k_1, \dots, k_n]), \mathbb{F}_2)
$$
in the following manner. Thanks to the material of Section \ref{expansion maps as coordinates of monomials} and the definition of $\widetilde{\mathcal{G}}([k_1, \dots, k_n])$ we see that we naturally obtain a map for each $A \subseteq [n]$, $i \in A$ and $x \in f(i)$
$$
\Phi_{(A, x)}: \widetilde{\mathcal{G}}([k_1, \dots, k_n]) \to \mathbb{F}_2,
$$
which furthermore satisfies the equation
$$
(d\Phi_{(A, x)})(\sigma, \tau)=\sum_{\emptyset \neq B \subseteq A - \{i\}}\chi_{B}(\sigma)\Phi_{(A - B,x)}(\tau).
$$
Here $\chi_{B}$ equals $\prod_{j \in B} \chi_{j}$ for $B \neq \emptyset$, where $\chi_{j}$ is the character obtained by summing all projections in the $j$-th block. We also put $\chi_{\emptyset}:=0$. Observe that in case $|A|=1$, the map $\Phi_{(A, x)}$ coincides with the character $\chi_{x}$. 

\begin{proposition}
\label{Shuffling relation}
For each $A \subseteq [n]$ we have that 
$$
\sum_{i \in A} \sum_{x \in f(i)} \Phi_{(A, x)} = \chi_{A}. 
$$
\end{proposition}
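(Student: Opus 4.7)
The plan is to prove the identity by induction on $|A|$, writing $\Psi_A := \sum_{i \in A}\sum_{x \in f(i)}\Phi_{(A,x)}$. The base case $|A|=1$ is immediate: for $A = \{i\}$ the excerpt records $\Phi_{(\{i\}, x)} = \chi_x$, so $\Psi_{\{i\}} = \sum_{x \in f(i)}\chi_x$, which is precisely the definition of $\chi_i$. For the inductive step, the plan is to match the coboundaries $d\Psi_A$ and $d\chi_A$, observe that $\Psi_A + \chi_A$ is then a group homomorphism to $\mathbb{F}_2$, and finally check that it vanishes on the generating involutions of $\widetilde{\mathcal{G}}([k_1,\dots,k_n])$.

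For the coboundary computation, I would sum the defining equation for $d\Phi_{(A,x)}$ over $i \in A$ and $x \in f(i)$ and swap the order of summation (the joint condition ``$i \in A$ and $\emptyset \neq B \subseteq A-\{i\}$'' is the same as ``$\emptyset \neq B \subsetneq A$ and $i \in A-B$''), obtaining
$$d\Psi_A(\sigma,\tau) = \sum_{\emptyset \neq B \subsetneq A}\chi_B(\sigma)\,\Psi_{A-B}(\tau) = \sum_{\emptyset \neq B \subsetneq A}\chi_B(\sigma)\chi_{A-B}(\tau),$$
using the inductive hypothesis in the last equality. On the other side, since each $\chi_i$ is a homomorphism,
$$\chi_A(\sigma\tau) = \prod_{i \in A}\bigl(\chi_i(\sigma)+\chi_i(\tau)\bigr) = \sum_{B \subseteq A}\chi_B(\sigma)\chi_{A-B}(\tau)$$
in $\mathbb{F}_2$, with empty products understood as $1$; isolating the $B = \emptyset$ and $B = A$ terms yields $d\chi_A(\sigma,\tau) = \sum_{\emptyset \neq B \subsetneq A}\chi_B(\sigma)\chi_{A-B}(\tau)$. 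Hence $d\Psi_A = d\chi_A$, so $\Psi_A + \chi_A$ is a $1$-cocycle, i.e.\ a homomorphism to $\mathbb{F}_2$.

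To finish, I would check vanishing on the generating involutions $g_j$, which generate $\widetilde{\mathcal{G}}([k_1,\dots,k_n])$ by Proposition \ref{n-expansion groups are finite}. If $|A| \geq 2$, then $\chi_A(g_j) = \prod_{i \in A}\chi_i(g_j) = 0$, since $g_j$ lies in a single block and so at most one factor is nonzero. For $\Phi_{(A,x)}(g_j)$, recall from Section \ref{expansion maps as coordinates of monomials} that $\Phi_{(A,x)}$ is the coefficient of the degree-$(|A|-1)$ monomial $t_{A-\{i\}}$ in the expansion map attached to $(A,x)$; but each $g_j$ maps under this expansion map either to a basis vector of the semidirect factor, to the scalar $1$, or to the identity, so the top-monomial coefficient vanishes as soon as $|A|-1 \geq 1$. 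Hence $\Psi_A(g_j) = 0 = \chi_A(g_j)$, and $\Psi_A + \chi_A$ is the zero homomorphism, completing the induction. The main obstacle to anticipate is the bookkeeping in the coboundary swap; matching the resulting sums and then evaluating on the explicit generators is otherwise routine.
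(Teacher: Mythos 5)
Your proposal is correct and follows essentially the same route as the paper: induction on $\#A$, matching $d\Psi_A$ with $d\chi_A$ via the defining expansion equation and the inductive hypothesis, and then killing the resulting character by evaluating on the distinguished generators. The extra detail you supply (the summation swap and the explicit monomial-degree reason why $\Phi_{(A,x)}(g_j)=0$ for $\#A\geq 2$) is exactly what the paper leaves implicit, and it checks out.
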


\begin{proof}
We shall proceed by strong induction on the size of $A$. If $\#A \leq 1$ the statement is a complete triviality. We assume henceforth that $\#A \geq 2$ and that the claimed relation holds for any $B \subseteq [n]$ with $\#B<\#A$. Applying $d$ to the left hand side we get
$$
\sum_{\emptyset \neq B \subsetneq A} \chi_B(\sigma) \left(\sum_{i \in A-B} \sum_{x \in f(i)} \Phi_{(A - B, x)}(\tau)\right) = \sum_{\emptyset \neq B \subsetneq A} \chi_B(\sigma) \chi_{A-B}(\tau) = (d\chi_A)(\sigma, \tau),
$$ 
where we use the inductive assumption in the first equality and a simple calculation in the second equality.

We conclude that $\sum_{i \in A} \sum_{x \in f(i)} \Phi_{(A, x)}$ and $\chi_{A}$ differ by a character of  $\widetilde{\mathcal{G}}([k_1, \dots, k_n])$. However, since $|A| \geq 2$, we have that each $\sum_{x \in f(i)} \Phi_{(A, x)}$ vanishes on the distinguished generators of the $[k_1, \dots, k_n]$-expansion group $\widetilde{\mathcal{G}}([k_1, \dots, k_n])$. The same applies to $\chi_{A}$, again because $|A| \geq 2$. We conclude that $\sum_{i \in A} \sum_{x \in f(i)} \Phi_{(A, x)}$ and $\chi_{A}$ differ by the trivial character, which is precisely the desired conclusion. 
\end{proof}

When $A$ changes, the corresponding set of functions generate spaces in direct sum by Lemma \ref{lComm}. Altogether we see that the colllection $\{\Phi_{(A, x)}\}_{A \subseteq [n], \#A \geq 2}$ with $x$ varying in a block inside $A$, is a collection of linearly independent functions. We define $\Phi[(k_1, \dots, k_n)]$ to be the space they generate (as a basis) together with all the characters of $\widetilde{\mathcal{G}}([k_1, \dots, k_n])$. 

For $i \in [n]$ we have a natural \emph{cornering} operator $P_i$. Namely for $i \in [n]$, we set
$$
P_i(\Phi_{(A, x)}) :=
\left\{
\begin{array}{ll}
\Phi_{(A - \{i\},x)}  & \mbox{if } i \in A \text{ and } x \not \in f(i)\\
0 & \mbox{otherwise.}
\end{array}
\right.
$$
We put the value of $P_i$ on a character to always be $0$. In this way the assignment extends to a surjective linear operator
$$
P_i: \Phi[(k_1, \dots, k_n)] \twoheadrightarrow \Phi[(k_h)_{h \neq i}].
$$
One obtains after two applications of Proposition \ref{Shuffling relation} that for any $i \in [n]$
$$
P_i(\chi_{A}) :=
\left\{
	\begin{array}{ll}
		\chi_{A-\{i\}}  & \mbox{if } i \in A \\
		0 & \mbox{if } i \not \in A,
	\end{array}
\right.
$$
One can compose such operators $P_i$, and it is a simple observation that the order of composition does not affect the result. Hence for any subset $T \subseteq [n]$ we have an operator
$$
P_T: \Phi[(k_1, \dots, k_n)] \twoheadrightarrow \Phi[(k_h)_{h \not \in T}].
$$
We can now rewrite equation (\ref{eSmith})
as
\begin{align}
\label{Reformulate expansion equation}
(d\Phi_{(A, x)})(\sigma, \tau)=\sum_{\emptyset \neq B \subseteq [n]}\chi_{B}(\sigma)P_B(\Phi_{(A, x)})(\tau).
\end{align}
From equation (\ref{Reformulate expansion equation}), it is transparent that the restriction of $\Phi[(k_1, \dots, k_n)]$ to the commutator subgroup consists of a space of quadratic characters.\footnote{If one looks at the way we have constructed such functions, in Section \ref{expansion maps as coordinates of monomials}, then this fact is immediately clear even without invoking the more informative equation (\ref{Reformulate expansion equation}).} We actually have the following stronger statement.

\begin{proposition} 
\label{Maps restricts to characters}
The restriction to $[\widetilde{\mathcal{G}}([k_1, \dots, k_n]), \widetilde{\mathcal{G}}([k_1, \dots, k_n])]$ induces a surjective homomorphism
$$\Phi[(k_1, \dots, k_n)] \twoheadrightarrow [\widetilde{\mathcal{G}}([k_1, \dots, k_n]), \widetilde{\mathcal{G}}([k_1, \dots, k_n])]^{\vee}.
$$
Furthermore the kernel is given precisely by the span of $V_{[(k_1, \dots ,k_n)]}^{\vee}$ and the functions $\chi_{A}$ with $A \subseteq [n]$. 
\end{proposition}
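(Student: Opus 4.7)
The plan is three-fold: (i) verify the proposed subspace lies in the kernel, (ii) count dimensions, and (iii) prove surjectivity by a dual-basis argument. Writing $\widetilde{\mathcal{G}} := \widetilde{\mathcal{G}}([k_1+\cdots+k_n])$ for brevity, step (i) is immediate: every character of $\widetilde{\mathcal{G}}$ vanishes on $[\widetilde{\mathcal{G}},\widetilde{\mathcal{G}}]$ by definition, and each $\chi_A = \prod_{i \in A}\chi_i$ (pointwise product in $\mathbb{F}_2$) vanishes there because each factor does.

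For (ii), the basis of $\Phi[(k_1,\dots,k_n)]$ recalled just before Proposition \ref{Shuffling relation} has $(k_1+\cdots+k_n) + \sum_{\#A \geq 2}\sum_{i \in A}k_i = (k_1+\cdots+k_n)\cdot 2^{n-1}$ elements. Proposition \ref{Shuffling relation} expresses each $\chi_A$ with $\#A \geq 2$ as $\sum_{i \in A,\, x \in f(i)}\Phi_{(A,x)}$, hence in the $\Phi_{(A,\cdot)}$-block of the basis, so these $\chi_A$ are jointly independent of the characters and of one another; the proposed kernel therefore has dimension $(k_1+\cdots+k_n) + (2^n - n - 1)$. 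Theorem \ref{governing group is universal tilde} gives $\#\widetilde{\mathcal{G}} = 2^m$ with $m = (k_1+\cdots+k_n)\cdot 2^{n-1} - 2^n + n + 1$, so $\dim_{\mathbb{F}_2}[\widetilde{\mathcal{G}},\widetilde{\mathcal{G}}]^\vee = m - (k_1+\cdots+k_n)$, and one checks that this equals $\dim_{\mathbb{F}_2}\Phi - (k_1+\cdots+k_n) - (2^n - n - 1)$. Once (iii) is in hand, the kernel has exactly the claimed dimension, and combined with (i) this forces equality.

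For (iii) I would import verbatim the separation argument given at the end of Section \ref{expansion maps as coordinates of monomials}. That argument exhibits a basis of $[\widetilde{\mathcal{G}},\widetilde{\mathcal{G}}]$ consisting of nested commutators $[g_{r_1},[\dots,[g_{r_{i-1}},g_{r_i}]\dots]]$ indexed by tuples $(r_1,\dots,r_i)$ satisfying rules $(a)$--$(e)$ in the second proof of Proposition \ref{Consistent tensors are governing tensors more general}, and for each such tuple exhibits $\Phi_{(A,x)}$ (with $A$ the set of blocks touched and $x$ the appropriate pointer index, namely $r_i$ if $r_i$ lies in the image of $g$ and $r_{i-1}$ otherwise) as a function taking value $1$ on its associated basis commutator and $0$ on every other basis commutator: longer commutators die because $\Phi_{(A,x)}$ factors through a nilpotent quotient of class $\#A = i$, commutators of the same length are separated by Lemma \ref{lComm}, and shorter basis commutators among the listed ones are killed by the interaction of the privileged generators with rules $(a)$--$(e)$. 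This produces a dual basis for the restriction map, hence surjectivity. The main obstacle is precisely step (iii); while the ingredients are already laid out at the end of Section \ref{expansion maps as coordinates of monomials}, care is required to repackage that separation computation as the dual-basis statement demanded by the present proposition, and the clean numerical match between $\dim\Phi$, the size of the proposed kernel, and $\dim[\widetilde{\mathcal{G}},\widetilde{\mathcal{G}}]^\vee$ serves as a reassuring consistency check.
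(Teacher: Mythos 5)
Your proof is correct and rests on the same ingredients as the paper's own argument (Lemma \ref{lComm}, Proposition \ref{Shuffling relation}, and the size of $\widetilde{\mathcal{G}}([k_1,\dots,k_n])$ coming from Theorem \ref{governing group is universal tilde} together with the separation computation at the end of Section \ref{expansion maps as coordinates of monomials}); the paper likewise derives surjectivity from the universal-group theorem and identifies the kernel via the shuffling relation. The only organizational difference is that you pin down the kernel by a containment-plus-dimension count, whereas the paper argues directly from the fact that $\sum_{i\in A}\sum_{x\in f(i)}\phi_{(A,x)}=0$ is the unique relation among governing tensors of a fixed support --- a repackaging of the same mathematics, not a genuinely different route.
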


\begin{proof}
The surjectivity follows immediately from Theorem \ref{governing group is universal tilde}. We next show the claim on the kernel. Recall that spaces of governing tensors with different supports are in direct sum and that, within one support $A$, the only relation is
$$
\sum_{i \in A} \sum_{x \in f(i)} \phi_{(A, x)} = 0.
$$
Therefore we deduce from Lemma \ref{lComm} that an element of the kernel must be a sum of characters and maps of the form
$$
\sum_{i \in A} \sum_{x \in f(i)} \Phi_{(A, x)}.
$$
But this last expression, thanks to Proposition \ref{Shuffling relation}, is precisely the function $\chi_{A}$. This gives the desired conclusion. 
\end{proof}

Observe that the right hand side of equation (\ref{Reformulate expansion equation}) now has an expression that is a universal function applied to $\Phi_{(A, x)}$. From this we deduce, by linearity, that for all $\Phi \in \Phi[(k_1, \dots, k_n)]$ we must have
\begin{align}
\label{Universal expansion equation}
(d\Phi)(\sigma, \tau)=\sum_{\emptyset \neq B \subseteq [n]}\chi_{B}(\sigma)P_B(\Phi)(\tau).
\end{align}
Observe that from the right hand side of equation (\ref{Universal expansion equation}), we can reconstruct each $P_{i}(\Phi)$ by plugging in an involution $g_x$ with $x \in f(i)$. Indeed such a choice will detect precisely the set $B=\{i\}$, giving in this way  $P_i(\Phi)(\tau)$. We now consider the inverse problem. Suppose that for each $i \in [n]$ we are given an element $\Phi_i \in \Phi[(k_h)_{h \neq i}]$.

\textbf{Question:} Under which conditions do we have an element $\Phi \in \Phi[(k_1, \dots, k_n)]$ such that $P_i(\Phi)=\Phi_i$ for each $i \in [n]$?  

An obvious necessary condition is that $P_j(\Phi_i)=P_i(\Phi_j)$ for every $i, j \in [n]$. It turns out to also be sufficient. We first call \emph{commuting} a vector $(\Phi_i)_{1 \leq i \leq n}$ with $\Phi_i \in \Phi[(k_h)_{h \neq i}]$ and $P_j(\Phi_i)=P_i(\Phi_j)$ for every $i, j \in [n]$. Thanks to the fact that the vector is commuting one sees that for every subset $\emptyset \neq B \subseteq [n]$ we can unambiguously define $P_B((\Phi_i)_{1 \leq i \leq n})$ by composing the operators $P_i$ in any order. With that in mind, we proceed, as a second step, to attach to any such commuting vector a $2$-cocycle
$$
\theta((\Phi_i)_{1 \leq i \leq n}):=\sum_{i \in [n]} \chi_{i}(\sigma)\Phi_i(\tau)+\sum_{\substack{B \subseteq [n] \\ \#B \geq 2}} \chi_{B}(\sigma)P_B((\Phi_i)_{1 \leq i \leq n})(\tau).
$$
This is a $2$-cocycle because each $\Phi_i$ satisfies the equation
$$
(d\Phi_i)(\sigma, \tau)=\sum_{\emptyset \neq B \subseteq [n]}\chi_{B}(\sigma)P_B(\Phi_i)(\tau),
$$
and the vector $(\Phi_i)_{1 \leq i \leq n}$ is commutative. We denote by $\text{Comm-Vect}(k_1, \dots, k_n)$ the space of all such commutative vectors. We stress that the space $\text{Comm-Vect}(k_1, \dots, k_n)$ is constructed using only the collection of spaces $\{\Phi[(k_h)_{h \neq i}]\}_{i \in [n]}$. 

For every $j \in \mathbb{Z}_{\geq 1}$ we have a natural subspace of $\Phi[(k_1, \dots, k_n)]$, which we denote $\Phi_j[(k_1, \dots, k_n)]$, consisting of functions vanishing on all nested commutators of length at least $j+1$. This induces a filtration of subspaces
$$
\{0\}=:\Phi_0[(k_1, \dots, k_n)] \subseteq \Phi_1[(k_1, \dots, k_n)] \subseteq \dots \subseteq \Phi_j[(k_1, \dots, k_n)] \subseteq \cdots,
$$
exhausting the space $\Phi[(k_1, \dots, k_n)]$, which is actually equal to the space $\Phi_n[(k_1, \dots, k_n)]$. It is a simple consequence of Proposition \ref{Maps restricts to characters} that $\Phi_1[(k_1, \dots, k_n)]$ coincides with the span of the characters and the maps $\chi_{A}$ for $A \subseteq [n]$.

We now briefly reinterpret in cohomological terms the spaces of functions obtained, together with the maps among them and the basic properties so far established.\footnote{Many thanks to Adam Morgan for pointing this out to us.} For a $[(k_1, \ldots, k_n)]$-expansion group $(G,\phi,(g_1, \ldots, g_{k_1+\ldots k_n}))$ let us denote by $N$ the subgroup $\phi^{-1}(\text{ker}(\pi_{(k_1, \ldots, k_n)}))$. Through $\pi_{(k_1, \ldots, k_n)} \circ \phi$ we identify $G/N$ and $V_{[n]}$. 

Recall that $N$ is a vector space over $\mathbb{F}_2$, thanks to the definition of a $[(k_1, \ldots, k_n)]$-expansion group. Hence $H^1(N,\mathbb{F}_2)$ is just the set of characters from $N$ to $\mathbb{F}_2$. Thanks to Shapiro's Lemma we can identify this space with $H^1(G, \mathbb{F}_2[V_{[n]}])$. Observe that to give a $1$-cocycle from $G$ to $\mathbb{F}_2[V_{[n]}]$ amounts to giving a system of $1$-cochains $\{\phi_B\}_{B \subseteq [n]}$ with $\phi_{B}:G \to \mathbb{F}_2$ satisfying the recursive equation 
$$
\phi_B(\sigma\tau) + \phi_B(\sigma) + \phi_B(\tau) = \sum_{\emptyset \neq B' \subseteq B}\chi_{B'}(\sigma)\phi_{B-B'}(\tau)
$$
for each $B \subseteq [n]$. Such an identification follows again using the monomial coordinates of Section \ref{expansion maps as coordinates of monomials}. We denote by $\text{Cocy}(G,\mathbb{F}_2[V_{[n]}])$ the space of $1$-cocycles. Thanks to equation (\ref{Univ.exp eq.}) we now see that the assignment\footnote{We extend the notation for $P_B$ by defining it to be the identity for $B= \emptyset$.}
$$
\Phi \mapsto \{P_B(\Phi)\}_{B \subseteq [n]}
$$
induces a group homomorphism 
$$
\Phi[(k_1, \dots, k_n)] \to \text{Cocy}(\widetilde{\mathcal{G}}([k_1, \dots, k_n]), \mathbb{F}_2[V_{[n]}]). 
$$
We can now easily deduce the following. 

\begin{proposition} 
\label{space of maps as 1-cocycles}
The homomorphism
$$
\Phi[(k_1, \dots, k_n)] \to \emph{Cocy}(\widetilde{\mathcal{G}}([k_1, \dots, k_n]), \mathbb{F}_2[V_{[n]}])
$$
is an isomorphism.
\end{proposition}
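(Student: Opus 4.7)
The plan is to establish injectivity by inspection and then force surjectivity via a dimension count. Injectivity is immediate from the footnote convention $P_{\emptyset} = \mathrm{id}$: the $\emptyset$-component of the system $\{P_B(\Phi)\}_{B \subseteq [n]}$ is $\Phi$ itself, so if the image cocycle vanishes then $\Phi = 0$. Everything therefore reduces to showing that the source and target have the same $\mathbb{F}_2$-dimension.

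Write $m := (k_1+ \dots +k_n) \cdot 2^{n-1} - 2^n + n + 1$ for the dimension of $\widetilde{\mathcal{G}}([k_1, \dots, k_n])$ given by Theorem \ref{governing group is universal tilde}. For the source, I would apply Proposition \ref{Maps restricts to characters}: this gives a short exact sequence in which the quotient $[\widetilde{\mathcal{G}}, \widetilde{\mathcal{G}}]^{\vee}$ has dimension $m - (k_1+ \dots +k_n)$, and the kernel is spanned by $V_{[(k_1, \dots, k_n)]}^{\vee}$ together with the functions $\chi_A$. Since $\chi_{\{i\}} = \sum_{x \in f(i)} \chi_x$ already lies in $V_{[(k_1, \dots, k_n)]}^{\vee}$ and $\chi_{\emptyset} = 0$, only the $\chi_A$ with $|A| \geq 2$ contribute new basis vectors; evaluation at products $g_{y_1} g_{y_2}$ of generators taken from distinct blocks verifies that these are linearly independent from $V_{[(k_1, \dots, k_n)]}^{\vee}$ and from each other. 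Adding the contributions gives
\[
\dim_{\mathbb{F}_2} \Phi[(k_1, \dots, k_n)] = (k_1+ \dots +k_n) + (2^n - 1 - n) + (m - (k_1+ \dots +k_n)) = (k_1+ \dots +k_n) \cdot 2^{n-1}.
\]

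For the target, I would use Shapiro's Lemma as indicated in the excerpt. Set $N := \phi^{-1}(\ker \pi_{(k_1, \dots, k_n)})$, an $\mathbb{F}_2$-vector space of dimension $m - n$. The identification $\mathbb{F}_2[V_{[n]}] \cong \mathrm{Ind}_N^{\widetilde{\mathcal{G}}} \mathbb{F}_2$ yields $H^1(\widetilde{\mathcal{G}}, \mathbb{F}_2[V_{[n]}]) \cong H^1(N, \mathbb{F}_2) = N^{\vee}$, again of dimension $m - n$. The coboundaries form $\mathbb{F}_2[V_{[n]}] / \mathbb{F}_2[V_{[n]}]^{\widetilde{\mathcal{G}}}$, and since the action factors through the translation action of $V_{[n]}$ on its own group ring, the invariants reduce to the one-dimensional line $\mathbb{F}_2 \cdot \sum_{v \in V_{[n]}} v$. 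Consequently
\[
\dim_{\mathbb{F}_2} \mathrm{Cocy}(\widetilde{\mathcal{G}}, \mathbb{F}_2[V_{[n]}]) = (m - n) + (2^n - 1) = (k_1+ \dots +k_n) \cdot 2^{n-1},
\]
which matches the source; combined with injectivity this completes the proof.

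The main obstacle is really the combinatorial bookkeeping in the first dimension count: one must carefully track the overlap between $V_{[(k_1, \dots, k_n)]}^{\vee}$ and the family $\{\chi_A\}_{A \subseteq [n]}$ so as not to double-count the $\chi_{\{i\}}$, and verify independence of the remaining $\chi_A$ via a concrete evaluation. The Shapiro–coboundary calculation on the target side is then a routine application of standard homological algebra.
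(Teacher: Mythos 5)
Your argument is correct, but it reaches surjectivity by a different route than the paper. The paper also dismisses injectivity as clear, and then chases a commutative square: both sides map to $N^{\vee}$ (the left by restriction, the right via $H^1(\widetilde{\mathcal{G}},\mathbb{F}_2[V_{[n]}])$ and Shapiro), so by Proposition \ref{Maps restricts to characters} it suffices to hit the coboundaries, which are exhibited explicitly as images of the functions $\chi_{A}$ using Proposition \ref{Shuffling relation}. You instead close the argument by a dimension count, which trades the commutativity check and the shuffling relation for careful bookkeeping; both routes consume the same two main inputs (Proposition \ref{Maps restricts to characters} and Shapiro's Lemma applied to $\mathbb{F}_2[V_{[n]}]\cong\text{Ind}_N^{\widetilde{\mathcal{G}}}\mathbb{F}_2$), and your computation checks out: both spaces have dimension $(k_1+\dots+k_n)\cdot 2^{n-1}$, consistent with the direct count from the defining basis $\{\Phi_{(A,x)}\}\cup V_{[(k_1,\dots,k_n)]}^{\vee}$ of $\Phi[(k_1,\dots,k_n)]$. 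The paper's version has the advantage of telling you \emph{which} elements map to the coboundaries (namely the $\chi_A$), a fact reused elsewhere; yours is self-contained once the two dimension formulas are in hand. One small imprecision: evaluating at products $g_{y_1}g_{y_2}$ of two generators only separates the $\chi_A$ with $\#A=2$; for $\#A\geq 3$ you need products of $\#A$ generators from distinct blocks (or, more cleanly, note that the $\chi_A$ factor through $V_{[n]}$ and the monomials $\prod_{i\in A}\chi_i$ form part of the standard basis of $\text{Map}(V_{[n]},\mathbb{F}_2)$, while every $\chi_A$ with $\#A\geq 2$ kills all generators and a nonzero character does not). This is routine to repair and does not affect the count.
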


\begin{proof}
The map is clearly injective. We want to show that it is surjective. Both sides naturally map in $N^{\vee}$: the left hand side simply by restriction, the right hand side by mapping onto $H^1(G,\mathbb{F}_2[V_{[n]}])$ and then applying Shapiro's Lemma. One can check that the resulting square is commutative. But then thanks to Shapiro's Lemma (giving an isomorphism between the two $H^1$'s) and Proposition \ref{Maps restricts to characters}, we conclude that we only need to show that the space of coboundaries is in the image. Since the monomials $\{t_B\}_{B \subseteq [n]}$ span the group ring, we find that the space of coboundaries is spanned by the functions
$$\sigma \mapsto \sum_{B \subsetneq B' \subseteq [n]}\chi_{B'-B}(\sigma)t_{B'}.
$$
Thanks to Proposition \ref{Shuffling relation} we get that these are in the image, and we are done. 
\end{proof}

For $\Phi \in \Phi[(k_1, \dots, k_n)]$ we denote by $\text{nil-deg}(\Phi)$ the largest integer $j$ such that $\Phi \in \Phi_j[(k_1, \dots, k_n)]$.
\begin{proposition} 
\label{commuting vectors are realizable}
Let $(\Phi_i)_{1 \leq i \leq n}$ be in $\emph{Comm-Vect}(k_1, \ldots ,k_n)$. Then there exists $\Phi \in \Phi[(k_1, \dots, k_n)]$ such that $P_i(\Phi)=\Phi_i$ for every $i \in [n]$. Such $\Phi$ satisfies
$$d\Phi=\theta((\Phi_i)_{1 \leq i \leq n}).
$$
Furthermore 
$$\emph{max}(\{\emph{nil-deg}(\Phi_i)\}_{i \in [n]}) \leq \emph{nil-deg}(\Phi) \leq \emph{max}(\{\emph{nil-deg}(\Phi_i)\}_{i \in [n]})+1,
$$
with the upper bound reached in case there is at least one $i \in [n]$ with $\emph{nil-deg}(\Phi_i) \geq 2$.  
\end{proposition}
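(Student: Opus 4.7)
The plan is to translate the problem via the cocycle identification of Proposition \ref{space of maps as 1-cocycles}: under that isomorphism, an element $\Phi \in \Phi[(k_1, \dots, k_n)]$ is encoded by its monomial components $\{P_B(\Phi)\}_{B \subseteq [n]}$, viewed as functions on $G := \widetilde{\mathcal{G}}([k_1, \dots, k_n])$ (with $P_\emptyset(\Phi) := \Phi$) satisfying the recursive cocycle relations that express equation (\ref{Universal expansion equation}). Pulling back each $\Phi_i$ along the natural quotient $G \twoheadrightarrow \widetilde{\mathcal{G}}([(k_h)_{h \ne i}])$, the commutativity $P_j(\Phi_i) = P_i(\Phi_j)$ and its iterates fix, for every nonempty $B \subseteq [n]$, an unambiguous function $\phi_B := P_{B \setminus \{i\}}(\Phi_i)$ on $G$, independent of the choice of $i \in B$. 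Thus the commuting vector prescribes every monomial component except the one indexed by $\Phi$ itself, which is what must be produced.

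My first step would be to verify that the cocycle equations for the $\phi_B$ with $B \ne \emptyset$ that do not couple $\phi_\emptyset$ into the system (i.e., those with $S \subsetneq B$ on the right-hand side) follow automatically from the cocycle property of the $\Phi_i$'s together with commutativity. The remaining equations couple $\phi_\emptyset$ into the system via the $S = B$ term; the $B = \{i\}$ case reads $d\phi_{\{i\}}(\sigma, \tau) = \chi_i(\sigma)\phi_\emptyset(\tau)$ and forces
\[
\phi_\emptyset(\tau) = \Phi_i(g_x \tau) + \Phi_i(g_x) + \Phi_i(\tau)
\]
for any involutive generator $g_x$ of $G$ with $x$ in the $i$-th block. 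The main obstacle is to show that this explicit formula is independent of the choices $(i, g_x)$ and compatible with the remaining cocycle equations for $|B| \ge 2$. I would handle this globally by a dimension count. The linear map $P \colon \Phi[(k_1, \dots, k_n)] \to \prod_{i \in [n]} \Phi[(k_h)_{h \ne i}]$, $\Phi \mapsto (P_i(\Phi))_i$, has image in $\text{Comm-Vect}(k_1, \dots, k_n)$ and its kernel is precisely the character space of $G$, as read off from the action of $P_i$ on the generating family $\{\Phi_{(A, x)}\}$ combined with Proposition \ref{Shuffling relation}. Proposition \ref{counting more} together with Theorem \ref{governing group is universal tilde} give
\[
\dim \Phi[(k_1, \dots, k_n)] = (k_1 + \cdots + k_n) \cdot 2^{n-1} - 2^n + 1 + n = \log_2 |\widetilde{\mathcal{G}}([k_1, \dots, k_n])|,
\]
and an inductive count of $\dim \text{Comm-Vect}(k_1, \dots, k_n)$, using the commutativity conditions and the known dimensions of the factor spaces $\Phi[(k_h)_{h \ne i}]$, matches $\dim \Phi[(k_1, \dots, k_n)] - \dim \ker(P)$; hence $P$ surjects onto $\text{Comm-Vect}(k_1, \dots, k_n)$ and $\Phi$ exists. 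The identity $d\Phi = \theta((\Phi_i))$ is then immediate upon reading off equation (\ref{Universal expansion equation}) for $\Phi$ and matching it term by term with the defining formula of $\theta$.

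The nilpotency bounds fall out of Lemma \ref{lComm} and the action of $P_i$ on the generators: since $\Phi_{(A, x)}$ detects nested commutators of length exactly $|A|$ (so $\text{nil-deg}(\Phi_{(A, x)}) = |A|$), and $P_i(\Phi_{(A, x)}) = \Phi_{(A \setminus \{i\}, x)}$ when $i \in A$ and $x \notin f(i)$, vanishing otherwise, the operator $P_i$ strictly lowers nil-degree on nontrivial basis elements. This gives the lower bound $\max_i \text{nil-deg}(\Phi_i) \le \text{nil-deg}(\Phi)$ immediately. For the upper bound, in any decomposition of $\Phi$ in the $\{\Phi_{(A, x)}\} \cup \{\chi_x\}$ family, a nontrivial component $\Phi_{(A, x)}$ with $|A| \ge 2$ yields, by choosing $i \in A$ outside the block of $x$, a nonzero $P_i$-image $\Phi_{(A \setminus \{i\}, x)}$ of support $|A| - 1$, forcing $|A| - 1 \le \max_i \text{nil-deg}(\Phi_i)$ and hence $\text{nil-deg}(\Phi) \le \max_i \text{nil-deg}(\Phi_i) + 1$. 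Sharpness when some $\text{nil-deg}(\Phi_i) \ge 2$ follows because then $\Phi_i$ contains a nontrivial basis component $\Phi_{(A', x')}$ of support $|A'| = \text{nil-deg}(\Phi_i) \ge 2$, and the only way for this to appear as $P_i(\Phi)$ is through a nontrivial $\Phi_{(A' \cup \{i\}, x')}$ component of $\Phi$, yielding $\text{nil-deg}(\Phi) = \max_i \text{nil-deg}(\Phi_i) + 1$.
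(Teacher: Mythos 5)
Your strategy for the existence half is genuinely different from the paper's: the paper inflates the $2$-cocycle $\theta((\Phi_i)_{1\leq i\leq n})$ to the free group $F_{k_1+\dots+k_n}$, uses $H^2(F_{k_1+\dots+k_n},\mathbb{F}_2)=0$ to split it, checks that the resulting homomorphism to $\mathbb{F}_2[V_{[n]}]\rtimes V_{[n]}$ factors through $\widetilde{\mathcal{G}}([k_1,\dots,k_n])$, and then invokes Proposition \ref{space of maps as 1-cocycles}. You instead propose to show that $P=(P_1,\dots,P_n)$ surjects onto $\text{Comm-Vect}(k_1,\dots,k_n)$ by a dimension count. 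That route can be made to work, but as written it has two problems. First, the decisive computation, namely $\dim\text{Comm-Vect}(k_1,\dots,k_n)$, is only asserted to ``match''; it is essentially the content of the statement and has to be carried out (one checks that for each $C\subseteq[n]$ with $\#C\geq 3$ and each $x$ in a block of $C$ the commutation relations glue the coefficients of $\Phi_{(C-\{i\},x)}$ in the various $\Phi_i$ into a single free parameter, giving $\dim\text{Comm-Vect}=(k_1+\dots+k_n)(2^{n-1}-1)$). Second, the dimension you quote for the source is wrong: counting the basis ($k_1+\dots+k_n$ characters plus $\sum_{\#A\geq2}\sum_{h\in A}k_h$ maps $\Phi_{(A,x)}$) gives $\dim_{\mathbb{F}_2}\Phi[(k_1,\dots,k_n)]=(k_1+\dots+k_n)\cdot2^{n-1}$, not $(k_1+\dots+k_n)2^{n-1}-2^n+1+n=\log_2\#\widetilde{\mathcal{G}}([k_1,\dots,k_n])$; the discrepancy $2^n-1-n$ is the span of the $\chi_A$ with $\#A\geq2$ inside the kernel of restriction to the commutator subgroup (Proposition \ref{Maps restricts to characters}). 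With your figure, $\dim\Phi[(k_1,\dots,k_n)]-\dim\ker P$ falls short of $\dim\text{Comm-Vect}$ by $2^n-1-n$, so surjectivity of $P$ would not follow from your count.

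The second genuine gap is in the nilpotency bounds: you repeatedly identify $\text{nil-deg}(\Phi)$ with the largest $\#A$ occurring in the basis expansion of $\Phi$, but Proposition \ref{Shuffling relation} shows these differ: $\chi_{[n]}=\sum_{i\in[n]}\sum_{x\in f(i)}\Phi_{([n],x)}$ has full support in degree $n$ yet $\text{nil-deg}(\chi_{[n]})=1$. This breaks your upper-bound step concretely: for $\Phi=\chi_{[n]}$ with $n\geq3$ the component $\Phi_{([n],x)}$ is nontrivial and $P_i(\Phi)$ contains the nonzero basis element $\Phi_{([n]-\{i\},x)}$, yet $\max_i\text{nil-deg}(P_i(\Phi))=\text{nil-deg}(\chi_{[n]-\{i\}})=1$, so the inference that $\#A-1\leq\max_i\text{nil-deg}(\Phi_i)$ is false; your sharpness argument fails for the same reason. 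The paper avoids this entirely by using the identity $\Phi([g_h,\tau])=P_i(\Phi)(\tau)$ for $h\in f(i)$ and $\tau$ in the commutator subgroup, together with Proposition \ref{Augmentation=descending central}, which shows that $\widetilde{\mathcal{G}}([k_1,\dots,k_n])^{(c+1)}$ is spanned by such elements $[g_h,\tau]$ with $\tau\in\widetilde{\mathcal{G}}([k_1,\dots,k_n])^{(c)}$; this yields both bounds and the sharpness statement directly, and you should substitute it for the support-counting argument.
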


\begin{proof}
When we inflate to the free group $F_{k_1+ \dots + k_n}$, the $2$-cocycle $\theta((\Phi_i)_{1 \leq i \leq n})$ becomes a coboundary, since evidently $H^2(F_{k_1 + \dots + k_n},\mathbb{F}_2)=\{0\}$. Indeed, any central $\mathbb{F}_2$-extension of $F_{k_1 + \dots + k_n}$ is split: by definition of a free group, one can create a group theoretic section simply extending any set-theoretic sections on the set of privileged generators in the unique possible way. Therefore we obtain a system of maps $\{\phi_{B}\}_{B \subseteq [n]}$ corresponding to the group homomorphism
$$
F_{k_1+ \dots+k_n} \to \mathbb{F}_2[V_{[n]}] \rtimes V_{[n]}
$$
with $\phi_B=P_B((\Phi_i)_{i \in [n]})$ for $B \subseteq [n]$ of size less than $n-1$ and $\phi_{[n]-\{i\}}=\Phi_i$ for each $i$ in $[n]$. Now we verify that this homomorphism factors through $\widetilde{\mathcal{G}}([k_1, \ldots, k_n])$. This is a routine verification. For instance it is clear that the square of every commutator vanishes, from the shape of the $2$-cocycle $\theta((\Phi_i)_{1 \leq i \leq n})$. Similarly we see that the square of each privileged generator vanishes as does the square of the product of two generators in the same block. Therefore the collection $\{\phi_{B}\}_{B \subseteq [n]}$ lands in $\text{Cocy}(\widetilde{\mathcal{G}}([k_1, \ldots, k_n]),\mathbb{F}_2[V_{[n]}])$. Now we conclude immediately by Proposition \ref{space of maps as 1-cocycles}.

Let us now prove the last claim. 
If $\tau \in [\widetilde{\mathcal{G}}([k_1, \dots, k_n]),\widetilde{\mathcal{G}}([k_1, \dots, k_n])]$ and $i \in [n]$, then 
\begin{align}
\label{lowering equation}
([g_h,\tau])=P_i(\Phi)(\tau)
\end{align}
for every $h \in f(i)$. It follows that for every $i \in [n]$ with $\text{nil-deg}(\Phi_i) \geq 2$ we can find a nested commutator of length $\text{nil-deg}(\Phi_i)+1$ on which $\Phi$ does not vanish. Hence we see that $\text{max}(\{\text{nil-deg}(\Phi_i)\}_{i \in [n]}) \leq \text{nil-deg}(\Phi)$ in general and as soon as there is at least one $i_0 \in [n]$ with $\text{nil-deg}(\Phi_{i_0}) \geq 2$ then equation (\ref{lowering equation}) proves that 
$$
\text{nil-deg}(\Phi) \geq \text{max}(\{\text{nil-deg}(\Phi_i)\}_{i \in [n]})+1.
$$
We deduce from equation (\ref{lowering equation}) that $\Phi$ vanishes on any nested commutator of length at least $c:=\text{max}(\{\text{nil-deg}(\Phi_i)\}_{i \in [n]})+1$ and of the form $[g_h,\tau]$ for some $h$. On the other hand these span the $\mathbb{F}_2$-vector space $\widetilde{\mathcal{G}}([k_1, \dots, k_n])^{(c+1)}$ thanks to Proposition \ref{Augmentation=descending central}. Hence we obtain
$$
\text{nil-deg}(\Phi) \leq \text{max}(\{\text{nil-deg}(\Phi_i)\}_{i \in [n]})+1,
$$
which yields the full proposition.  
\end{proof}

Let now $(G, \phi,(g_1, \dots, g_{k_1+ \dots + k_n}))$ be a $[(k_1, \dots, k_n)]$-expansion group. Recall that by Theorem \ref{governing group is universal tilde}, we have that $(G, \phi,(g_1, \dots, g_{k_1+ \dots + k_n}))$ is (uniquely) a quotient of the $[(k_1, \dots, k_n)]$-expansion group $\widetilde{\mathcal{G}}([k_1, \dots, k_n])$. This gives us a subspace of $\Phi[(k_1, \dots, k_n)]$ consisting of the functions that are well defined modulo the projection onto $G$. We denote it as $\Phi(G, \phi,(g_1, \dots, g_{k_1+ \dots + k_n}))$. Observe that this space uniquely determines the $[(k_1, \dots, k_n)]$-expansion group $(G, \phi,(g_1, \dots, g_{k_1+ \dots + k_n}))$. Indeed the restriction map 
$$
\Phi[(k_1, \dots, k_n)] \twoheadrightarrow [\widetilde{\mathcal{G}}([k_1, \dots, k_n]), \widetilde{\mathcal{G}}([k_1, \dots, k_n])]^{\vee}
$$
is surjective. Then the group kernel of the projection from $\widetilde{\mathcal{G}}([k_1, \dots, k_n])$ to $G$, which must be a linear subspace of $[\widetilde{\mathcal{G}}([k_1, \dots, k_n]), \widetilde{\mathcal{G}}([k_1, \dots, k_n])]$,  is equal to the maximal normal subgroup $N$ of $\widetilde{\mathcal{G}}([k_1, \dots, k_n])$ such that all elements of $\Phi(G, \phi,(g_1, \dots, g_{k_1+ \dots + k_n}))$ factor through $N$. So determining $(G, \phi,(g_1, \dots, g_{k_1+ \dots + k_n}))$ amounts to determining the space $\Phi(G, \phi,(g_1, \dots, g_{k_1+ \dots + k_n}))$. Now consider the space of commuting vectors $(\Phi_i)_{1 \leq i \leq n}$ such that $\Phi_i \in \Phi(G, \phi,(g_1, \dots, g_{k_1+ \dots + k_n})_i)$. We denote it with
$$
\text{Comm-Vect}(\{(G, \phi,(g_1, \dots, g_{k_1+ \dots + k_n}))_i\}_{i \in [n]}).
$$
We stress that this space is constructed merely invoking the set of $n$ corners of the $[(k_1, \dots, k_n)]$-expansion group $(G, \phi,(g_1, \dots, g_{k_1+ \dots + k_n}))$. We denote by
$$
2-\text{Cocy}(\{\Phi(G, \phi,(g_1, \dots, g_{k_1+ \dots + k_n}))_i\}_{i \in [n]})
$$ 
the corresponding set of $2$-cocycles constructed as explained above, but this time only using elements of $\text{Comm-Vect}(\{(G, \phi,(g_1, \dots, g_{k_1+ \dots + k_n}))_i\}_{i \in [n]})$. Again, we stress that this space of $2$-cocycles is constructed using only the set of $n$-corners of the $[(k_1, \dots, k_n)]$-expansion group $(G, \phi,(g_1, \dots, g_{k_1+ \dots + k_n}))$.

Intersecting $\Phi(G, \phi,(g_1, \dots, g_{k_1+ \dots + k_n}))$ with the nilpotency filtration induces a filtration of spaces 
$$\{0\}=\Phi_0(G, \phi,(g_1, \dots, g_{k_1+ \dots + k_n})) \subseteq \dots \subseteq \Phi_j(G, \phi,(g_1, \dots, g_{k_1+ \dots + k_n})) \subseteq \dots
$$
exhausting the space $\Phi(G, \phi,(g_1, \dots, g_{k_1+ \dots + k_n}))$, which is actually equal to 
$$
\Phi_{c(G)}(G, \phi,(g_1, \dots, g_{k_1+ \dots + k_n})),
$$ 
where $c(G)$ is the nilpotency class of $G$. We denote by 
$$\text{Comm-Vect}_j(\{(G, \phi,(g_1, \dots, g_{k_1+ \dots + k_n}))_i\}_{i \in [n]})
$$ the subspace of $\text{Comm-Vect}(\{(G, \phi,(g_1, \dots, g_{k_1+ \dots + k_n}))_i\}_{i \in [n]})$, where all the coordinates have $\text{nil-deg}$ at most $j-1$. 
 
Now combining Proposition \ref{Maps restricts to characters}, equation (\ref{Universal expansion equation}) and Proposition \ref{commuting vectors are realizable}, we can answer the question at the beginning of this section. Namely if $j \in \mathbb{Z}_{\geq 2}$, to obtain the space 
$$
\Phi_j(G, \phi,(g_1, \dots, g_{k_1+ \dots + k_n}))
$$ 
one needs to do four steps.\footnote{The case $j=1$, as remarked above, equals the span of the characters and the maps $\chi_{A}$ so it is given.} \\
$\textbf{Step 1}:$ Obtain the spaces $\Phi_{j-1}(G, \phi,(g_1, \dots, g_{k_1+ \dots + k_n}))_i$ for every $i \in [n]$. \\
$\textbf{Step 2}:$ Check which of the $2$-cocycles obtained from 
$$\text{Comm-Vect}_j(\{(G, \phi,(g_1, \dots, g_{k_1+ \dots + k_n}))_i\}_{i \in [n]})
$$ is trivial in $H^2(G,\mathbb{F}_2)$. \\
$\textbf{Step 3}:$ For each of the elements obtained in Step $2$, any corresponding $1$-cochain factors through $G$ and hence yields an element of $\Phi_j(G, \phi,(g_1, \dots, g_{k_1+ \dots + k_n}))$. \\
$\textbf{Step 4}:$ The collection of all the elements of $\Phi_j(G, \phi,(g_1, \dots, g_{k_1+ \dots + k_n}))$ obtained in Step $3$ span the space $\Phi_j(G, \phi,(g_1, \dots, g_{k_1+ \dots + k_n}))$.

Note that it is not at all obvious how to perform \textbf{Step 2}, since the unknown group $G$ occurs in this step. Fortunately, there will be a simple universal criterion to perform $\textbf{Step 2}$ in our arithmetical application. Namely, the cohomology classes will vanish if and only if their inflation to $G_{\mathbb{Q}}$ does. This fact relies on the special form of the $2$-cocycles appearing on the right hand side of equation (\ref{Universal expansion equation}). Therefore the above abstract procedure will boil down to an effective iterative procedure to construct the $j$-th higher genus space of a multi-quadratic field $K$ out of the $(j-1)$-th higher genus spaces of the fields $\{K_{[n]-\{i\}}\}_{i \in [n]}$. 

\section{Proofs of main theorems} 
\label{Expansion arithmetic}
\subsection{Proof of Theorems \ref{theoremA}, \ref{controlling gn} and \ref{Presenting the group}}
Let $(a_1, \dots, a_n)$ be an acceptable vector and denote $k_i:=\omega(a_i)$. Recall that if $i \in [n]$ and $p$ is a prime dividing $a_i$, then $p$ has ramification degree equal to $2$ in $H_2^{+}(a_1, \dots, a_n)/\mathbb{Q}$. Therefore any inertia subgroups at $p$ is a subgroup of size equal to $2$. So each of them has precisely one non-trivial element, and by a choice of inertia at $p$ we mean the choice of such an involution in $\text{Gal}(H_2^{+}(a_1, \dots, a_n)/\mathbb{Q})$. Now write 
$$
a_i:=p_{h_i + 1} \cdot \ldots \cdot p_{h_i+ k_i},
$$
where $h_i:=\sum_{1 \leq j \leq i - 1} k_j$. In this way there is a bijection between $[k_1+ \dots + k_n]$ and the prime factors of $a_1 \cdot \ldots \cdot a_n$. The following clarifies the relevance of $[(k_1, \dots, k_n)]$-expansion groups when looking at $\text{Gal}(H_2^{+}(a_1, \dots, a_n)/\mathbb{Q})$.

\begin{proposition} 
\label{Galois groups of H2(a1,.., An) is expansion}
Choose for every $j \in [k_1+ \dots + k_n]$ an inertia element $\sigma_j$ at $p_j$. Then 
$$(\emph{Gal}(H_2^{+}(a_1, \dots, a_n)/\mathbb{Q}),(\chi_{p_1}, \dots, \chi_{p_{k_1+ \dots + k_n}}), (\sigma_1, \dots, \sigma_{k_1+ \dots + k_n})),
$$
is a $[(k_1, \dots, k_n)]$-expansion group. 
\end{proposition}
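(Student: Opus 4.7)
Set $G := \Gal(H_2^+(a_1, \dots, a_n)/\mathbb{Q})$, $K := \mathbb{Q}(\sqrt{a_1}, \dots, \sqrt{a_n})$, and $\phi := (\chi_{p_1}, \dots, \chi_{p_{k_1+\dots+k_n}})$. The plan is to verify in turn the four axioms of Definition \ref{Expansion of groups} for a $(k_1+\dots+k_n)$-expansion, together with the additional constraint of Definition \ref{definition of (k1, ... ,kn)-expansion groups}. A preliminary point that must be addressed throughout is that each $\chi_{p_j}$ really factors through $G$, equivalently that $\mathbb{Q}(\sqrt{p_j}) \subseteq H_2^+(K)$. This is the classical genus-field statement and follows from $p_j \equiv 1 \bmod 4$ via a local check that $K(\sqrt{p_j})/K$ is unramified at every finite place; the case of primes above $p_j$ uses that, writing $a_i = p_j \cdot (a_i/p_j)$ for the unique $i$ with $p_j \mid a_i$, the ramification of $\mathbb{Q}(\sqrt{p_j})/\mathbb{Q}$ at $p_j$ gets absorbed into the already-ramified extension $\mathbb{Q}(\sqrt{a_i})/\mathbb{Q}$.

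With this in hand, three of the four axioms and the extra condition are short. Axiom $(4)$ holds because $\sigma_i$ generates an order-$2$ inertia subgroup at $p_i$. Axiom $(1)$ amounts to $\chi_{p_j}(\sigma_i) = \delta_{ij}$: the character $\chi_{p_j}$ kills inertia at $p_i$ for $i \neq j$ since $\mathbb{Q}(\sqrt{p_j})/\mathbb{Q}$ is unramified away from $p_j$, while it is non-trivial on inertia at $p_j$. Axiom $(2)$ follows because $\ker(\phi)$ equals $\Gal(H_2^+(K)/F)$, where $F$ denotes the compositum of the $\mathbb{Q}(\sqrt{p_j})$; since $K \subseteq F$, this is a subgroup of $\Gal(H_2^+(K)/K)$, which is $\mathbb{F}_2$-elementary abelian by the very definition of $H_2^+$. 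The extra $[(k_1, \dots, k_n)]$-condition unwinds identically: using $\chi_{a_i} = \sum_{j \in f(i)} \chi_{p_j}$ one identifies $\phi^{-1}(\ker \pi_{(k_1, \dots, k_n)})$ with $\Gal(H_2^+(K)/K)$, which is again elementary abelian.

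The one substantive obstacle is Axiom $(3)$, namely $[G, G] = \ker(\phi)$. The inclusion $[G, G] \subseteq \ker(\phi)$ is automatic from abelianness of the target. For the reverse, since $\phi$ is surjective (by Axiom $(1)$) onto an $\mathbb{F}_2$-vector space of dimension $k_1+\dots+k_n$, it suffices to show that $G^{\mathrm{ab}}$ has $\mathbb{F}_2$-rank at most $k_1+\dots+k_n$, i.e., that the maximal abelian subextension of $H_2^+(K)/\mathbb{Q}$ has degree at most $2^{k_1+\dots+k_n}$. This is where the hypothesis that every prime factor of every $a_i$ is $\equiv 1 \bmod 4$ is genuinely needed: it forces $K/\mathbb{Q}$, and therefore $H_2^+(K)/\mathbb{Q}$, to be unramified at $2$, so every quadratic subextension must be $\mathbb{Q}(\sqrt{d})$ with $d$ squarefree, $d \equiv 1 \bmod 4$, and with prime support in $\{p_1, \dots, p_{k_1+\dots+k_n}\}$. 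Such $d$ must be a positive product of some subset of the $p_j$'s, ruling out $\sqrt{-1}$ and every $\sqrt{-\prod_{j \in S} p_j}$. The maximal abelian subextension is therefore exactly $F$, giving $\ker(\phi) = \Gal(H_2^+(K)/F) = [G, G]$ and completing the verification of the proposition.
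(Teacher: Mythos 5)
Your overall route is the same as the paper's: verify the four axioms of Definition \ref{Expansion of groups} plus the extra condition of Definition \ref{definition of (k1, ... ,kn)-expansion groups} directly, with the only substantive input being the identification of the maximal abelian subextension of $H_2^{+}(a_1,\dots,a_n)/\mathbb{Q}$ with the genus field $F=\mathbb{Q}(\sqrt{p_1},\dots,\sqrt{p_{k_1+\dots+k_n}})$. Your treatment of axioms $(1)$, $(2)$, $(4)$, of the extra $[(k_1,\dots,k_n)]$-condition, and of the containment $F\subseteq H_2^{+}(a_1,\dots,a_n)$ (where $p_j\equiv 1 \bmod 4$ is used) is correct and matches the paper.

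There is, however, a genuine gap in your verification of Axiom $(3)$. You write that it ``suffices to show that $G^{\mathrm{ab}}$ has $\mathbb{F}_2$-rank at most $k_1+\dots+k_n$, i.e., that the maximal abelian subextension of $H_2^+(K)/\mathbb{Q}$ has degree at most $2^{k_1+\dots+k_n}$,'' but these two statements are not equivalent, and your subsequent argument only proves the first. Enumerating the quadratic subextensions of $H_2^{+}(K)/\mathbb{Q}$ counts the index-$2$ subgroups of $G^{\mathrm{ab}}$, i.e., it bounds $\dim_{\mathbb{F}_2} G^{\mathrm{ab}}/2G^{\mathrm{ab}}$; it does not exclude, say, $G^{\mathrm{ab}}\simeq \mathbb{Z}/4\times(\mathbb{Z}/2)^{k-1}$, which has the same number of quadratic subextensions as $(\mathbb{Z}/2)^{k}$ but for which $\phi$ would fail to be injective on $G^{\mathrm{ab}}$ and Axiom $(3)$ would fail. (Note that $G$ is an extension of $\mathbb{F}_2^{n}$ by an elementary abelian group, so a priori it only has exponent dividing $4$.) What is missing is the observation that $G^{\mathrm{ab}}$ has exponent $2$. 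The standard way to see this is that $G$ is generated by its inertia subgroups at finite primes: the fixed field of the subgroup they generate is unramified at all finite places over $\mathbb{Q}$, hence equals $\mathbb{Q}$ since the narrow class number of $\mathbb{Q}$ is $1$. Each such inertia subgroup has order $2$, and inertia subgroups above the same $p_j$ are conjugate, so $G^{\mathrm{ab}}$ is generated by $k_1+\dots+k_n$ involutions; this gives $\#G^{\mathrm{ab}}\leq 2^{k_1+\dots+k_n}$ outright and makes your ramification-at-$2$ count of quadratic subextensions unnecessary for the upper bound. This is in effect the content of the paper's one-line assertion that the largest abelian subextension is the multi-quadratic field generated by the $\sqrt{p_j}$.
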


\begin{proof}
As we explained at the beginning of this section, the inertia elements are involutions. Furthermore, they form a dual basis of $\{\chi_{p_1}, \dots, \chi_{p_{k_1+ \dots + k_n}}\}$, since the $a_i$ are pairwise coprime. The largest abelian subextension of $\mathbb{Q}$ contained in $H_2^{+}(a_1, \dots, a_n)/\mathbb{Q}$ is the multi-quadratic number field obtained by adjoining all the square roots of the prime factors of the various $a_i$, thanks to the fact that each of them is $1$ modulo $4$. 

This means that the commutator subgroup is the kernel of $(\chi_{p_1}, \dots, \chi_{p_{k_1+ \dots + k_n}})$. Finally, by definition, the group $\text{Gal}(H_2^{+}(a_1, \dots, a_n)/\mathbb{Q}(\sqrt{a_1}, \dots, \sqrt{a_n}))$ is a $\mathbb{F}_2$-vector space. This concludes the proof. 
\end{proof}

Now Theorem \ref{Presenting the group} follows immediately upon combining Theorem \ref{governing group is universal tilde} and Proposition \ref{Galois groups of H2(a1,.., An) is expansion}. Hence Theorem \ref{theoremA} falls as a consequence of Theorem \ref{Presenting the group}. 

To prove Theorem \ref{controlling gn}, we recall that a character $\chi \in \text{Cl}^{+}(\Q(\sqrt{a_1}, \dots, \sqrt{a_n}))^{\vee}[2]$ belongs to $\text{Gn}(K, j)$ if it vanishes on all $(j + 1)$-th nested commutators with entries in $G_\Q$. We can turn $\text{Gal}(H_2^{+}(a_1, \dots, a_n)/\mathbb{Q})$ in an $n$-expansion group $G$ by Proposition \ref{Galois groups of H2(a1,.., An) is expansion} and then attach to it its Lie algebra $L_\bullet$. The case $j = 1$ is classical, so suppose that $j > 1$. Then there is a natural map $\text{Gn}(K, j) \rightarrow L_j^\vee$ by restricting $\chi$ to $G^{(j)}$, which is well-defined since $\chi$ vanishes on $G^{(j + 1)}$ and $j > 1$. Since $\text{Gn}(K, j - 1)$ is contained in the kernel of this map, Theorem \ref{controlling gn} then follows from Proposition \ref{counting more}, Proposition \ref{Consistent tensors are governing tensors more general} and Proposition \ref{expansion bracket give tilde consistent tensors}.

\subsection{Proof of Theorem \ref{theoremC}}
We say that $(a_1, \dots, a_n)$ is strongly quadratically consistent if every prime divisor of $a_i$ is a square modulo every prime divisor of $a_j$ for distinct $i, j \in [n]$. 

\begin{proposition} 
\label{quadratically consistent}
Suppose that $(a_1, \dots, a_n)$ is maximal. Then $(a_1, \dots, a_n)$ is strongly quadratically consistent.
\end{proposition}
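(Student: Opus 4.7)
The plan is to prove this by contraposition via a Frobenius--inertia commutation argument, exploiting the universal structure provided by Theorem~\ref{Presenting the group}. Assume $(a_1,\dots,a_n)$ is maximal. Then Theorem~\ref{Presenting the group} yields an isomorphism $G := \text{Gal}(H_2^+(a_1,\dots,a_n)/\mathbb{Q}) \cong \widetilde{\mathcal{G}}([(k_1,\dots,k_n)])$; consequently by Theorem~\ref{Governing algebra is the universal algebra} and Proposition~\ref{Consistent tensors are governing tensors more general} the degree-$2$ piece of the associated graded Lie algebra, $L_2(G) = G^{(2)}/G^{(3)}$, is canonically identified with $\widetilde{\text{Gov}}(V_{[(k_1,\dots,k_n)]},2)^{\vee}$.

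For any prime $q \mid a_j$ the extension $H_2^+(a_1,\dots,a_n)/\mathbb{Q}$ is tamely ramified at $q$ (ramification index $2$ and odd residue characteristic), hence any Frobenius lift $\phi_q \in G$ commutes with the inertia involution $\sigma_q = g_q$, i.e.\ $[\phi_q,\sigma_q]=1$. Artin reciprocity on the abelianization $G^{\text{ab}}=V_{[k_1+\cdots+k_n]}$ allows us to choose the lift so that
\[
\phi_q \equiv \prod_{p \neq q} g_p^{\,\varepsilon(p,q)} \pmod{[G,G]},
\]
where $\varepsilon(p,q) := \chi_p(\text{Frob}_q) \in \mathbb{F}_2$ (so $\varepsilon(p,q)=0$ iff $p$ is a square modulo $q$). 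Expanding $[\phi_q,\sigma_q]=1$ modulo $G^{(3)}$ using the standard commutator identity $[xy,z]\equiv[x,z][y,z] \pmod{G^{(3)}}$, and dropping the brackets of same-block generators by axiom~(2) of Definition~\ref{definition of (k1, ... ,kn)-expansion Lie algebra} at $j=2$, one obtains
\[
\sum_{p \notin f(j)} \varepsilon(p,q) \,[g_p, g_q] \;=\; 0 \qquad \text{in } L_2(G).
\]

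The core of the argument is to show that under the identification $L_2(G)\cong\widetilde{\text{Gov}}(V,2)^{\vee}$ this relation can hold only if $\varepsilon(p,q)=0$ for every $p \notin f(j)$. To this end I would parametrize $\widetilde{\text{Gov}}(V,2)$ block-pair by block-pair: for each pair of distinct blocks $(s,t)$, rule~(2) of Definition~\ref{furtherly constrained tensors} amounts to the double-difference identity $\beta(e_{p_1}+e_{p_2},e_{q_1}+e_{q_2})=0$ for $p_i\in f(s)$, $q_i\in f(t)$, so after fixing reference primes $p_0^{(s)}\in f(s)$, $p_0^{(t)}\in f(t)$ the cross-block values $\beta(e_p,e_q)$ are freely parametrized by the ``row'' values $\beta(e_p,e_{p_0^{(t)}})$ with $p\in f(s)\setminus\{p_0^{(s)}\}$, the ``column'' values $\beta(e_{p_0^{(s)}},e_q)$ with $q\in f(t)\setminus\{p_0^{(t)}\}$, and the single ``corner'' value $\beta(e_{p_0^{(s)}},e_{p_0^{(t)}})$, recovering the count $k_s+k_t-1$ of Proposition~\ref{counting more}. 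Substituting this into the Frobenius identity for the fixed $q$ shows that the coefficient of each free row parameter associated with $p \in f(s)\setminus\{p_0^{(s)}\}$ (for $s\neq j$) is exactly $\varepsilon(p,q)$; a short parallel check on the column and corner coefficients forces $\varepsilon(p_0^{(s)},q)=0$ as well. Running this analysis for every $j$ and every $q\mid a_j$ yields strong quadratic consistency.

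The main obstacle is the independence analysis in the preceding paragraph: one must confirm that the Frobenius-derived relation is genuinely an additional constraint, not already a consequence of the governing-tensor relations built into $\widetilde{\text{Gov}}(V,2)$. This succeeds because the Gov relations only couple values within a single block pair, while the Frobenius relation at $q$ couples many block pairs through the common factor $g_q$; the reference-based parametrization then makes each coefficient $\varepsilon(p,q)$ visible on its own independent free parameter, so any nontrivial $\varepsilon(p,q)$ strictly reduces $\dim L_2(G)$ below $\dim\widetilde{\text{Gov}}(V,2)$ and contradicts maximality via Theorem~\ref{governing group is universal tilde}.
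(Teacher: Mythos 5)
Your argument is correct, but it proves the proposition by a genuinely different mechanism than the paper. The paper's proof is a three-line cohomological argument: maximality supplies, for each $i \neq j$ and each prime $q \mid a_j$, an expansion map with support $\{\chi_{a_i}, \chi_q\}$ and pointer $\chi_q$, whose existence forces the cup product $\chi_{a_i} \cup \chi_q$ to vanish in $H^2(G_{\mathbb{Q}}, \mathbb{F}_2)$; localizing at a prime $p \mid a_i$, where $\chi_{a_i}$ ramifies, then forces $\chi_q$ to be locally trivial, i.e.\ $q$ is a square modulo $p$ (equivalent to your conclusion by quadratic reciprocity, as all primes involved are $1 \bmod 4$). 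You instead work on the group side: the tame relation $[\phi_q,\sigma_q]=1$ yields $\sum_{p}\varepsilon(p,q)[g_p,g_q]=0$ in $L_2(G)$, and maximality identifies $L_2(G)$ with $\widetilde{\text{Gov}}(V_{[(k_1,\dots,k_n)]},2)^{\vee}$, where the classes $[g_p,g_q]$ (for fixed $q$ and $p$ ranging outside the block of $q$) are linearly independent. Your independence analysis is sound, though your reference-prime parametrization is more laborious than necessary: pairing the relation directly against the generating tensor $\widetilde{\phi}_{(\{s,j\},\{p'\})}$, which evaluates to $\delta_{p,p'}$ on $(e_p,e_q)$, kills each $\varepsilon(p',q)$ in one line. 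The two proofs are local--global duals of one another --- the local triviality of $\chi_p \cup \chi_q$ at $q$ is precisely the statement that $\mathrm{Frob}_q$ acts trivially on $\mathbb{Q}(\sqrt{p})$ --- so each buys a different viewpoint: the paper's is shorter and needs only the existence of the degree-$2$ expansion maps, while yours exhibits the quadratic-residue conditions as explicit linear relations in the degree-$2$ graded piece of the Galois group, which makes visible exactly how a failure of consistency would cut down $\dim L_2$ below the universal bound. One small citation slip: the identification of $L_2$ should be attributed to the $[(k_1,\dots,k_n)]$-analogue of Theorem \ref{Governing algebra is the universal algebra} developed before Theorem \ref{governing group is universal tilde}, not to the $n$-expansion statement itself; the substance is unaffected.
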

 
\begin{proof}
Indeed, we must have an expansion map with support $\{\chi_{a_i}, \chi_q\}$ and pointer $\chi_q$ for every prime $q$ dividing $a_j$, sine $(a_1, \dots, a_n)$ is maximal. The existence of such an expansion map is equivalent to $\chi_{a_i} \cup \chi_q$ being trivial in $H^2(G_{\mathbb{Q}}, \mathbb{F}_2)$. Going locally at a prime divisor $p$ of $a_i$, we see that $\chi_q$ must then become locally trivial, i.e. $q$ is a square modulo $p$. 
\end{proof}

We can now complete the proof of Theorem \ref{theoremC}. Thanks to Theorem \ref{governing group is universal tilde}, we know that $(a_1, \dots, a_n)$ is maximal if and only if for every $i_0 \in [n]$, every $b$ dividing $a_{i_0}$ and every $A \subseteq [n]-\{i_0\}$, we can find an expansion map for $G_{\mathbb{Q}}$ with support $\{\chi_{a_i}\}_{i \in A} \cup \{\chi_b \}$ and pointer $\chi_{b}$ inducing an unramified extension of $\mathbb{Q}(\{\sqrt{a_i}\}_{i \in A}, \sqrt{b})$. From this it is transparent that if $(a_1, \dots, a_n)$ is maximal then, for each $j$ in $[n]$, the vector $(a_h)_{h \neq j}$ is maximal. 

Furthermore, thanks to Proposition \ref{quadratically consistent} we see that for every $j \in A$ and each prime divisor $p$ of $a_j$ the right hand side of equation (\ref{eSmith}) is trivial in $H^2(G_{\mathbb{Q}_p},\mathbb{F}_2)$ if and only if $p$ splits completely in the field corresponding to the expansion map $\phi_{A-\{j\}}$ with pointer $\chi_b$. Since this holds for all $A$ and $b$, we conclude that $p$ splits completely in $H_2^{+}((a_h)_{h \neq j})$. Hence we have shown that if $(a_1, \dots, a_n)$ is maximal, then for every $j \in [n]$, the vector $(a_h)_{h \neq j}$ is maximal and each prime divisor of $a_j$ splits completely in $H_2^{+}((a_h)_{h \neq j})$. 

To obtain the converse one inverts the logic above and obtains that if for every $j \in [n]$ it is given that $(a_h)_{h \neq j}$ is maximal and each prime divisor of $a_j$ splits completely in $H_2^{+}((a_h)_{h \neq j})$, then the $2$-cocycles appearing on the right hand side of equation (\ref{eSmith}) are all everywhere locally trivial and hence trivial in $H^2(G_{\mathbb{Q}}, \mathbb{F}_2)$. Hence, to conclude, in virtue of Theorem \ref{governing group is universal tilde} one needs to show that each of the corresponding twist families of $\mathbb{F}_2$-central extensions admits an unramified representative. This can be obtained with a straightforward adaptation to $l=2$ of \cite[Proposition $4.10$]{koymans--pagano}.

\subsection{\texorpdfstring{Obtaining $\text{Gn}(K_{[n]}, j)$ from $(\text{Gn}(K_{[n]-\{i\}}, j - 1))_{i \in [n]}$}{Obtaining the next genus space}} 
\label{procedure: arithmetic}
We begin with the elementary observation that for all acceptable vectors $(a_1, \dots, a_n) \in \mathbb{Z}_{\geq 1}^{n}$, all $i \in [n]$, and for any choice of inertia elements in $\text{Gal}(H_2^{+}(a_1, \dots, a_n)/\mathbb{Q})$, we have that the $i$-th corner of $\text{Gal}(H_2^{+}(a_1, \dots, a_n)/\mathbb{Q})$ is $\text{Gal}(H_2^{+}((a_h)_{h \in [n]: h \neq i})/\mathbb{Q})$, where we view this last Galois group as an $[(\omega(a_h))_{h \in [n]: h \neq i}]$-expansion group with the choice of inertia elements induced by the choices made for $\text{Gal}(H_2^{+}(a_1, \dots, a_n)/\mathbb{Q})$.

Hence to show Theorem \ref{Last thm}, all that we need to prove is that $\textbf{Step 2}$ in the procedure at the end of Section \ref{Reconstructing from corners} can be replaced by the definition of $\text{Comm-Vect}_j^{\circ}(a_1, \dots, a_n)$ given in the introduction. In other words we have to show that as soon as we have a commutative vector $\underline{v} \in \text{Comm-Vect}_j^{\circ}(a_1, \dots, a_n)$, then there is a continuous $1$-cochain $\Phi:G_{\mathbb{Q}} \to \mathbb{F}_2$ such that
$$
(d\Phi)=\theta(\underline{v}),
$$
and the smallest Galois extension of $\Q$ through which $\Phi$ factors, called \emph{the field of definition} of $\Phi$ and denoted $L(\Phi)$, yields an \emph{unramified} central $\mathbb{F}_2$-extension of the field $L(\underline{v})$, where $L(\underline{v})$ denotes the compositum of the fields of definitions of the coordinates of $\underline{v}$. By definition, we certainly have a continuous $1$-cochain $\Phi_0:G_{\mathbb{Q}} \to \mathbb{F}_2$ such that
$$
(d\Phi_0)=\theta(\underline{v}).
$$
So we have to find a character $\chi:G_{\mathbb{Q}} \to \mathbb{F}_2$ such that $\Phi:=\Phi_0+\chi$ has the ramification condition just explained. Following the same argument provided in \cite[Proposition $4.10$]{koymans--pagano}, it is enough to check that the cocycle $\theta(\underline{v})$ splits locally at inertia for every prime $p$ dividing $a_1 \cdot \ldots \cdot a_n$. Pick such a prime $p$, and let $i$ be the corresponding element of $[n]$ such that $p$ divides $a_i$ (recall that $a_1, \dots, a_n$ are pairwise coprime). Pick an inertia element $\sigma_p$ in $\text{Gal}(L(\underline{v})/\mathbb{Q})$. 

We claim that in the sum of the $2^n - 1$ terms defining $\theta(\underline{v})(\sigma, \tau)$, each of these terms will vanish when we plug in $\sigma:=\sigma_p$ and $\tau:=\sigma_p$. Indeed, if a non-empty subset $B \subseteq [n]$ does not contain $i$, then certainly $\chi_B(\sigma_p)=0$. Assume now that $i \in B$. Then the $P_{B-\{i\}}(v_i)$ has field of definition contained in $L(v_i)$, where $v_i$ is the $i$-th coordinate of $\underline{v}$. These maps are in particular in $\Phi_{j-1}((a_h)_{h \neq i})$, so the field $L(v_i)/\mathbb{Q}$  is unramified at $p$. It follows that the image of $\sigma_p$ in $\text{Gal}(L(v_i)/\mathbb{Q})$ equals the trivial element $\text{id}$. But now, by construction, the map $P_{B-\{i\}}(v_i)$ vanishes on the identity element. Hence our claim follows. 

Therefore it follows that in the central extension defined by $\theta(\underline{v})$ the inertia element $\sigma_p$ must necessarily lift to an involution. Hence the extension does not ramify, since any potential ramification (recalling that $p$ is odd) would be necessarily tame and thus with cyclic inertia, and therefore it should yield that $\sigma_p$ lifts to an element of order $4$, which we have just disproved.\footnote{Observe that in a $\mathbb{F}_2$-central extension of $G$, given by a $2$-cocycle $\theta$ with $\theta(\text{id}, \text{id})=0$, an involution $g$ of $G$ is either lifted to an involution or to an element of order $4$ depending respectively on whether $\theta(g,g)=0$ or $\theta(g,g) \neq 0$.} This ends the proof of Theorem \ref{Last thm}.

\begin{remark}
We say that an acceptable vector $(a_1, \dots, a_n)$ is quadratically consistent in case for each distinct $h, k \in [n]$ and every prime factor $p$ of $a_h$ one has that $a_k$ is a square modulo $p$. We remark that in case $(a_1, \dots, a_n)$ is quadratically consistent then, by the same calculation done in the proof of Theorem \ref{theoremC}, one has a rather convenient criterion to decide whether an element $(\Phi_1, \dots, \Phi_n) \in \text{Comm-Vect}_j(a_1, \dots, a_n)$ actually belongs to $\text{Comm-Vect}_j^{\circ}(a_1, \dots, a_n)$. Namely that happens if and only if every prime factor $p$ of $a_h$ splits completely in $L(\Phi_h)/\mathbb{Q}$ for every $h \in [n]$.
\end{remark}

\end{document}